\documentclass[11pt]{amsart}
\usepackage{a4wide,enumerate}
\allowdisplaybreaks

\let\pa\partial  
\let\na\nabla  
\let\eps\varepsilon  
\newcommand{\N}{{\mathbb N}}  
\newcommand{\R}{{\mathbb R}} 
\newcommand{\diver}{\operatorname{div}}  

\newcommand{\HH}{{\mathcal H}}

\newtheorem{theorem}{Theorem}   
\newtheorem{lemma}[theorem]{Lemma}   
   
\newtheorem{remark}[theorem]{Remark}

 
\begin{document}  

\title[The boundedness-by-entropy principle]{The boundedness-by-entropy principle \\
for cross-diffusion systems}

\author{Ansgar J\"ungel}
\address{Institute for Analysis and Scientific Computing, Vienna University of  
	Technology, Wiedner Hauptstra\ss e 8--10, 1040 Wien, Austria}
\email{juengel@tuwien.ac.at} 

\date{\today}

\thanks{The author acknowledges partial support from   
the Austrian Science Fund (FWF), grants P22108, P24304, and W1245, and    
the Austrian-French Program of the Austrian Exchange Service (\"OAD).
Part of this manuscript was written during the stay of the author at the
King Abdullah University of Science and Technology (KAUST) in Thuwal,
Saudi-Arabia. The author thanks Peter Markowich for his kind invitation and support} 

\begin{abstract}
A novel principle is presented which allows for the proof of bounded weak
solutions to a class of 
physically relevant, strongly coupled parabolic systems exhibiting
a formal gradient-flow structure. The main feature of these systems is that the
diffusion matrix may be generally neither symmetric nor positive semi-definite.
The key idea of the principle is to employ a transformation of variables, determined
by the entropy density, which is defined by the gradient-flow formulation.
The transformation yields at the same time a positive semi-definite diffusion matrix,
suitable gradient estimates as well as lower and/or
upper bounds of the solutions. These bounds are a consequence of the
transformation of variables and are obtained without the use of a maximum
principle. Several classes
of cross-diffusion systems are identified which can be solved by this technique. 
The systems are formally derived from continuous-time random walks on a lattice
modeling, for instance, the motion of ions, cells, or fluid particles.
\end{abstract}

\keywords{Strongly coupled parabolic systems, global-in-time existence,
bounded weak solutions, gradient flow,
entropy variables, entropy method, volume-filling effects, 
population dynamics models, non-equilibrium thermodynamics.}  
 
\subjclass[2000]{35K51, 35K65, 35Q92, 92D25.}  

\maketitle
\tableofcontents


\section{Introduction}\label{sec.intro}

Many applications in physics, chemistry, and biology can be modeled by
reaction-diffusion systems with cross diffusion,
which describe the temporal evolution of the densities or mass fractions
of a multicomponent system. Physically, we expect that the concentrations
are nonnegative or even bounded (examples are given in Section \ref{sec.ex}). 
Since generally no maximum principle
holds for parabolic systems, the proof of these bounds
is a challenging problem. A second difficulty arises from the fact that
in many applications, the diffusion matrix is neither symmetric nor
positive semi-definite. 

In this paper, we introduce a general technique which
allows us, under certain assumptions, to prove simultaneously the global existence
of a weak solution as well as its boundedness from below and/or above.
The key idea is to exploit the so-called entropy structure of the parabolic system,
which is assumed to exist,
leading at the same time to gradient estimates and lower/upper bounds.
More specifically, we consider reaction-diffusion systems of the form  
\begin{equation}\label{1.eq}
  \pa_t u - \diver(A(u)\na u) = f(u) \quad\mbox{in }\Omega,\ t>0,
\end{equation}
subject to the boundary and initial conditions
\begin{equation}\label{1.bic}
  (A(u)\na u)\cdot\nu=0\quad\mbox{on }\pa\Omega,\ t>0, \quad u(0)=u^0
	\quad\mbox{in }\Omega.
\end{equation}
Here, $u(t)=(u_1,\ldots,u_n)(\cdot,t)$ 
is a vector-valued function ($n\ge 1$), 
representing the densities or mass fractions $u_i$ of the components of the system, 
$A(u)=(a_{ij}(u))\in\R^{n\times n}$ is the diffusion matrix, and the reactions
are modeled by the components of the function $f:\R^n\to\R^n$.
Furthermore, $\Omega\subset\R^d$ ($d\ge 1$) is a bounded domain
with Lipschitz boundary and $\nu$ is the exterior unit normal vector to
$\pa\Omega$. The divergence $\diver(A(u)\na u)$ and the expression
$(A(u)\na u)\cdot\nu$ consist of the components 
$$
  \sum_{j=1}^d\sum_{k=1}^n\frac{\pa}{\pa x_j}\left(a_{ik}(u)\frac{\pa u_k}{\pa x_j}
	\right), \quad 
	\sum_{j=1}^d\sum_{k=1}^n a_{ik}(u)\frac{\pa u_k}{\pa x_j}\nu_j, \quad
	i=1,\ldots,n,
$$
respectively. From the applications, we expect that the solution $u$
has values in an open set $D\subset\R^n$. When $u$ models concentrations,
we expect that $D\subset(0,\infty)^n$ (positivity); when $u$ models mass
fractions, we expect that the values of each component $u_i$ are between
zero and one, i.e.\ $D\subset(0,1)^n$ (boundedness and positivity).

Before summarizing the state of the art of cross-diffusion systems and
detailing our main results, we explain the key idea of our principle
and provide an illustrating example.

\subsection{Idea of the principle}\label{sec.idea}

The main assumption in this paper is that system \eqref{1.eq}
possesses a formal gradient-flow structure, i.e., \eqref{1.eq} can be formulated as
$$
  \pa_t u - \diver\left(B\na \frac{\delta\HH}{\delta u}\right) = f(u),
$$
where $B$ is a positive semi-definite matrix and
$\delta\HH/\delta u$ is the variational derivative of the entropy
(or free energy) functional $\HH[u]=\int_\Omega h(u)dx$. 
The function $h:D\to[0,\infty)$ is called the
entropy density. Identifying $\delta\HH/\delta u$ with its Riesz
representative $Dh(u)$ (the derivative of $h$) and introducing the 
entropy variable $w=Dh(u)$, the above formulation can be understood as
\begin{equation}\label{1.eqw}
  \pa_t u - \diver(B(w)\na w) = f(u),
\end{equation}
where $B=B(w)=A(u)(D^2 h(u))^{-1}$ and $D^2h$ is the Hessian of $h$.
For transforming back from the $w$- to the $u$-variable, we need to assume that
$Dh:D\to\R^n$ is invertible such that $u=(Dh)^{-1}(w)$.

The gradient-flow formulation has two important consequences.
First, calculating the formal time derivative of $\HH$, \eqref{1.eqw} and
integrating by parts yields
\begin{equation}\label{1.dHdt}
  \frac{d\HH}{dt} = \int_\Omega \pa_t u\cdot Dh(u)dx
	= \int_\Omega \pa_t u\cdot wdx = -\int_\Omega \na w:B(w)\na w dx 
	+ \int_\Omega f(u)\cdot wdx,
\end{equation}
where $A:B=\sum_{i,j}a_{ij}b_{ij}$ for matrices $A=(a_{ij})$ and $B=(b_{ij})$.
Thus, if $f(u)\cdot w\le 0$ and since $B(w)$ is assumed to be positive semi-definite, 
$\HH$ is a Lyapunov functional. We refer to $\HH$ as an entropy and to the
integral of $\na w:B\na w$ as the corresponding entropy dissipation. 
Under certain conditions, it gives gradient 
estimates for $u$ needed to prove the global-in-time existence
of solutions to \eqref{1.bic} and \eqref{1.eqw}.

We remark that the positive semi-definiteness of $B(w)$ is in fact a consequence
of the existence of an entropy. It was shown in \cite{DGJ97,KaSh88} 
that both properties are equivalent and moreover, $B(w)$ may be even symmetric.

Second, supposing that there exists a weak solution $w$ to \eqref{1.eqw},
the invertibility of $Dh:D\to\R^n$ shows that the original 
variable $u=(Dh)^{-1}(w)$ satisfies $u(\cdot,t)\in D$ for $t>0$.
Thus, if $D$ is bounded, we automatically obtain $L^\infty$ bounds for $u$,
without using a maximum principle. If $D$ is only a cone, for instance
$D=(0,\infty)^n$, we conclude the positivity of $u(t)$.

We call the above technique the {\em boundedness-by-entropy principle} 
since it provides lower and/or upper bounds for the solutions to 
\eqref{1.eq}-\eqref{1.bic} by the use of the entropy density.
Summarizing, the principle is based on two main hypotheses:

\begin{description}
\item[\rm H1] There exists a function $h\in C^2(D;[0,\infty))$ whose derivative
is invertible on $\R^n$. This yields the bound $u(\cdot,t)\in D$.
\item[\rm H2] The matrix $D^2h(u)A(u)$ is positive semi-definite for all $u\in D$.
This condition is necessary to derive a priori estimates for $u$.
\end{description}

Note that the positive semi-definiteness of $D^2h(u)A(u)$ is equivalent
to that of $B(w)=A(u)(D^2 h(u))^{-1}$ since for all $z\in\R^n$,
$z^\top D^2h(u)A(u)z=(D^2h(u)z)^\top B(w)(D^2h(u)z)$. Hypothesis H2 avoids
the inversion of $D^2h(u)$.

In fact, we need a stronger hypothesis than H2 since it does not allow us
to infer gradient estimates. We need to suppose that $D^2h(u)A(u)$ is 
positive definite in such a way that we obtain
$L^2$ gradient estimates for $u_i^{m}$, where $m>0$ is some number.
Moreover, we need an estimate for the time derivative of $u_i$
which makes it necessary to impose some growth conditions on the coefficients
of $A(u)$. We explain these requirements with the help of the following example.

\subsection{An illustrative example}\label{sec.illust}

We consider a multicomponent fluid
consisting of three components with mass fractions $u_1$, $u_2$, and 
$1-u_1-u_2$ and equal molar masses under isobaric, isothermal conditions. 
The model equals \eqref{1.eq} with the diffusion matrix
\begin{equation}\label{1.ms}
  A(u) = \frac{1}{2+4u_1+u_2}\begin{pmatrix}
	1+2u_1 & u_1 \\ 2u_2 & 2+u_2
	\end{pmatrix}
\end{equation}
where we have chosen particular diffusivities to simplify the presentation
(see Section \ref{sec.ms}). 
Notice that the nonnegativity
of $u_1$ and $u_2$ can be proved easily by a maximum principle argument
but the proof of upper bounds is less clear.
The logarithmic entropy density 
\begin{equation}\label{1.log}
  h(u) = u_1(\log u_1-1)+u_2(\log u_2-1) + (1-u_1-u_2)(\log(1-u_1-u_2)-1)
\end{equation}
for $u=(u_1,u_2)\in D$, where
\begin{equation}\label{1.D}
  D=\{(u_1,u_2)\in(0,1)^2:u_1+u_2<1\}
\end{equation}
satisfies Hypothesis H1, since the
inverse transformation of variables gives
\begin{equation}\label{1.u}
  u = (Dh)^{-1}(w) = \left(\frac{e^{w_1}}{1+e^{w_1}+e^{w_2}},
	\frac{e^{w_2}}{1+e^{w_1}+e^{w_2}}\right)\in D,
\end{equation}
where $w=(w_1,w_2)\in\R^2$. Thus, once the existence of weak solutions $w$
to the transformed system \eqref{1.eqw} is shown, we conclude the bounds
$0<u_1,u_2<1$ {\em automatically} by transforming back to the original variable.
In particular, no maximum principle is used. In fact,
the inverse transformation even shows that $1-u_1-u_2>0$,
as required from the application. Furthermore, the matrix
$$
  D^2h(u)A(u) =\frac{1}{\delta(u)}	\begin{pmatrix}
	u_2(1-u_1-u_2)+3u_1u_2 & 3u_1u_2 \\ 3u_1u_2 & 2u_1(1-u_1-u_2)+3u_1u_2
	\end{pmatrix}
$$
with $\delta(u)=u_1u_2(1-u_1-u_2)(2+4u_1+u_2)$
is symmetric and positive definite, thus fulfilling Hypothesis H2.

However, the matrix $B=A(u)(D^2h(u))^{-1}$ degenerates at $u_1=0$ or $u_2=0$,
and we cannot expect to conclude gradient estimates for $w$ from \eqref{1.dHdt}.
Instead, it is more appropriate to derive these estimates for the original 
variable $u$. Indeed, calculating the time derivative of the entropy according to
\eqref{1.dHdt} and using $\na w=D^2h(u)\na u$, we find that
$$
  \frac{d\HH}{dt} + \int_\Omega\na u:D^2h(u)A(u)\na u dx = \int_\Omega f(u)\cdot wdx,
$$
and the entropy dissipation can be estimated according to
\begin{align*}
  \int_\Omega \na u:D^2h(u)A(u)\na u dx
	&= \int_\Omega\frac{1}{2+4u_1+u_2}\left(
	\frac{|\na u_1|^2}{u_1}+\frac{2|\na u_2|^2}{u_2}
	+ \frac{3|\na(u_1+u_2)|^2}{1-u_1-u_2}\right)dx \\
	&\ge \int_\Omega\big(2|\na\sqrt{u_1}|^2 + 4|\na\sqrt{u_2}|^2\big)dx.
\end{align*}
Thus, assuming that the integral involving the
reaction terms can be bounded uniformly in $u$,
we obtain $H^1(\Omega)$ estimates for $u_1^{1/2}$ and $u_2^{1/2}$.
Using the boundedness of $u_i$, a priori estimates for $\pa_t u_1$ and
$\pa_t u_2$ can be proven, taking into account the particular structure of $A(u)$.

This example shows that we need additional assumptions on the nonlinearities
of system \eqref{1.eq} in order to derive suitable a priori estimates,
detailed in Section \ref{sec.main}.

\begin{remark}[Notion of entropy]\label{rem.ent}\rm
There exists an intimate relation between the boundedness-by-entropy principle 
and non-equilibrium thermodynamics. In particular, the entropy variable $w=Dh(u)$ 
is related to the chemical potentials of a mixture of gases 
and the special transformation \eqref{1.u} is connected with a special choice of 
thermodynamic activities; see Appendix \ref{sec.thermo} for details.
The entropy density defined above equals the negative thermodynamic entropy.
Since the physical entropy is increasing and we wish to investigate
nonincreasing functionals, we have reversed the sign as usual in entropy methods. 
Moreover, the logarithmic entropy \eqref{1.log} is motivated by Boltzmann's 
entropy for kinetic equations. For these reasons, we refer to the
functional $\HH[u]=\int_\Omega h(u)dx$ as a (mathematical) entropy. 
We note that in some applications, free energy may be a more appropriate notion. 
\qed
\end{remark}

\subsection{State of the art}\label{sec.state}

We have already mentioned that the analysis of cross-diffusion
systems is delicate since standard tools like maximum principles and 
regularity results generally do not apply. For instance, there exist H\"older
continuous solutions to certain cross-diffusion systems which are not bounded,
and there exist bounded weak solutions which develop singularities in finite time
\cite{StJo95}. In view of these counterexamples, it is not surprising that 
additional conditions are required to prove that (local in time)
weak solutions are bounded and that they can be continued globally in time.

Ladyzenskaya et al.\ \cite[Chap.~VII]{LSU88} reduce the problem of finding
a priori estimates of local-in-time solutions $u$ to quasilinear parabolic systems
to the problem of deriving $L^\infty$ bounds for $u$ and
$\na u$. Under some growth conditions on the nonlinearities,
the global-in-time existence of classical solutions was shown.
A fundamental theory of strongly coupled systems was developed by Amann \cite{Ama89}.
He formulated the concept of $W^{1,p}$ weak solutions and their local existence 
and proved that the solutions exist globally if their $L^\infty$ and H\"older norms 
can be controlled. The above mentioned counterexamples show that the control on
both norms is necessary. Le and Nguyen \cite{LeNg06} proved that bounded weak 
solutions are H\"older continuous if certain structural assumptions on the
diffusion matrix are imposed.
The regularity of the solutions to systems with diagonal or full diffusion matrix
was investigated in, for instance, \cite{Ama93,DMS11,Lun95,Pru03}.

The boundedness of weak solutions to strongly coupled systems has been proved  
using various methods. Invariance principles were employed by K\"ufner \cite{Kue96}
and Redlinger \cite{Red89}, requiring severe restrictions on the initial data.
Truncated test functions, which are nonlinear in the solutions to $2\times 2$
systems, were suggested by Le \cite{Le06} who proved
the boundedness under rather restrictive structural assumptions.
In the work of Lepoutre et al.\ \cite{LPR12}, 
the existence of bounded solutions to strongly coupled systems with spatially
regularized arguments in the nonlinearities was shown using H\"older theory
for nondivergence parabolic operators.
Other methods are based on the derivation of $L^p$ bounds uniform in $p$ and
the passage to the limit $p\to\infty$ (Moser-type or Alikakos-type
iterations \cite{Ali79}). 

The idea of proving the boundedness of weak solutions using the entropy density
\eqref{1.log} was, to our best knowledge, first employed by Burger et al.\ 
\cite{BDPS10} in a size-exclusion model for two species
(see Section \ref{sec.burger}).
It was applied to a tumor-growth model in \cite{JuSt12} and extended to
Maxwell-Stefan systems for fluids with arbitrary many components \cite{JuSt14}.
A different entropy density was suggested in \cite{GGJ03} to prove $L^\infty$ 
bounds in one space dimension.
In fact, the idea of using entropy variables already appeared in the analysis 
of parabolic systems from non-equilibrium thermodynamics \cite{DGJ97},
originally used to remove the influence from the electric potential, and
goes back to the use of so-called Slotboom variables in semiconductor modeling
\cite[Section 3.2]{Mar86}.

In this paper, we identify the key elements of this idea and provide
a general global existence result for bounded weak solutions to certain systems. 
Furthermore, the technique is applied to a variety of cross-diffusion systems
derived from a random-walk master equation on a lattice, underlying the
strength and flexibility of the method. 

\subsection{Main results}\label{sec.main}

Our first main result concerns the existence of bounded weak solutions
to \eqref{1.eq}-\eqref{1.bic} under some general structural assumptions. 
Motivated by the comments in Sections \ref{sec.idea} and \ref{sec.illust}, 
we impose the following hypotheses:

\begin{description}
\item[\rm H1] There exists a convex function $h\in C^2(D;[0,\infty))$ ($D\subset\R^n$
open, $n\ge 1$) such that its derivative $Dh:D\to\R^n$ is invertible on $\R^n$.
\item[\rm H2'] Let $D\subset(a,b)^n$ for some $a$, $b\in\R$ with $a<b$ and let
$\alpha_i^*$, $m_i\ge 0$ ($i=1,\ldots,n$) be such that for all $z=(z_1,\ldots,z_n)^\top
\in\R^n$ and $u=(u_1,\ldots,u_n)\in D$,
$$
  z^\top D^2h(u)A(u)z \ge \sum_{i=1}^n\alpha_i(u_i)^2z_i^2,
$$
where either $\alpha_i(u_i)=\alpha_i^*(u_i-a)^{m_i-1}$ or 
$\alpha_i(u_i)=\alpha_i^*(b-u_i)^{m_i-1}$.
\item[\rm H2''] There exists $a^*>0$ such that for all $u\in D$ and $i,j=1,\ldots,n$
for which $m_j>1$, it holds that $|a_{ij}(u)|\le a^*|\alpha_j(u_j)|$.
\item[\rm H3] It holds $A\in C^0(D;\R^{n\times n})$ and there exists $C_f>0$ 
such that for all $u\in D$, $f(u)\cdot Dh(u)\le C_f(1+h(u))$.
\end{description}

Hypothesis H1 shows that the inverse transformation $u=(Dh)^{-1}(w)$ is well defined.
If $w(t)$ is a weak solution to \eqref{1.eqw}, we conclude that 
$u(t)=(Dh)^{-1}(w(t))\in D$, yielding the desired $L^\infty$ bounds on $u(t)$.
Assumption H2', which implies H2, is employed to prove a priori
estimates for $\na u_i^{m_i}$. Hypothesis H2'' is needed to show a bound on 
$\pa_t u_i$.
The condition on $f(u)$ in Hypothesis H3 is needed to derive an a priori
estimate for the solution. For instance, if the entropy density is given by 
\eqref{1.log}, we may choose $f_i(u)=u_i^{\alpha_i}(1-u_1-u_2)^{\beta_i}$
for $\alpha_i$, $\beta_i>0$ and $i=1,2$. It is related to the
quasi-positivity assumption $f_i(u)\ge 0$ for all $u\in(0,\infty)^2$ 
with $u_i=0$ \cite[Section 6]{Bot11}.
In contrast to the structural assumptions of \cite{Le06}, 
Hypotheses H1-H3 are easy to verify as soon as
an entropy density is found (often motivated from the application at hand). 
Under the above conditions, the following theorem holds.

\begin{theorem}[General global existence result]\label{thm.ex}
Let Hypotheses H1, H2', H2'', and H3 hold and let 
$u^0\in L^1(\Omega;\R^n)$ be such that $u^0(x)\in D$ for $x\in\Omega$. 
Then there exists a weak solution $u$ to 
\eqref{1.eq}-\eqref{1.bic} satisfying $u(x,t)\in \overline{D}$ for $x\in\Omega$,
$t>0$ and 
\begin{equation*}
  u\in L^2_{\rm loc}(0,\infty;H^1(\Omega;\R^n)), 
	\quad\pa_t u \in L^2_{\rm loc}(0,\infty;H^1(\Omega;\R^n)').
\end{equation*}
The initial datum is satisfied in the sense of $H^1(\Omega;\R^n)'$.
\end{theorem}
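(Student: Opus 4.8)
The plan is to construct the solution as the limit of a doubly regularized problem: an implicit Euler discretization in time and an elliptic regularization of high order, both formulated in the entropy variable $w=Dh(u)$. Fix $T>0$, choose $m\in\N$ with $m>d/2$ so that $H^m(\Omega)$ embeds compactly into $C^0(\overline\Omega)$, and let $\eps>0$, $\tau=T/N$ with $N\in\N$. Write $u(w):=(Dh)^{-1}(w)$, which by H1 maps $\R^n$ onto $D$, and $B(w):=A(u(w))(D^2h(u(w)))^{-1}$, and assume (after a routine approximation of $u^0$, which makes $\int_\Omega h(u^0)\,dx$ finite) that $u^0$ takes values in a compact subset of $D$. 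I would then solve recursively: given $w^{k-1}$ (with $u(w^0)=u^0$), find $w^k\in H^m(\Omega;\R^n)$ with
$$
  \frac1\tau\int_\Omega\big(u(w^k)-u(w^{k-1})\big)\cdot\phi\,dx
  + \int_\Omega\na\phi:B(w^k)\na w^k\,dx
  + \eps\int_\Omega\Big(\sum_{|\beta|=m}D^\beta w^k\cdot D^\beta\phi + w^k\cdot\phi\Big)dx
  = \int_\Omega f(u(w^k))\cdot\phi\,dx
$$
for all $\phi\in H^m(\Omega;\R^n)$. Solvability of this step follows from the Leray--Schauder fixed-point theorem applied to the map sending $\bar w\in L^\infty(\Omega;\R^n)$ to the solution of the linearization (with $B(\bar w)$, $u(\bar w)$, $f(u(\bar w))$ frozen and the right-hand side scaled by $\sigma\in[0,1]$): this linear problem is uniquely solvable by Lax--Milgram, since the $\eps$-term is coercive on $H^m$ and $B(\bar w)$ is positive semi-definite by H2; the map is continuous and compact because $H^m(\Omega)\hookrightarrow\hookrightarrow C^0(\overline\Omega)$ and $u(\cdot),B(\cdot),f(\cdot)$ are continuous; and the uniform bound on fixed points that the theorem requires is the discrete entropy estimate below.

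The entropy estimate comes from inserting $\phi=w^k=Dh(u(w^k))$ into the scheme. Convexity of $h$ gives the discrete chain rule $\big(u(w^k)-u(w^{k-1})\big)\cdot Dh(u(w^k))\ge h(u(w^k))-h(u(w^{k-1}))$; H3 bounds $f(u(w^k))\cdot w^k$ by $C_f(1+h(u(w^k)))$; and, with $u^k:=u(w^k)$, the dissipation satisfies $\na w^k:B(w^k)\na w^k=\na u^k:D^2h(u^k)A(u^k)\na u^k\ge\sum_i\alpha_i(u_i^k)^2|\na u_i^k|^2$ by H2'. Summing over $k$ and applying a discrete Gronwall inequality yields, with a constant $C$ independent of $\tau$ and $\eps$,
$$
  \max_k\int_\Omega h(u^k)\,dx + \tau\sum_k\Big(\sum_{i=1}^n\big\|\na\big((u_i^k)^{m_i}\big)\big\|_{L^2(\Omega)}^2 + \eps\,\|w^k\|_{H^m(\Omega)}^2\Big)\le C,
$$
where $(u_i)^{m_i}$ is interpreted as $\log(u_i-a)$ when $m_i=0$. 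As $D\subset(a,b)^n$ is bounded, the piecewise-constant-in-time interpolants $u^{(\tau,\eps)}$ take values in $\overline D$ and are bounded in $L^\infty(0,T;L^\infty(\Omega))$, while $(u_i^{(\tau,\eps)})^{m_i}$ is bounded in $L^2(0,T;H^1(\Omega))$. Testing the scheme with $\phi\in H^m(\Omega;\R^n)$ and using H2'' to write the fluxes $a_{ij}(u^k)\na u_j^k$ with $m_j>1$ as bounded multiples of $\na\big((u_j^k)^{m_j}\big)$ (and noting that $\alpha_j$ is bounded below when $m_j\le1$), I would obtain, uniformly in $\tau$ and $\eps$, bounds for the discrete time derivative of $u^{(\tau,\eps)}$ in $L^2(0,T;H^m(\Omega;\R^n)')$ and for $A(u^{(\tau,\eps)})\na u^{(\tau,\eps)}$ in $L^2(\Omega\times(0,T))$.

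The limit is taken in two stages. First let $\tau\to0$ with $\eps$ fixed: an Aubin--Lions lemma for piecewise-constant functions gives $w^{(\tau)}\to w_\eps$ strongly in $L^2(0,T;H^{m-1}(\Omega))$ and a.e., hence $u^{(\tau)}=u(w^{(\tau)})\to u_\eps=u(w_\eps)$ a.e.\ and, by boundedness, in every $L^p(\Omega\times(0,T))$; combined with weak $L^2$ convergence of the gradients and fluxes and a.e.\ convergence of the (bounded) coefficients $B(w^{(\tau)})$, $a_{ij}(u^{(\tau)})$, $f(u^{(\tau)})$, the weak limits are identified and $u_\eps$ is seen to solve the $\eps$-regularized continuous-in-time problem. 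Then let $\eps\to0$: the estimates are uniform in $\eps$, so the regularizing term vanishes (as $\eps\|w_\eps\|_{L^2(0,T;H^m)}^2$ stays bounded), and a nonlinear Aubin--Lions-type compactness argument --- using $u_\eps\in L^\infty$, $\na\big((u_{\eps,i})^{m_i}\big)\in L^2(\Omega\times(0,T))$, and $\pa_t u_\eps\in L^2(0,T;H^m(\Omega;\R^n)')$ --- gives $u_\eps\to u$ a.e.\ in $\Omega\times(0,T)$. Passing to the limit in the weak formulation (each nonlinearity by weak--strong convergence, the degenerate fluxes again via H2'') shows that $u$ is a weak solution of \eqref{1.eq}--\eqref{1.bic}; the $L^2$ bound on the flux upgrades $\pa_t u$ to $L^2_{\rm loc}(0,\infty;H^1(\Omega;\R^n)')$, $\na u_i\in L^2$ follows from $\na\big(u_i^{m_i}\big)\in L^2$ and $u_i\in L^\infty$, and since $\pa_t u\in L^2(0,T;H^1(\Omega;\R^n)')$ with the approximate initial data converging to $u^0$, the initial condition holds in $H^1(\Omega;\R^n)'$; the bound $u(x,t)\in\overline D$ is inherited from the a.e.\ convergence.

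I expect the main obstacle to be the degeneracy of $B(w)$: since $\na w:B(w)\na w$ does not control $\na w$, all compactness of $w$ (hence of $u$) at fixed $\eps$ must rest on the high-order regularization, while in the limit $\eps\to0$ only the partial information $\na(u_i^{m_i})\in L^2$ survives, so $u$ has to be recovered by a nonlinear compactness lemma and the limits of the nonlinear, possibly degenerate, flux terms must be identified using Hypothesis H2''. The two structural facts that make the scheme run are the convexity of $h$, which turns each implicit time step into an entropy-dissipating one, and Hypothesis H2' stated directly for $D^2h(u)A(u)$, which supplies the only available a priori gradient bound.
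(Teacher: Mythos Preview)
Your scheme, the Leray--Schauder/Lax--Milgram existence step, the discrete entropy estimate via convexity of $h$ together with H2' and H3, and the time-derivative bound via H2'' all match the paper's proof. The one genuine gap is in your two-stage limit. In the first stage ($\tau\to0$ at fixed $\eps$) you invoke Aubin--Lions for $w^{(\tau)}$, but the scheme only produces a discrete time-derivative bound for $u^{(\tau)}=u(w^{(\tau)})$, not for $w^{(\tau)}$; since $(Dh)^{-1}$ is nonlinear and $D^2h$ may blow up at $\partial D$, there is no way to transfer this bound from $u$ to $w$. The paper avoids the issue by taking the \emph{simultaneous} limit $(\tau,\eps)\to0$: compactness is obtained directly for $u^{(\tau)}$ via the (nonlinear, when $m_i>1$) discrete Aubin lemma, the flux $A(u^{(\tau)})\na u^{(\tau)}$ is identified in the limit using H2'' exactly as you describe, and the regularizing term disappears simply because $\eps w^{(\tau)}\to 0$ in $L^2(0,T;H^m)$---no strong convergence of $w$ is ever required. (The paper then remarks that performing $\eps\to0$ first and $\tau\to0$ afterwards yields the improved regularity $\pa_t u\in L^2(0,T;H^1(\Omega)')$.)

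Your order of limits could be repaired by applying the nonlinear Aubin lemma to $u^{(\tau)}$ already at fixed $\eps$ and retaining only weak $L^2(0,T;H^m)$ convergence of $w^{(\tau)}$; but then, to pass to the limit in the $\eps$-term, you must identify the weak limit of $w^{(\tau)}$ with $Dh(u_\eps)$, which needs $u_\eps(x,t)\in D$ a.e.---not obvious, since a priori $u_\eps\in\overline D$ and $Dh$ is undefined on $\partial D$. The simultaneous limit is cleaner precisely because the $\eps$-term vanishes before any such identification is needed. A minor further point: your claim that $\na u_i\in L^2$ follows from $\na(u_i^{m_i})\in L^2$ and $u_i\in L^\infty$ is only valid for $m_i\le1$; for $m_i>1$ the factor $(u_i-a)^{1-m_i}$ is unbounded, and the flux must be interpreted as $(a_{ij}(u)/\alpha_j(u_j))\,\na\widetilde\alpha_j(u_j)$, which is how the paper writes and passes to the limit in it.
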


Note that since $D$ is bounded by Hypothesis H2', the theorem
yields global $L^\infty$ bounds on $u$. We remark that we may also 
assume the slightly weaker condition $u^0(x)\in\overline{D}$. 
Indeed, we may approximate $u^0$ by $u^0_\eta(x)\in D$ satisfying $u^0_\eta\to u^0$
a.e., apply the theorem to $u^0_\eta$, and perform the limit $\eta\to 0$. 
We refer to \cite{ChJu13} for details.

The assumptions of the theorem are satisfied for the tumor-growth model and
the Maxwell-Stefan equations -- detailed in Section \ref{sec.vf} --,
including the example of Section \ref{sec.illust}. Furthermore, the 
diffusion matrix
\begin{equation}\label{1.ks}
  A(u) = \begin{pmatrix} 1-u_1 & -u_1 \\ -u_2 & 1-u_2 \end{pmatrix}
\end{equation}
with entropy density \eqref{1.log} and domain \eqref{1.D} also satisfies
Hypotheses H1, H2', and H2''.
The model with this diffusion matrix describes the aggregation of two 
population species with cross-diffusion terms related to the drift term 
of the Keller-Segel system.

For the proof of Theorem \ref{thm.ex}, we first semi-discretize \eqref{1.eqw}
in time with step size $\tau>0$
and regularize this equation by the expression
$\eps(\sum_{|\alpha|=m}(-1)^{m}D^{2\alpha}w+w)$, where $\eps>0$, $D^{2\alpha}$ is
a partial derivative of order $2|\alpha|$, and $m\in\N$ is such that
$H^m(\Omega)\hookrightarrow L^\infty(\Omega)$. This regularization ensures that the
approximate solution $w^{(\tau)}$ is bounded. For the proof of approximate solutions,
we only need Hypotheses H1, H2, and H3. The discrete version of the entropy
identity \eqref{1.dHdt} and Hypothesis H2' yield gradient estimates 
for $u^{(\tau)}=(Dh)^{-1}(w^{(\tau)})$.
Furthermore, uniform estimates on the discrete time derivative of $u^{(\tau)}$
can be shown with the help of Hypothesis H2''. Then, by the discrete
Aubin lemma from \cite{DrJu12}, 
the limit $(\tau,\eps)\to 0$ can be performed yielding the
existence of a weak solution to \eqref{1.eq}-\eqref{1.bic}.

Assumptions H2' and H2'' may be too restrictive in certain applications.
For instance, it may be impossible to find a bounded set $D\subset\R^n$
satisfying H1 or the inequality in H2' is not satisfied
(see the examples in Section \ref{sec.ex}). 
However, we show that our technique can be
adapted to situations in which {\em variants} of Hypotheses H2' and H2'' hold.
To illustrate this idea, we choose a class of cross-diffusion systems modeling the
time evolution of two population species. These models are derived
from a random-walk master equation in the diffusive limit (see Appendix
\ref{sec.deriv}). They generalize several models from the literature
(see Section \ref{sec.ex}). 

We consider two situations. In the first case, we assume that 
volume limitations lead to a limitation of the population densities
(volume-filling case). Then the
densities are nonnegative and bounded by a threshold value which is normalized
to one. This situation occurs, for instance, in the volume-filling Keller-Segel model
\cite{HiPa02} and in ion-channel modeling \cite{GNE02}. 
The diffusion matrix in \eqref{1.eq} reads as (see Appendix \ref{sec.deriv})
\begin{equation}\label{1.Aq}
  A(u) = \begin{pmatrix}
	q(u_3)+u_1 q'(u_3) & u_1q'(u_3) \\
	\beta u_2q'(u_3) & \beta(q(u_3)+u_2q'(u_3))
	\end{pmatrix},
\end{equation}
where $u_1$ and $u_2$ are the densities of the species with bounded total
density, $u_1+u_2\le 1$, and $u_3=1-u_1-u_2$. The function $q$ is related to
the transition probability of a species to move from one cell to a neighboring
cell, and $\beta>0$ is the ratio between the transition rates of both
species. Biologically, $q$ vanishes when the cells are fully packed, i.e.\
if $u_1+u_2=1$, so $q(0)=0$ and $q$ is nondecreasing. 
When only one species is considered, the diffusion equation
corresponds to the equation for the cell density in the volume-filling 
chemotaxis model \cite{Wrz10}.
The special case $q(u_3)=u_3$ was analyzed by Burger et al.\ \cite{BDPS10}. 
We are able to
prove the global existence of bounded weak solutions for $q(u_3)=u_3^s$ for any 
$s\ge 1$ but also more general functions are allowed
(see Theorems \ref{thm.exq1} and \ref{thm.exq2}).

\begin{theorem}[Volume-filling case]\label{thm.exq}
Let $s\ge 1$, $\beta>0$, and $q(y)=y^s$ for $y\ge 0$.
Furthermore, let $u^0=(u_1^0,u_2^0)\in L^1(\Omega;\R^2)$ with 
$u^0_1,u_2^0\ge 0$, $u_1^0+u_2^0\le 1$ in $\Omega$. 
Then there exists a bounded weak solution
$u=(u_1,u_2)$ to \eqref{1.eq}-\eqref{1.bic} with diffusion matrix \eqref{1.Aq}
and $f=0$ satisfying $0\le u_1,u_2\le 1$ and $u_3:=1-u_1-u_2\ge 0$ in $\Omega$, $t>0$, 
$$
  u_i q(u_3)^{1/2},\ q(u_3)^{1/2}\in L^2_{\rm loc}(0,\infty;H^1(\Omega;\R^2)), 
	\quad\pa_t u_i\in L^2_{\rm loc}(0,\infty;H^1(\Omega;\R^2)')
$$
for $i=1,2$, and for all $T>0$ and $\phi=(\phi_1,\phi_2)\in L^2(0,T;H^1(\Omega))^2$,
\begin{align}
  \int_0^T \langle \pa_t u,\phi\rangle dt 
	&+ \sum_{i=1}^2 \beta_i\int_0^T\int_\Omega\big(q(u_3)^{1/2}\na(q(u_3)^{1/2}u_i)
	\nonumber \\
	&{}- 3q(u_3)^{1/2}u_i\na(q(u_3)^{1/2})\big)\cdot\na\phi_i dxdt 
	= 0, \label{1.qw}
\end{align}
where $\beta_1=1$, $\beta_2=\beta$ and $\langle\cdot,\cdot\rangle$ is the
dual product between $H^1(\Omega;\R^2)'$ and $H^1(\Omega;\R^2)$.
The initial datum is satisfied in the sense of $H^1(\Omega;\R^2)'$.
\end{theorem}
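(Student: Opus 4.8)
The plan is to follow the scheme of Theorem~\ref{thm.ex} — time semi-discretization, elliptic regularization, discrete entropy estimates, compactness — but since the diffusion matrix \eqref{1.Aq} degenerates at $u_3=0$, Hypothesis~H2$'$ in its stated form fails, and I replace it by a \emph{degenerate} gradient estimate tailored to the quantities $q(u_3)^{1/2}u_i$ and $q(u_3)^{1/2}$ that appear in \eqref{1.qw}.

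First I would take the logarithmic entropy density \eqref{1.log} on the domain $D$ in \eqref{1.D}, which satisfies Hypothesis~H1 through the explicit inverse \eqref{1.u}; this is what will ultimately produce the bounds $0\le u_1,u_2\le1$ and $u_3\ge0$ without a maximum principle. The key algebraic observation is that, writing $v:=q(u_3)^{1/2}=u_3^{s/2}$ and $\beta_1=1$, $\beta_2=\beta$, the diffusion flux factorizes as
\[
  (A(u)\na u)_i = \beta_i\big(q(u_3)\na u_i - u_i\na q(u_3)\big)
  = \beta_i\big(v\,\na(v u_i) - 3 v u_i\,\na v\big),
\]
which both identifies \eqref{1.qw} as the natural weak formulation and shows that the flux lies in $L^2$ as soon as $v u_i$ and $v$ lie in $L^2_{\rm loc}(H^1)$ (recall $0\le v,\,v u_i\le1$ once the $L^\infty$ bounds are known). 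Next I would compute the entropy production: using $w_i=\log u_i-\log u_3$ and $\na u_1+\na u_2+\na u_3=0$, one finds for $\beta=1$ (the general case being analogous after a slightly more careful grouping)
\[
  \int_\Omega \na u:D^2h(u)A(u)\na u\,dx
  = \int_\Omega\Big(\frac{u_3^s}{u_1}|\na u_1|^2+\frac{u_3^s}{u_2}|\na u_2|^2+(s+u_3)u_3^{s-2}|\na u_3|^2\Big)dx,
\]
which is nonnegative (so Hypothesis~H2 holds) and, since $u_i\le1$ and $s\ge1$, controls $\int_\Omega\big(|\na(v u_1)|^2+|\na(v u_2)|^2+|\na v|^2\big)dx$ via Young's inequality. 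This is the variant of H2$'$ that drives the a priori estimates, and the factorized flux above plays the role of the variant of H2$''$.

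With these structural facts in hand I would run the approximation exactly as indicated after Theorem~\ref{thm.ex}: semi-discretize \eqref{1.eqw} in time with step $\tau$, add the regularization $\eps(\sum_{|\alpha|=m}(-1)^m D^{2\alpha}w+w)$ with $H^m(\Omega)\hookrightarrow L^\infty(\Omega)$, and solve the regularized elliptic problem for a bounded $w^{(\tau,\eps)}$ by a fixed-point argument (only H1, H2, and the trivial H3 with $f=0$ are needed here), setting $u^{(\tau,\eps)}=(Dh)^{-1}(w^{(\tau,\eps)})\in D$. The discrete entropy inequality and the production bound above give, uniformly in $(\tau,\eps)$, $L^2_{\rm loc}(H^1)$ bounds for $v^{(\tau,\eps)}:=q(u_3^{(\tau,\eps)})^{1/2}$ and $v^{(\tau,\eps)}u_i^{(\tau,\eps)}$ (plus $\sqrt\eps$-small terms), hence $L^2_{\rm loc}(L^2)$ bounds on the fluxes by the factorization, hence a uniform bound on the discrete time derivative of $u_i^{(\tau,\eps)}$ in $L^2_{\rm loc}(H^1(\Omega)')$. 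The discrete Aubin lemma of \cite{DrJu12} then yields a subsequence with $u^{(\tau,\eps)}\to u$ strongly in $L^2_{\rm loc}(L^2)$ and a.e.; since $y\mapsto y^{s/2}$ is continuous, $v^{(\tau,\eps)}\to q(u_3)^{1/2}$ and $v^{(\tau,\eps)}u_i^{(\tau,\eps)}\to q(u_3)^{1/2}u_i$ strongly in every $L^p_{\rm loc}$, while $\na v^{(\tau,\eps)}$ and $\na(v^{(\tau,\eps)}u_i^{(\tau,\eps)})$ converge weakly in $L^2_{\rm loc}$; by weak–strong convergence the nonlinear flux terms $v\na(vu_i)$ and $vu_i\na v$ pass to the limit. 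Sending $(\tau,\eps)\to0$ kills the regularizing terms and produces \eqref{1.qw}; the stated regularity and the attainment of $u^0$ (first approximating $u^0\in\overline D$ by data in $D$) in $H^1(\Omega;\R^2)'$ follow from the uniform estimates.

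The main obstacle is the degeneracy of the diffusion at $u_3=0$: it forces one to abandon the uniform-parabolicity picture of H2$'$, to read the entropy production as a degenerate gradient estimate on $q(u_3)^{1/2}u_i$ and $q(u_3)^{1/2}$, and — most delicately — to secure enough strong compactness of $u$ (through the time-derivative estimate, itself resting on the factorized form of the flux) to identify the weak limits of the nonlinear fluxes $v\na(vu_i)$ and $vu_i\na v$. The extra terms $u_iq'(u_3)$ in \eqref{1.Aq} and the coupling through $\beta\ne1$ demand care in checking positive semi-definiteness and the Young-type absorption, but introduce no new conceptual difficulty.
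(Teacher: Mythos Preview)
Your structural picture is right and your algebra is clean: the flux factorization $(A(u)\na u)_i=\beta_i\big(v\na(vu_i)-3vu_i\na v\big)$ is exactly what drives the weak formulation \eqref{1.qw}, and your entropy-production identity is correct (you use the entropy \eqref{1.log} rather than the paper's \eqref{1.hq}; for $q(y)=y^s$ both yield the same degenerate control of $u_3^s|\na u_i|^2/u_i$ and of $|\na u_3^{s/2}|^2$, so this difference is harmless).

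The genuine gap is the compactness step. You write that the discrete Aubin lemma of \cite{DrJu12} gives $u^{(\tau,\eps)}\to u$ strongly in $L^2_{\rm loc}(L^2)$, and then deduce strong convergence of $v^{(\tau,\eps)}u_i^{(\tau,\eps)}$ by continuity. But Aubin requires a \emph{spatial} gradient bound on the same quantity whose discrete time derivative you control, and your entropy estimate does \emph{not} provide an $L^2(H^1)$ bound on $u_1$ or $u_2$; it only controls $v=u_3^{s/2}$ and $vu_i$ in $L^2(H^1)$. For $1\le s\le 2$ one can extract $u_3\in L^2(H^1)$ from $(s+u_3)u_3^{s-2}|\na u_3|^2\ge|\na u_3|^2$, so Aubin yields strong convergence of $u_3$ (hence of $v$) --- but $u_1,u_2$ individually are still only weakly convergent, and this is not enough to pass to the limit in the term $vu_i\,\na v$, which needs $vu_i$ \emph{strongly}. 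For $s>2$ the situation is worse: $u_3^{s-2}|\na u_3|^2$ no longer dominates $|\na u_3|^2$, and one must go through either a fractional-Sobolev argument (Chavent--Jaffre) or the nonlinear Aubin lemma of \cite{CJL13} to get even $u_3$ compact.

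The paper fills this gap in two ways you did not anticipate. First, the limit is taken in \emph{two separate steps}: $\eps\to 0$ at fixed $\tau$ (where compactness is purely spatial and follows from the $H^1$ bounds), and only then $\tau\to 0$. The reason is that after $\eps\to 0$ the discrete time derivative lies in $L^2(H^1(\Omega)')$ rather than $L^2(H^m(\Omega)')$, which is needed for the second tool. Second, the strong convergence of the product $vu_i$ is obtained not from strong convergence of $u_i$ (which is never established) but from a tailor-made Aubin-type lemma (Lemma~\ref{lem.comp} in Appendix~\ref{sec.aubin}): if $y_\tau$ is relatively compact in $L^2(L^2)$, $z_\tau$ is bounded in $L^\infty$ with $\tau^{-1}(\pi_\tau z_\tau-z_\tau)$ bounded in $L^2(H^{-1})$, and both $y_\tau$ and $y_\tau z_\tau$ are bounded in $L^2(H^1)$, then $y_\tau z_\tau$ is relatively compact in $L^2(L^2)$. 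Applying this with $y_\tau=q(u_3^{(\tau)})^{1/2}$ and $z_\tau=u_i^{(\tau)}$ is precisely what closes the argument. Your proposal would be complete once you replace the unjustified ``Aubin gives $u$ strongly'' by this product-compactness lemma (and, for $s>2$, add the extra step producing compactness of $u_3$).
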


We may also consider reaction terms $f(u)$ satisfying a particular structure;
see Remark \ref{rem.f}.
The key idea of the proof is to introduce the entropy density
\begin{equation}\label{1.hq}
  h(u) = u_1(\log u_1-1) + u_2(\log u_2-1) + \int_c^{u_3}\log q(y)dy,
\end{equation}
where $u=(u_1,u_2)\in D=\{(u_1,u_2)\in(0,1)^2:u_1+u_2<1\}$ and $0<c<1$. 
A computation shows that 
Hypotheses H1 and H2 are satisfied but not H2'. Indeed, we will prove 
in Section \ref{sec.thm2} (see \eqref{se.edi}) that
$$
  \na u^\top D^2h(u)A(u)\na u \ge \frac{q(u_3)}{u_1}|\na u_1|^2
	+ \frac{q(u_3)}{u_2}|\na u_2|^2,
$$
and the factors degenerate at $u_1+u_2=1$ since $q(0)=0$.
From this, we are able to conclude a gradient estimate for $u_3$ but not
for $u_1$ or $u_2$. However, this is sufficient to infer the existence
of solutions, employing an extension of Aubin's compactness lemma
(see Appendix \ref{sec.aubin}). 

In the second case, volume-filling effects are not taken into account.
Then the diffusion matrix reads as (see Appendix \ref{sec.deriv})
\begin{equation}\label{1.Ap}
  A(u) = \begin{pmatrix}
	p_1(u)+u_1\pa_1p_1(u) & u_1\pa_2 p_1(u) \\
	u_2\pa_1 p_2(u) & p_2(u)+u_2\pa_2p_2(u)
	\end{pmatrix},
\end{equation}
where $p_1$ and $p_2$ are related to the transition probabilities of the
two species and $\pa_ip_j=\pa p_j/\pa u_i$. 
Note that each row of $A(u)$ is the gradient of a function such that
$$
  \diver(A(u)\na u)_i = \Delta(u_ip_i(u)), \quad i=1,2,
$$
which allows for additional $L^2$ estimates for $u_1$ and $u_2$,
using the duality estimates of Pierre and Schmitt \cite{PiSc97}.
This observation was exploited in \cite{DLM13}.

When the transistion probabilities depend linearly on the densities,
$p_i(u)=\alpha_{i0}+\alpha_{i1}u_1+\alpha_{i2}u_2$ ($i=1,2$), we obtain the well-known
population model of Shigesada, Kawashima, and Teramoto \cite{SKT79} with the
diffusion matrix
\begin{equation}\label{1.skt}
  A(u) = \begin{pmatrix}
	\alpha_{01} + 2\alpha_{11}u_1 + \alpha_{12}u_2 & \alpha_{12}u_1 \\
	\alpha_{21}u_2 & \alpha_{20} + \alpha_{21}u_1 + 2\alpha_{22}u_2
  \end{pmatrix}.
\end{equation}
The maximum principle implies that $u_1$ and $u_2$ are nonnegative.
Less results are known concerning upper bounds. In fact, in one space dimension 
and with coefficients $\alpha_{10}=\alpha_{20}$, Shim \cite{Shi02}
proved uniform upper bounds. Moreover, if cross-diffusion is weaker than
self-diffusion (i.e.\ $\alpha_{12}<\alpha_{22}$, $\alpha_{21}<\alpha_{11}$), 
weak solutions are bounded
and H\"older continuous \cite{Le06}. Upper bounds without any restrictions
are not known so far (at least to our best knowledge).

This model has received a lot of attention in the mathematical literature. 
One of the first existence results is due to Kim \cite{Kim84} who neglected
self-diffusion ($\alpha_{11}=\alpha_{22}=0$) and assumed equal coefficients 
($\alpha_{ij}=1$). The tridiagonal case
$\alpha_{21}=0$ was investigated by Amann \cite{Ama89}, Le \cite{Le02}, and
more recently, by Desvillettes and Trescaces \cite{DeTr13}.
Yagi \cite{Yag93} proved an existence theorem under the assumption
that the diffusion matrix is positive definite ($\alpha_{12}<8\alpha_{11}$,
$\alpha_{21}<8\alpha_{22}$, $\alpha_{12}=\alpha_{21}$). 
The first global existence result without any restriction on the diffusion
coefficients (except positivity) was achieved in \cite{GGJ03} in one space
dimension and in \cite{ChJu04,ChJu06} in several space dimensions.
The case of concave functions $p_1$ and $p_2$, for instance,
\begin{equation}\label{1.pp}
  p_i(u) = \alpha_{i0} + a_{i1}(u_1) + a_{i2}(u_2), \quad\mbox{where }
	a_{i1}(u_1)=\alpha_{i1}u_1^s, \ a_{i2}(u_2)=\alpha_{i2}u_2^s,
\end{equation}
and $i=1,2$, $u=(u_1,u_2)$, $0<s<1$, 
was analyzed by Desvillettes et al.\ \cite{DLM13}. We are
able to generalize this result to the case $1<s<4$ but 
we need to restrict the size of the cross-diffusion coefficients $\alpha_{12}$
and $\alpha_{21}$. More general functions $p_1$ and $p_2$ are possible;
see Section \ref{sec.thm3}. 

\begin{theorem}[No volume-filling case]\label{thm.exp}
Let \eqref{1.pp} hold with $1<s<4$ and $(1-1/s)\alpha_{12}\alpha_{21}
\le\alpha_{11}\alpha_{22}$. 
Furthermore, let $u^0=(u_1^0,u_2^0)\in L^1(\Omega;\R^2)$ with 
$u^0_1,u_2^0\ge 0$ in $\Omega$ and $\int_\Omega h(u^0)dx<\infty$, where 
$h$ is defined in \eqref{1.hp} below. Then there exists a weak solution
$u=(u_1,u_2)$ to \eqref{1.eq}-\eqref{1.bic} with diffusion matrix \eqref{1.Ap}
and $f=0$ satisfying $u_i\ge 0$ in $\Omega$, $t>0$,
\begin{align*}
  & u_i^{s/2},\ u_i^s\in L^2_{\rm loc}(0,\infty;H^1(\Omega;\R^2)), \quad
	u_i\in L^\infty_{\rm loc}(0,\infty;L^s(\Omega;\R^2)), \\
	& \pa_t u_i\in L^1_{\rm loc}(0,\infty;X'), \quad i=1,2,
\end{align*}
where $X=\{\psi\in W^{m,\infty}(\Omega):\na\psi\cdot\nu=0$ on $\pa\Omega\}$, $m>d/2$,
for all $\phi=(\phi_1,\phi_2)\in L^\infty(0,T;X)^2$, 
$$
  \int_0^T\langle\pa_t u,\phi\rangle dt
	+ \sum_{i=1}^2\int_0^T\int_\Omega u_ip_i(u)\Delta\phi dxdt 
	= 0,
$$
and $u(0)=u^0$ in the sense of $X'$.
\end{theorem}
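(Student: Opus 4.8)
The plan is to run the same scheme as for Theorem~\ref{thm.ex} --- time semi-discretization together with a higher-order elliptic regularization in the entropy variables $w=Dh(u)$ --- but, since $D=(0,\infty)^2$ is unbounded so that Hypothesis~H2'' has no chance to hold, to replace the missing $L^\infty$ information by the duality structure hidden in the identity $\diver(A(u)\na u)_i=\Delta(u_ip_i(u))$, extending the approach of \cite{DLM13} from $0<s<1$ to $1<s<4$. First I would check that the entropy density $h$ from \eqref{1.hp} satisfies H1 and H2 (a direct computation), fix $\tau,\eps>0$ and $m\in\N$ with $m>d/2$, and for given $u^{k-1}$ solve the regularized time-discrete equation
$$
  \frac1\tau(u^k-u^{k-1})-\diver\big(B(w^k)\na w^k\big)
  +\eps\Big(\sum_{|\alpha|=m}(-1)^mD^{2\alpha}w^k+w^k\Big)=0,\quad u^k=(Dh)^{-1}(w^k),
$$
weakly in $H^m(\Omega)$ by the fixed-point argument used for Theorem~\ref{thm.ex} (only H1, H2 and the trivial H3 with $f=0$ enter here). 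The regularization forces $w^k\in H^m(\Omega)\hookrightarrow L^\infty(\Omega)$, hence $u^k=(Dh)^{-1}(w^k)\in D=(0,\infty)^2$, so the positivity of the approximation is built in.

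The second --- and main --- step is the set of a priori bounds uniform in $(\tau,\eps)$. Testing the discrete equation with $w^k=Dh(u^k)$ and summing over $k$ gives the discrete version of \eqref{1.dHdt}, and bounding the entropy production $\na u^\top D^2h(u)A(u)\na u$ from below is where the restriction $(1-1/s)\alpha_{12}\alpha_{21}\le\alpha_{11}\alpha_{22}$ is consumed, through a Young inequality absorbing the cross-diffusion contributions into the self-diffusion ones; this yields $u_i^{s/2}$ bounded in $L^2(0,T;H^1(\Omega))$ and $u_i$ bounded in $L^\infty(0,T;L^s(\Omega))$. This alone does not yet make sense of $u_ip_i(u)$, so I would next exploit the divergence form: summing the two equations gives $\pa_t(u_1+u_2)=\Delta M$ on $Q_T:=\Omega\times(0,T)$ with $M:=u_1p_1(u)+u_2p_2(u)\ge\underline\alpha(u_1+u_2)$, $\underline\alpha:=\min(\alpha_{10},\alpha_{20})>0$, and the duality estimate of Pierre and Schmitt \cite{PiSc97} (in the discrete form, which needs only this lower bound) bounds $u_1+u_2$, hence $M$ and each $u_i$, in $L^2(Q_T)$ uniformly in $(\tau,\eps)$. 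A Gagliardo--Nirenberg bootstrap interpolating this $L^2(Q_T)$ bound with the entropy gradient bound then upgrades the integrability of $u_i$; this iteration closes precisely for $1<s<4$ and produces in addition $u_i^s\in L^2(0,T;H^1(\Omega))$ and $u_ip_i(u)\in L^1(Q_T)$. From the latter and the pairing $\langle\pa_tu_i,\phi\rangle=\int_\Omega u_ip_i(u)\Delta\phi\,dx$ with $\phi\in X$, which is bounded by $\|u_ip_i(u)\|_{L^1(\Omega)}\|\phi\|_X$, one obtains $\pa_tu_i$ bounded in $L^1(0,T;X')$.

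The third step is compactness and the passage $(\tau,\eps)\to0$. With $u_i^{s/2}$ bounded in $L^2(0,T;H^1(\Omega))$ and $\pa_tu_i$ bounded in $L^1(0,T;X')$, the nonlinear Aubin--Lions lemma of Appendix~\ref{sec.aubin} (in its discrete-in-time form, cf.\ \cite{DrJu12}) yields, along a subsequence, $u_i^{(\tau)}\to u_i$ strongly in $L^{s+1-\delta}(Q_T)$ and a.e.\ in $Q_T$ for every $\delta>0$. Then $u_i\ge0$ and the claimed regularity pass to the limit; $u_i^{(\tau)}p_i(u^{(\tau)})\to u_ip_i(u)$ in $L^1(Q_T)$ by the a.e.\ convergence together with the equi-integrability coming from the uniform $L^{s+1}(Q_T)$ bound; the regularization terms vanish since $\eps\|w^{(\tau)}\|_{H^m(\Omega)}^2\le C$; and the initial datum is recovered in $X'$ because $u^{(\tau)}$ is bounded in the space of weakly continuous $X'$-valued functions on $[0,T]$. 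This gives the weak solution asserted in the theorem.

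I expect the genuine obstacle to be the interplay, \emph{at the discrete level}, between the degenerate entropy estimate and the duality estimate. On one side one must verify the lower bound $\na u^\top D^2h(u)A(u)\na u\ge c\sum_i|\na u_i^{s/2}|^2$ under the sharp constraint on the $\alpha_{ij}$ --- a delicate but finite quadratic-form computation --- and carry it through the time discretization; on the other side one must run the Gagliardo--Nirenberg bootstrap so that the Lebesgue exponents match up, which is exactly what forces $s<4$, and do all of this with constants independent of $\tau$ and $\eps$, which is why the discrete versions of the Pierre--Schmitt and Aubin--Lions lemmas are indispensable. Neither the (degenerate) gradient estimate nor the duality estimate suffices in isolation; it is their combination that is the heart of the argument.
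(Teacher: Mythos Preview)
Your scheme has a gap at the very first step: the entropy density $h$ from \eqref{1.hp} does \emph{not} satisfy Hypothesis~H1. With $a_{ij}(y)=\alpha_{ij}y^s$ and $s>1$ one computes $\partial h/\partial u_1=\int_c^{u_1}a_{21}'(y)/y\,dy=\tfrac{s\alpha_{21}}{s-1}(u_1^{s-1}-c^{s-1})$, whose range on $(0,\infty)$ is only $(-\tfrac{s\alpha_{21}}{s-1}c^{s-1},\infty)$, not all of $\R$. Hence $(Dh)^{-1}:\R^2\to D$ is undefined and Lemma~\ref{lem.ex} cannot be invoked: in the fixed-point argument the $H^m$-regularization forces $w^k\in L^\infty$ but imposes no sign, so for generic $w^k$ the back-transformation $u^k=(Dh)^{-1}(w^k)$ simply does not exist. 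The paper repairs this by adding a logarithmic correction $h_\eps(u)=h(u)+\eps\sum_i u_i(\log u_i-1)$, which restores the full range of $Dh_\eps$; but then $D^2h_\eps\,A$ is in general no longer positive semi-definite, so the diffusion matrix must be regularized as well, $A_\eps(u)=A(u)+\eps\,\mathrm{diag}(u_2,u_1)$. It is the positive definiteness of the \emph{regularized} product $D^2h_\eps\,A_\eps$ (Lemma~\ref{lem.pd}) that forces $4a_{ij}(y)\ge y\,a_{ij}'(y)$, i.e.\ $s\le4$ --- not a Gagliardo--Nirenberg bootstrap as you suggest.

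A second, less fatal, point: once the approximate problem is correctly set up, the entropy dissipation alone already delivers both $\na u_i^{s/2}$ \emph{and} $\na u_i^s$ in $L^2$, since the coefficient of $|\na u_i|^2$ in Lemma~\ref{lem.pd} carries the factor $(\alpha_{i0}+a_{ii}(u_i))a_{ji}'(u_i)/u_i\sim u_i^{s-2}+u_i^{2s-2}$. The Pierre--Schmitt duality estimate is therefore unnecessary, and the paper does not use it here; the uniform bound $u_ip_i(u)\in L^r(\Omega_T)$ for some $r>1$ follows directly from $(u_i^{(\tau)})^s\in L^2(0,T;H^1)\cap L^\infty(0,T;L^1)\hookrightarrow L^{2+2/d}(\Omega_T)$ together with the polynomial growth of $p_i$, and this is what controls the discrete time derivative in $L^r(0,T;X')$ and feeds the nonlinear Aubin lemma of \cite{CJL13}. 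Your duality-plus-bootstrap route might conceivably be made to work, but it is both more involved than needed and, as written, attached to an approximate scheme that cannot be set up without the $\eps$-regularization of $h$ and $A$.
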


Nonvanishing reaction terms $f(u)$ can be treated if they satisfy
an appropriate growth condition such that $f(u^{(\tau)})$ is bounded
in some $L^p$ space with $p>1$, where $u^{(\tau)}$ is a solution to an 
approximate problem. We leave the details to the reader.

The idea of the proof is to employ the entropy density
\begin{equation}\label{1.hp}
  h(u) = \int_c^{u_1}\int_c^z \frac{a_{21}'(y)}{y}dy dz
	+ \int_c^{u_2}\int_c^z \frac{a_{12}'(y)}{y}dydz
\end{equation}
for $u=(u_1,u_2)\in D=(0,\infty)^2$, where $c>0$. Hypothesis H2 is only satisfied 
if the restriction $(1-1/s)\alpha_{12}\alpha_{21}\le\alpha_{11}\alpha_{22}$ holds. 
It is an open problem whether there exists another entropy density fulfilling 
Hypothesis H2 without any restriction on $\alpha_{ij}$ (except positivity).
In order to satisfy Hypothesis H1, we need to regularize the entropy density
\eqref{1.hp} as in \cite{DLM13}:
$$
  h_\eps(u) = h(u) + \eps u_1(\log u_1-1) + \eps u_2(\log u_2-1).
$$
This regularization is motivated from the population model with diffusion matrix
\eqref{1.skt}, see Section \ref{sec.nvf}.
The range of $Dh_\eps$ equals $\R^2$, as required in Hypothesis H1.
However, this regularization makes necessary to regularize also the diffusion
matrix,
$$
  A_\eps(u) = A(u) + \eps\begin{pmatrix} u_2 & 0 \\ 0 & u_1 \end{pmatrix}.
$$
Then the regularized product $D^2h_\eps(u)A_\eps(u)$ is positive semi-definite 
(Hypothesis H2) only if $s<4$. This restriction may be improved by developing
a better regularization procedure.

This paper is organized as follows. In Section \ref{sec.ex}, we consider some
examples of cross-diffusion systems, studied in the literature, and
discuss the validity of our hypotheses.
The general existence Theorem \ref{thm.ex} is shown in Section \ref{sec.thm1}.
The proofs of Theorems \ref{thm.exq} and \ref{thm.exp} are presented in
Sections \ref{sec.thm2} and \ref{sec.thm3}, respectively. 
Some further results and open problems are discussed in Section \ref{sec.open}. 
The appendix is concerned with some relations of our method to non-equilibrium
thermodynamics; the derivation of a general population diffusion model,
containing the diffusion matrices \eqref{1.Aq} and \eqref{1.Ap} as special cases;
and the proof of a variant of the Aubin compactness lemma, needed in the proof
of Theorem \ref{thm.exq}. To simplify the presentation, we write in the following
$L^p(0,T;H^k(\Omega))$ instead of $L^p(0,T;H^k(\Omega;\R^n))$.


\section{Examples}\label{sec.ex}

We present some diffusion problems from physical and biological applications
and discuss the validity of Hypotheses H1-H3. Some examples do not satisfy the
hypotheses but similar ideas as detailed in the introduction apply.

\subsection{Examples with volume filling}\label{sec.vf}

\subsubsection*{Volume-filling model of Burger.}\label{sec.burger}
Burger et al.\ \cite{BSW12} derived from a lattice-based hopping model
a cross-diffusion system for $n$ species, incorporating volume-filling effects
and modeling the ion transport through narrow pores. The system consists
of equation \eqref{1.eq} with the diffusion matrix $A(u)=(a_{ij}(u))$, where
$$
  a_{ij}(u) = D_iu_i\quad\mbox{for }i\neq j, \quad 
	a_{ii}(u) = D_i(1-\rho+u_i), \quad\rho=\sum_{j=1}^n u_j,
$$
and $D_i>0$ are some constants.
It was shown in \cite{BSW12} that the entropy with density
$$
  h(u) = \sum_{i=1}^n u_i(\log u_i-1) + (1-\rho)(\log(1-\rho)-1),
$$
where $u=(u_1,\ldots,u_n)\in D=\{u\in(0,\infty)^n:\sum_{i=1}^n u_i<1\}$,
is a Lyapunov functional along solutions to \eqref{1.eq} with $f=0$.
This entropy density satisfies Hypothesis H1 since $u_i=((Dh)^{-1}(w))_i
= e^{w_i}/(1+\sum_{j=1}^n e^{w_j})\in D$ for $w=(w_1,\ldots,w_n)\in\R^n$.
The case of $n=2$ species was investigated in \cite{BDPS10}.
Then the diffusion matrix becomes
$$
  A(u) = \begin{pmatrix}
	D_1(1-u_2) & D_1u_1 \\ D_2u_2 & D_2(1-u_1)
	\end{pmatrix},
$$
which is a special case of \eqref{1.Aq} with $q(u_3)=u_3$ and $\beta=D_2/D_1$.
A computation shows that for $z=(z_1,z_2)\in\R^2$,
\begin{align*}
  z^\top D^2h(u)A(u)z
	&= D_1(1-\rho)\left(\frac{z_1^2}{u_1}+\frac{z_2^2}{u_2}\right)
	+ D_1\frac{2-\rho}{1-\rho}(z_1+z_2)^2 \\
	&\phantom{xx}{}+ (D_2-D_1)\frac{u_2}{1-\rho}
	\left(z_1+\frac{1-u_1}{u_2}z_2\right)^2.
\end{align*}
Consequently, assuming without loss of generality that $D_2>D_1$ (otherwise,
change the indices 1 and 2),
Hypothesis H2 is satisfied but not Hypothesis H2' since the
quadratic form degenerates at $\rho=1$. Burger et al.\ have shown in \cite{BDPS10}
that a global existence analysis is still possible. We generalize this result
to general functions $q$ in Theorem \ref{thm.exq}.

In the general case $n>2$ and the case of equal diffusivity constants
$D_1=D_i$ for all $i=2,\ldots,n$, summing up all equations yields
$\pa_t\rho-\Delta\rho=0$ in $\Omega$ with homogeneous Neumann boundary conditions.
Thus, by the classical maximum principle, $\rho$ is bounded from below
and above, and this yields $L^\infty$ bounds for the components $u_i\ge 0$.
The analysis of different diffusivity constants and $n>2$ is much more delicate
and an open problem.

\subsubsection*{Tumor-growth model}\label{sec.tumor}
Jackson and Byrne \cite{JaBy02} derived a continuous mechanical model 
for the growth of symmetric avascular tumors in one space dimension. 
In this model, the mass balance
equations for the volume fractions of the tumor cells $u_1$, the extracellular 
matrix $u_2$, and the water phases $u_3=1-u_1-u_2$ are supplemented by equations
for the velocity, obtained from a force balance. For a small cell-induced pressure
coefficient (i.e.\ $\theta=\beta=1$ in \cite{JaBy02}), 
the resulting diffusion matrix reads as
$$
  A(u) = \begin{pmatrix}
	u_1(1-u_1)-u_1u_2^2  & -u_1u_2(1+u_1) \\
	-u_1u_2+u_2^2(1-u_2) & u_2(1-u_2)(1+u_1)
	\end{pmatrix}.
$$
With the entropy density \eqref{1.log}, defined in $D$ which is given by \eqref{1.D},
we compute for $z=(z_1,z_2)^\top\in\R^2$,
$$
  z^\top D^2h(u)A(u)z = z_1^2 + (1+u_1)z_2^2 + u_1z_1z_2
  \ge \frac12 z_1^2 + \left(1+u_1-\frac12 u_2^2\right)z_2^2.
$$
Thus, since $u_2\le 1$, 
Hypotheses H1, H2', and H2'' are satisfied and the global existence
result follows from Theorem \ref{thm.exq}
if the reaction terms satisfy Hypothesis H3. This result was first proven in 
\cite{JuSt12}.

\subsubsection*{Maxwell-Stefan equations}\label{sec.ms}

The Maxwell-Stefan equations describe the diffusive transport of multicomponent
gaseous mixtures. In the case of mixtures of ideal gases consisting of $n$ components
under isobaric, isothermal conditions with vanishing barycentric velocity, 
the equations read as
$$
  \pa_t u_i + \diver J_i = f_i(u), \quad 
	\na u_i = -\sum_{j\neq i}\frac{u_jJ_i-u_iJ_j}{D_{ij}}, \quad i=1,\ldots,n,
$$
where $D_{ij}>0$ are the binary diffusion coefficients. The variables $u_i$
are the molar concentrations of the mixture and they satisfy $\sum_{j=1}^n u_j=1$.
The inversion of the flux-gradient relations is not straightforward since the
linear system in $J_i$ has a singular matrix; see \cite{Bot11,MaTe13}.
The idea of \cite{JuSt14} was to replace the last component $u_n$ by the remaining
ones by $u_n=1-\sum_{j=1}^{n-1}u_i$ and to analyze the remaining $n-1$ equations.
For $n=3$ components, the inversion can be made explicit, leading to
system \eqref{1.eq} for the remaining $n-1=2$ equations with the diffusion matrix
\begin{equation}\label{ex.ms}
  A(u) = \frac{1}{\delta(u)}
	\begin{pmatrix}
	d_2 + (d_0-d_2)u_1 & (d_0-d_1)u_1 \\
	(d_0-d_2)u_2 & d_1 + (d_0-d_1)u_2
	\end{pmatrix},
\end{equation}
where we abbreviated $d_{i+j-2}=D_{ij}$ and 
$\delta(u)=d_1d_2(1-u_1-u_2)+d_0(d_1u_1+d_2u_2)$.
The diffusion matrix \eqref{1.ms} in Section \ref{sec.illust}
is obtained after setting $d_0=3$, $d_1=2$, and $d_2=1$.

Employing the entropy density \eqref{1.log} as in the previous example, we compute
$$
  z^\top D^2h(u)A(u)z = \frac{d_2}{u_1}z_1^2 + \frac{d_1}{u_2}z_2^2
	+ \frac{d_0 u_1u_2}{1-u_1-u_2}(z_1+z_2)^2
$$
for $z=(z_1,z_2)^\top\in\R^2$. Therefore, Hypothesis H1 and H2 are fulfilled
with $D$ given by \eqref{1.D}. Moreover, 
Hypothesis H2' is satisfied with $m_1=m_2=0$, and also Hypothesis H2'' holds. 
Theorem \ref{thm.ex} yields the existence of
global bounded weak solutions to this problem. It can be shown that this
result holds true in the case of $n>3$ components; see \cite{JuSt14}.


\subsection{Models without volume filling}\label{sec.nvf}

\subsubsection*{Population model of Shigesada, Kawashima, and Teramoto}\label{sec.skt}
The model describes the evolution of two population densities $u_1$ and $u_2$
governed by equation \eqref{1.eq} with the diffusion matrix
$$
  A(u) = \begin{pmatrix}
	\alpha_{01} + 2\alpha_{11}u_1 + \alpha_{12}u_2 & \alpha_{12}u_1 \\
	\alpha_{21}u_2 & \alpha_{20} + \alpha_{21}u_1 + 2\alpha_{22}u_2
  \end{pmatrix},
$$
where the coefficients $\alpha_{ij}$ are nonnegative.
Introducing the entropy density
$$
  h(u) = \frac{u_1}{\alpha_{12}}(\log u_1-1) + \frac{u_2}{\alpha_{21}}(\log u_2-1), 
	\quad u=(u_1,u_2)\in D=(0,\infty)^2,
$$
a computation shows that Hypothesis H2 is satisfied, i.e.\ for all 
$z=(z_1,z_2)^\top\in\R^2$,
$$
  z^\top D^2h(u)A(u)z = \left(\frac{\alpha_{11}}{\alpha_{12}}
	+\frac{\alpha_{10}}{\alpha_{12}u_1}\right)z_1^2
	+ \left(\frac{\alpha_{22}}{\alpha_{21}}+\frac{\alpha_{20}}{\alpha_{21}u_2}\right)z_2^2
  + \left(\sqrt{\frac{u_2}{u_1}}z_1+\sqrt{\frac{u_1}{u_2}}z_2\right)^2.
$$
Thus, we see that Hypothesis H2' 
with $m_i=0$ or $m_i=1$ and Hypothesis H2'' hold true. Since 
$D=(0,\infty)^2$ is not bounded and the range of $Dh$ is not the whole $\R^2$,
we cannot apply Theorem \ref{thm.ex}. 
However, as the coefficients of $A(u)$ depend only linearly on $u$, the proof
of Theorem \ref{thm.exp} can be adapted to this case and we conclude 
the existence of global nonnegative weak solutions to this problem.
This statement was first proved in \cite{ChJu04} 
using a different approximation procedure.
In Theorem \ref{thm.exp} we generalize this result.

\subsubsection*{Semiconductor model with electron-hole scattering}\label{sec.semi}

The carrier transport through a semiconductor device with strong
electron-hole scattering but vanishing electric field 
can be modeled by equation \eqref{1.eq} with diffusion matrix
$$
  A(u) = \frac{1}{1+\mu_2u_1+\mu_1u_2}\begin{pmatrix}
	\mu_1(1+\mu_2 u_1) & \mu_1\mu_2 u_1 \\
	\mu_1\mu_2 u_2     & \mu_2(1+\mu_1 u_2)
	\end{pmatrix},
$$
where $u_1$ and $u_2$ are the electron and hole densities, respectively,
and $\mu_1$ and $\mu_2$ denote the corresponding (positive) mobility constants.
This model was formally derived from the semiconductor Boltzmann equation 
with a collision operator taking into account strong electron-hole scattering 
\cite{Rez95}. The global existence of weak solutions was shown in \cite{ChJu07}. 

Introducing the entropy density
$$
  h(u) = u_1(\log u_1-1)+u_2(\log u_2-1), 
$$
where $u=(u_1,u_2)\in D=(0,\infty)^2$, we obtain for $z=(z_1,z_2)^\top\in\R^2$,
$$
  z^\top D^2h(u)A(u)z = \frac{1}{1+\mu_2u_1+\mu_1u_2}\left(
	\frac{\mu_1}{u_1}z_1^2 + \frac{\mu_2}{u_2}z_2^2
	+ \mu_1\mu_2(z_1+z_2)^2\right).
$$
Thus, Hypothesis H2 is satisfied but not H2' since the quadratic form is
not uniformly positive. However, as shown in \cite{ChJu07}, bounds on the 
electron and hole masses (i.e.\ the integrals of $u_1$ and $u_2$)
together with estimates from the entropy dissipation yield an $H^1$
bound for $u_1^{1/2}$ and $u_2^{1/2}$. Then, with the entropy variables
$w_i=\pa h/\pa u_i=\log u_i$, 
the existence of global weak solutions was proved in \cite{ChJu07}.


\section{Proof of Theorem \ref{thm.ex}}\label{sec.thm1}

The proof is based on the solution
of a time-discrete and regularized problem, for which only Hypotheses 
H1, H2, and H3 are needed. 

{\em Step 1. Solution of an approximate problem.}
Let $T>0$, $N\in\N$, $\tau=T/N$, $m\in\N$ with $m>d/2$, and $\tau>0$.
Let $w^{k-1}\in L^\infty(\Omega;\R^n)$ be given. (If $k=1$, we define
$w^0=(Dh)^{-1}(u^0)$ which is possible since we assumed that $u^0(x)\in D$ for
$x\in\Omega$.)
We wish to find $w^k\in H^{m}(\Omega;\R^n)$ such that 
\begin{align}
  \frac{1}{\tau}\int_\Omega & (u(w^k)-u(w^{k-1}))\cdot\phi dx
	+ \int_\Omega\na\phi:B(w^k)\na w^k dx \nonumber \\
	&{}+ \eps\int_\Omega\bigg(\sum_{|\alpha|=m}D^{\alpha}w^k\cdot D^{\alpha}\phi 
	+ w^k\cdot\phi\bigg)dx
  = \int_\Omega f(u(w^k))\cdot\phi dx \label{a.reg}
\end{align}
for all $\phi\in H^m(\Omega;\R^n)$,
where $\alpha=(\alpha_1,\ldots,\alpha_n)\in\N_0^n$ with 
$|\alpha|=\alpha_1+\cdots+\alpha_n=m$ is a multiindex and 
$D^\alpha=\pa^m/(\pa x_1^{\alpha_1}\cdots\pa x_n^{\alpha_n})$ 
is a partial derivative of order $m$, and
$u(w)=(Dh)^{-1}(w)$. We observe that $m$ is chosen in such a way that
$H^{m}(\Omega)\hookrightarrow L^\infty(\Omega)$. 

\begin{lemma}[Existence for the regularized system \eqref{a.reg}]\label{lem.ex}
Let Assumptions H1, H2, and H3 hold. Let $u^0\in L^\infty(\Omega;\R^n)$ be such that
$u^0(x)\in D$ for $x\in\Omega$, and let $0<\tau<1/C_f$, where $C_f>0$ is defined in H3.
Then there exists a weak solution $w^k\in H^m(\Omega;\R^n)$ to
\eqref{a.reg} satisyfing $u(w^k(x))\in D$ for $x\in\Omega$ and the 
discrete entropy-dissipation inequality
\begin{align}
  (1-C_f\tau)\int_\Omega h(u(w^k))dx &+ \tau\int_\Omega\na w^k:B(w^k)\na w^k dx
	+ \eps\tau\int_\Omega\bigg(\sum_{|\alpha|=m}|D^{\alpha}w|^2 + |w|^2\bigg)dx 
	\nonumber \\
	&\le \tau C_f\mbox{\rm meas}(\Omega) + \int_\Omega h(u(w^{k-1}))dx. \label{a.edi}
\end{align}
\end{lemma}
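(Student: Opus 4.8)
The plan is to establish existence for the regularized elliptic system \eqref{a.reg} via a fixed-point argument combined with an a priori estimate coming from the entropy structure, and then to read off \eqref{a.edi} from that estimate. First I would fix $w^{k-1}\in L^\infty(\Omega;\R^n)$ and, for a parameter $\sigma\in[0,1]$ and a given $y\in L^\infty(\Omega;\R^n)$, consider the linear problem: find $w\in H^m(\Omega;\R^n)$ such that
\[
  \eps\int_\Omega\Big(\sum_{|\alpha|=m}D^\alpha w\cdot D^\alpha\phi + w\cdot\phi\Big)dx
  + \int_\Omega \na\phi:B(y)\na w\,dx
  = \sigma\int_\Omega\Big(f(u(y))-\tfrac1\tau(u(y)-u(w^{k-1}))\Big)\cdot\phi\,dx
\]
for all $\phi\in H^m(\Omega;\R^n)$. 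The bilinear form on the left is bounded on $H^m(\Omega;\R^n)$ (using $H^m(\Omega)\hookrightarrow L^\infty(\Omega)$ to control $B(y)$, and the continuity of $B$ from Hypothesis H2 together with $u(y)\in D$) and coercive: the $\eps$-term dominates while $\int_\Omega\na\phi:B(y)\na\phi\,dx\ge 0$ by Hypothesis H2 (positive semidefiniteness of $B(y)=A(u)(D^2h(u))^{-1}$, equivalently of $D^2h(u)A(u)$). The right-hand side is a bounded functional on $L^2(\Omega;\R^n)$, hence on $H^m(\Omega;\R^n)$, since $u(y),u(w^{k-1})\in D$ are bounded and $f$ is continuous on $D$. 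Thus Lax–Milgram gives a unique solution $w$, and this defines a map $S(\sigma,\cdot):L^\infty(\Omega;\R^n)\to H^m(\Omega;\R^n)\hookrightarrow\hookrightarrow L^\infty(\Omega;\R^n)$, which is continuous and compact; $S(0,\cdot)\equiv 0$.

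Next I would apply the Leray–Schauder fixed-point theorem. The required uniform bound on all fixed points $w=S(\sigma,w)$ is obtained by taking $\phi=w$ — no, rather $\phi=Dh(u(w))=w$ is already the natural test function, but to exploit convexity of $h$ the cleaner choice is $\phi=w$ itself combined with the convexity inequality
\[
  h(u(w^{k-1}))-h(u(w)) \ge Dh(u(w))\cdot\big(u(w^{k-1})-u(w)\big) = w\cdot\big(u(w^{k-1})-u(w)\big),
\]
valid since $h$ is convex (Hypothesis H1). Testing \eqref{a.reg} (with the $\sigma$-scaling) with $\phi=w$, using this inequality to bound the discrete time-derivative term below by $\frac1\tau\int_\Omega(h(u(w))-h(u(w^{k-1})))dx$, using $\int_\Omega\na w:B(w)\na w\,dx\ge0$, and using Hypothesis H3 in the form $\sigma f(u(w))\cdot w=\sigma f(u(w))\cdot Dh(u(w))\le C_f(1+h(u(w)))$, one arrives exactly at the inequality
\[
  (1-C_f\tau)\int_\Omega h(u(w))dx + \tau\int_\Omega\na w:B(w)\na w\,dx
  + \eps\tau\int_\Omega\Big(\sum_{|\alpha|=m}|D^\alpha w|^2+|w|^2\Big)dx
  \le \tau C_f\,\mathrm{meas}(\Omega) + \int_\Omega h(u(w^{k-1}))dx,
\]
with the various $\sigma$'s dropped using $\sigma\in[0,1]$, $h\ge0$, and $1-C_f\tau>0$. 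Since $h\ge0$, the left-hand side controls $\eps\tau\|w\|_{H^m}^2$, giving the uniform bound needed for Leray–Schauder; the fixed point at $\sigma=1$ is the desired solution $w^k$, and for it $u(w^k(x))=(Dh)^{-1}(w^k(x))\in D$ pointwise by Hypothesis H1. Specializing the displayed estimate to $w=w^k$ yields \eqref{a.edi}.

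The main obstacle is the a priori estimate, and within it the correct handling of the discrete time-derivative term: one must use the convexity of $h$ (not just differentiability) to turn $\frac1\tau\int_\Omega(u(w^k)-u(w^{k-1}))\cdot w^k\,dx$ into something bounded below by a difference of entropies, so that the nonnegativity of $h$ can then be invoked to close the estimate; the $\eps$-regularization is what makes the bilinear form coercive on $H^m$ and thus makes Lax–Milgram and the compactness of $S$ available, while the positive semidefiniteness in Hypothesis H2 is exactly what prevents the cross-diffusion term from spoiling either coercivity or the a priori bound. Everything else — continuity and compactness of $S$, boundedness of the nonlinear terms on $D$ — is routine given $u(w)\in D$ and $H^m(\Omega)\hookrightarrow L^\infty(\Omega)$.
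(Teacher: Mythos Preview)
Your proposal is correct and follows essentially the same approach as the paper: linearize by freezing $y$ and introducing a homotopy parameter, solve the linear problem by Lax--Milgram (coercivity from the $\eps$-regularization plus positive semi-definiteness of $B$, together with a generalized Poincar\'e inequality in $H^m$), define the fixed-point map into $L^\infty$ via the compact embedding $H^m\hookrightarrow L^\infty$, and close the Leray--Schauder argument by testing with $\phi=w$ and using convexity of $h$ and Hypothesis H3. The only minor slips are attributional (continuity of $B$ comes from $A\in C^0$ in H3 and $h\in C^2$ in H1, not from H2) and that the coercivity step implicitly uses the generalized Poincar\'e inequality to pass from $\sum_{|\alpha|=m}\|D^\alpha w\|_{L^2}^2+\|w\|_{L^2}^2$ to the full $H^m$ norm.
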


\begin{proof}
Let $y\in L^\infty(\Omega;\R^n)$ and $\delta\in[0,1]$ be given.
We solve first the following linear problem:
\begin{equation}\label{a.a}
  a(w,\phi) = F(\phi)\quad\mbox{for all }\phi\in H^{m}(\Omega;\R^n),
\end{equation}
where
\begin{align*}
  a(w,\phi) &= \int_\Omega\na\phi:B(y)\na w dx
	+ \eps\int_\Omega\bigg(\sum_{|\alpha|=m}D^{\alpha}w\cdot D^{\alpha}\phi 
	+ w\cdot\phi\bigg)dx, \\
	F(\phi) &= -\frac{\delta}{\tau}\int_\Omega\big(u(y)-u(w^{k-1})\big)\cdot\phi dx
	+ \delta\int_\Omega f(u(y))\cdot\phi dx.
\end{align*} 
The forms $a$ and $F$ are bounded on $H^{m}(\Omega;\R^n)$. 
The matrix $B(y)=A(u(y))(D^2h)^{-1}(u(y))$ 
is positive semi-definite since, by Assumption H2,
$$
  z^\top B(y)z = ((D^2h)^{-1}z)^\top(D^2h)A(u(y))((D^2h)^{-1}z) \ge 0
	\quad\mbox{for all }z\in\R^n.
$$
Hence, the bilinear form $a$ is coercive:
$$
  a(w,w) \ge \eps\int_\Omega\bigg(\sum_{|\alpha|=m}|D^\alpha w|^2 + |w|^2\bigg)dx 
	\ge \eps C\|w\|^2_{H^{m}(\Omega)} \quad\mbox{for }
	w\in H^m(\Omega;\R^n).
$$
The last inequality follows from the generalized Poincar\'e inequality for
$H^m$ spaces \cite[Chap.~II.1.4, Formula (1.39)]{Tem97}, and $C>0$ is some
constant only depending on $\Omega$.
Therefore, we can apply the Lax-Milgram lemma to obtain the existence of a unique
solution $w\in H^{m}(\Omega;\R^n)\hookrightarrow L^\infty(\Omega;\R^n)$ 
to \eqref{a.a}. This defines the fixed-point operator
$S:L^\infty(\Omega;\R^n)\times[0,1]\to L^\infty(\Omega;\R^n)$, $S(y,\delta)=w$, where
$w$ solves \eqref{a.a}. 

It holds $S(y,0)=0$ for all $y\in L^\infty(\Omega;\R^n)$. We claim that
the operator $S$ is continuous. Indeed, let $\delta_\eta\to\delta$ in $\R$ and
$y_\eta\to y$ strongly in $L^\infty(\Omega;\R^n)$ as $\eta\to 0$ and set
$w_\eta=S(y_\eta,\delta_\eta)\in H^{m}(\Omega;\R^n)$. Then, by continuity, 
$u(y_\eta)\to u(y)$, $B(y_\eta)\to B(y)$, and $f(u(y_\eta))\to f(u(y))$ 
strongly in $L^\infty(\Omega;\R^n)$, and the above
coercivity estimate gives a uniform bound for $(w_\eta)$ in $H^{m}(\Omega;\R^n)$.
This implies that, for a subsequence which is not relabeled,
$w_\eta\rightharpoonup w$ weakly in $H^{m}(\Omega;\R^n)$. Then, performing the
limit $\eta\to 0$ in \eqref{a.a}, it follows that $w=S(y,\delta)$. In view
of the compact embedding $H^{m}(\Omega;\R^n)\hookrightarrow L^\infty(\Omega;\R^n)$,
we infer that for a subsequence, $S(y_\eta,\delta_\eta)=w_\eta\to w=S(y,\delta)$
strongly in $L^\infty(\Omega;\R^n)$. Since the limit $w$ is unique, the whole
sequence $(w_\eta)$ is converging which shows the continuity. Furthermore,
the compact embedding $H^{m}(\Omega;\R^n)\hookrightarrow L^\infty(\Omega;\R^n)$
yields the compactness of $S$.

It remains to prove a uniform bound for all fixed points of $S(\cdot,\delta)$ in
$L^\infty(\Omega;\R^n)$. Let $w\in L^\infty(\Omega;\R^n)$ be such a fixed point.
Then $w$ solves \eqref{a.a} with $y$ replaced by $w$. 
With the test function $\phi=w$, we find that
\begin{align}
  \frac{\delta}{\tau}\int_\Omega (u(w)-u(w^{k-1}))\cdot w dx
	&+ \int_\Omega \na w:B(w)\na w dx \nonumber \\
	&+ \eps\int_\Omega\bigg(\sum_{|\alpha|=m}|D^\alpha w|^2 + |w|^2\bigg)dx 
	= \delta\int_\Omega f(u(w))\cdot w dx. \label{a.aux}
\end{align}
The convexity of $h$ implies that $h(x)-h(y)\le Dh(x)\cdot(x-y)$ for
all $x$, $y\in D$. Choosing $x=u(w)$ and $y=u(w^{k-1})$ and using
$Dh(u(w))=w$, this gives
$$
  \frac{\delta}{\tau}\int_\Omega (u(w)-u(w^{k-1}))\cdot w dx
	\ge \frac{\delta}{\tau}\int_\Omega\big(h(u(w))-h(u(w^{k-1}))\big)dx.
$$
Taking into account the positive semi-definiteness of $B(w)$ 
and Assumption H3, \eqref{a.aux} can be estimated as follows:
\begin{align*}
  \delta\int_\Omega h(u(w))dx 
	&+ \eps\tau\int_\Omega\bigg(\sum_{|\alpha|=m}|D^\alpha w|^2 + |w|^2\bigg)dx \\
	&\le C_f\tau\delta \int_\Omega(1+h(u(w)))dx	+ \delta\int_\Omega h(u(w^{k-1}))dx.
\end{align*}
Choosing $\tau<1/C_f$, this yields an $H^{m}$ bound for $w$ uniform in 
$\delta$ (not uniform in $\tau$ or $\eps$).
The Leray-Schauder fixed-point theorem shows that there exists a solution
$w\in H^{m}(\Omega;\R^n)$ to \eqref{a.a} with $y$ replaced by $w$ and $\delta=1$.
\end{proof}

{\em Step 2. Uniform bounds.} 
By Lemma \ref{lem.ex}, there exists a weak solution $w^k\in H^{m}(\Omega;\R^n)$
to \eqref{a.reg}. Because of the boundedness of $D$, $a<u_i(w^k)<b$ for
$i=1,\ldots,n$. We need a priori estimates uniform in $\tau$
and $\eps$. For this, let $w^{(\tau)}(x,t)=w^k(x)$ and
$u^{(\tau)}(x,t)=u(w^k(x))$ for $x\in\Omega$ and
$t\in((k-1)\tau,k\tau]$, $k=1,\ldots,N$. At time $t=0$, we set 
$w^{(\tau)}(\cdot,0)=Dh(u^0)$ and 
$u^{(\tau)}(\cdot,0)=u^0$. Let $u^{(\tau)}=(u_1^{(\tau)},\ldots,u_n^{(\tau)})$.
Furthermore, we introduce the shift operator
$(\sigma_\tau u^{(\tau)})(\cdot,t)=u(w^{k-1}(x))$ for $x\in\Omega$,
$(k-1)\tau<t\le k\tau$, $k=1,\ldots,N$.
Then $u^{(\tau)}$ solves the equation
\begin{align}
  \frac{1}{\tau}\int_0^T & \int_\Omega(u^{(\tau)}-\sigma_\tau u^{(\tau)})\cdot\phi dxdt
	+ \int_0^T\int_\Omega\na\phi:B(w^{(\tau)})\na w^{(\tau)}dxdt \nonumber \\
	&{}+ \eps\int_0^T\int_\Omega\bigg(\sum_{|\alpha|=m}D^\alpha w^{(\tau)}\cdot
	D^\alpha\phi + w^{(\tau)}\cdot\phi\bigg)dxdt
	= \int_0^T\int_\Omega f(u^{(\tau)})\cdot\phi dxdt \label{a.aux2}
\end{align}
for piecewise constant functions $\phi:(0,T)\to H^m(\Omega;\R^n)$. We note that
this set of functions is dense in $L^2(0,T;H^m(\Omega;\R^n))$ 
\cite[Prop. 1.36]{Rou05}, such that the weak formulation also 
holds for such functions. By Assumption H2' and $\na w^{(\tau)}
=D^2h(u^{(\tau)})\na u^{(\tau)}$, we find that
\begin{align*}
  \int_\Omega \na w^{(\tau)}:B(w^{(\tau)})\na w^{(\tau)} dx 
	&= \int_\Omega \na u^{(\tau)}:D^2 h(u^{(\tau)})A(u^{(\tau)})\na u^{(\tau)} dx \\
	&\ge \sum_{i=1}^n\int_\Omega \alpha_i(u_i^{(\tau)})^2|\na u_i^{(\tau)}|^2 dx
	= \sum_{i=1}^n\int_\Omega |\na\widetilde\alpha_i(u_i^{(\tau)})|^2 dx,
\end{align*}
where $\widetilde\alpha'_i=\alpha_i$. Then either
$\widetilde\alpha_i(y)=(\alpha_i^*/m_i)(y-a)^{m_i}$ or 
$\widetilde\alpha_i(y)=(\alpha_i^*/m_i)(b-y)^{m_i}$. It follows from
\eqref{a.edi} that
\begin{align*}
  (1- & C_f\tau)\int_\Omega h(u(w^k))dx 
	+ \tau\sum_{i=1}^n\int_\Omega|\na\widetilde\alpha_i(u_i(w^k))|^2 dx \\
	&{}+ \eps\tau\int_\Omega\bigg(\sum_{|\alpha|=m}
	|D^\alpha w^k|^2 + |w^k|^2\bigg)dx 
  \le C_f\tau\mbox{meas}(\Omega) + \int_\Omega h(u(w^{k-1}))dx.
\end{align*}
Solving these inequalities recursively, we infer that
\begin{align*}
  \int_\Omega & h(u(w^k))dx 
	+ \tau\sum_{j=1}^k(1-C_f\tau)^{-(k+1-j)}\sum_{i=1}^n
	\int_\Omega|\na\widetilde\alpha_i(u_i(w^j))|^2 dx \\
	&\phantom{xx}{}
	+ \eps\tau\sum_{j=1}^k(1-C_f\tau)^{-(k+1-j)}\int_\Omega\bigg(\sum_{|\alpha|=m}
	|D^\alpha w^j|^2 + |w^j|^2\bigg)dx \\
  &\le C_f\tau\mbox{meas}(\Omega)\sum_{j=1}^k(1-C_f\tau)^{-j} + \int_\Omega h(u^0)dx.  
\end{align*}
Since $(1-C_f\tau)^{-(k+1-j)}\ge 1$ for $j=1,\ldots,k$
and $\sum_{j=1}^k(1-C_f\tau)^{-j}\le (\exp(C_f k\tau)+1)/(C_f\tau)$, we obtain
for $k\tau\le T$,
\begin{align*}
  \int_\Omega h(u(w^k))dx 
	&+ \tau\sum_{j=1}^k\sum_{i=1}^n\int_\Omega|\na\widetilde\alpha_i(u_i(w^j))|^2 dx
	+ \eps\tau\sum_{j=1}^k\int_\Omega\bigg(\sum_{|\alpha|=m}
	|D^\alpha w^j|^2 + |w^j|^2\bigg)dx \\
  &\le \mbox{meas}(\Omega)(1+e^{C_fT}) + \int_\Omega h(u^0)dx.  
\end{align*}
Together with the $L^\infty$ bounds for $u^{(\tau)}$, this gives the
following uniform bounds:
\begin{equation}\label{a.entest}
  \|\widetilde\alpha(u^{(\tau)})\|_{L^2(0,T;H^1(\Omega))} \le C, \quad
	\sqrt{\eps}\|w^{(\tau)}\|_{L^2(0,T;H^m(\Omega))} \le C,
\end{equation}
where $C>0$ denotes here and in the following a generic constant independent
of $\tau$ and $\eps$. For $m_i\le 1$ (appearing in $\widetilde\alpha_i$), we have
\begin{align}
  \|\na u_i^{(\tau)}\|_{L^2(0,T;L^2(\Omega))}
	&= \|\alpha_i(u_i^{(\tau)})^{-1}\na\widetilde\alpha_i(u_i^{(\tau)})
	\|_{L^2(0,T;L^2(\Omega))} \nonumber \\
	&\le (\alpha_i^*)^{-1}(b-a)^{1-m_i}
	\|\na\widetilde\alpha_i(u_i^{(\tau)})\|_{L^2(0,T;L^2(\Omega))} \le C.
	\label{a.u1}
\end{align}
If $m_i>1$, it follows that either
\begin{equation}\label{a.u2}
  \begin{aligned}
  \|\na (u_i^{(\tau)}-a)^{m_i}\|_{L^2(0,T;L^2(\Omega))}
	&= (\alpha_i^*)^{-1}\|\na\widetilde\alpha_i(u_i^{(\tau)})\|_{L^2(0,T;L^2(\Omega))} 
	\le C\quad\mbox{or} \\
	\|\na (b-u_i^{(\tau)})^{m_i}\|_{L^2(0,T;L^2(\Omega))}
	&= (\alpha_i^*)^{-1}\|\na\widetilde\alpha_i(u_i^{(\tau)})\|_{L^2(0,T;L^2(\Omega))} 
	\le C.
	\end{aligned}
\end{equation}

In order to derive a uniform estimate for the discrete time derivative,
let $\phi\in L^2(0,T;$ $H^m(\Omega;\R^n))$. Then
\begin{align*}
  \frac{1}{\tau} & \left|\int_\tau^T\int_\Omega(u^{(\tau)}-\sigma_\tau u^{(\tau)})
	\cdot \phi dxdt\right| \\
	&\le \|A(u^{(\tau)})\na u^{(\tau)}\|_{L^2(0,T;L^2(\Omega))}
	\|\na\phi\|_{L^2(0,T;L^2(\Omega))} \\
	&\phantom{xx}{}+ \eps\|w^{(\tau)}\|_{L^2(0,T;H^m(\Omega))}
	\|\phi\|_{L^2(0,T;H^m(\Omega))}
	+ \|f(u^{(\tau)})\|_{L^2(0,T;L^2(\Omega))}\|\phi\|_{L^2(0,T;L^2(\Omega))}.
\end{align*}
The first term on the right-hand side is uniformly bounded since, by 
Assumption H2'', 
\begin{align*}
  \|(A(u^{(\tau)})\na u^{(\tau)})_i\|_{L^2(0,T;L^2(\Omega))}^2
	&\le \sum_{j=1,\ m_j>1}^n\left\|\frac{a_{ij}(u^{(\tau)})}{\alpha_j(u_j^{(\tau)})}
	\right\|_{L^\infty(0,T;L^\infty(\Omega))}^2
	\|\na\widetilde\alpha_j(u_j^{(\tau)})\|_{L^2(0,T;L^2(\Omega))}^2 \\
	&\phantom{xx}{}+
	\sum_{j=1,\ m_j\le 1}^n\|a_{ij}(u^{(\tau)})\|_{L^\infty(0,T;L^\infty(\Omega))}^2
	\|\na u_j^{(\tau)}\|_{L^2(0,T;L^2(\Omega))}^2 \\
	&\le C,
\end{align*}
using \eqref{a.entest} and \eqref{a.u1}.
Thus, by the $L^\infty$ bound for $u^{(\tau)}$ and \eqref{a.entest},
\begin{equation}\label{a.uteps}
  \frac{1}{\tau} \left|\int_\tau^T\int_\Omega(u^{(\tau)}-\sigma_\tau u^{(\tau)})
	\cdot \phi dxdt\right|
	\le C\big(\sqrt{\eps}\|\phi\|_{L^2(0,T;H^m(\Omega))}
	+ \|\phi\|_{L^2(0,T;H^1(\Omega))}\big),
\end{equation}
which shows that
\begin{equation}\label{a.ut}
  \tau^{-1}\|u^{(\tau)}-\sigma_\tau u^{(\tau)}\|_{L^2(\tau,T;(H^m(\Omega))')} \le C.
\end{equation}

{\em Step 3. The limit $(\tau,\eps)\to 0$.}
The uniform estimates \eqref{a.ut} and either \eqref{a.u1} or \eqref{a.u2}
allow us to apply the Aubin lemma in the version of \cite[Theorem 1]{DrJu12} 
(if $m_i\le 1$) or in the version of \cite[Theorem 3a]{CJL13} (if $m_i>1$;
apply the theorem to $v_i^{(\tau)}=u_i^{(\tau)}-a$ or $v_i^{(\tau)}=b-u_i^{(\tau)}$),
showing that, up to a subsequence which is not relabeled, as $(\tau,\eps)\to 0$,
$$
  u^{(\tau)} \to u \quad\mbox{strongly in }L^2(0,T;L^2(\Omega))\mbox{ and a.e. in }
	\Omega\times(0,T).
$$
Because of the boundedness of $u_i^{(\tau)}$ in $L^\infty$, this convergence
even holds in $L^p(0,T;L^p(\Omega))$ for any $p<\infty$, which is a consequence
of the dominated convergence theorem. In particular,
$A(u^{(\tau)})\to A(u)$ strongly in $L^p(0,T;L^p(\Omega))$.
Furthermore, by weak compactness, for a subsequence,
\begin{align*}
  A(u^{(\tau)})\na u^{(\tau)} \rightharpoonup U 
	&\quad\mbox{weakly in }L^2(0,T;L^2(\Omega)), \\
	\tau^{-1}(u^{(\tau)}-\sigma_\tau u^{(\tau)}) \rightharpoonup \pa_t u
	&\quad\mbox{weakly in }L^2(0,T;H^m(\Omega)'), \\
	\eps w^{(\tau)}\to 0 &\quad\mbox{strongly in }L^2(0,T;H^m(\Omega)).
\end{align*}

We claim that $U=A(u)\na u$. Indeed, observing that
$\widetilde\alpha_i(u_i^{(\tau)})\to \widetilde\alpha_i(u_i)$ a.e., it follows that
$$
  \na\widetilde\alpha_i(u_i^{(\tau)})\rightharpoonup \na\widetilde\alpha_i(u_i)
	\quad\mbox{weakly in }L^2(0,T;L^2(\Omega)).
$$
Let $m_j>1$.
Since the quotient $a_{ij}(u^{(\tau)})/\alpha_j(u_j^{(\tau)})$ is bounded,
by Assumption H2'', a subsequence converges weakly* in $L^\infty$. Taking into
account the a.e.\ convergence of $(u^{(\tau)})$, this implies that 
$a_{ij}(u^{(\tau)})/\alpha_j(u_j^{(\tau)})\rightharpoonup^* a_{ij}(u)/\alpha_j(u_j)$
weakly* in $L^\infty(0,T;L^\infty(\Omega))$ and a.e.\ and hence strongly in any
$L^p(0,T;L^p(\Omega))$ for $p<\infty$. 
Furthermore, if $m_j\le 1$,
since $a_{ij}$ is continuous in $\overline{D}$ and $u^{(\tau)}(x,t)\in D$,
$a_{ij}(u^{(\tau)})\to a_{ij}(u)$ strongly in $L^p(0,T;L^p(\Omega))$
for all $p<\infty$. We conclude that
\begin{align*}
  (A(u^{(\tau)})\na u^{(\tau)})_i
	&= \sum_{j=1,\ m_j>1}^n \frac{a_{ij}(u^{(\tau)})}{\alpha_j(u_j^{(\tau)})}
	\na\widetilde\alpha_j(u_j^{(\tau)})
	+ \sum_{j=1,\ m_j\le 1}^n a_{ij}(u^{(\tau)})\na u_j^{(\tau)} \\
	&\rightharpoonup \sum_{j=1\ m_j\le 1}^n \frac{a_{ij}(u)}{\alpha_j(u_j)}
	\na\widetilde\alpha_j(u_j) + \sum_{j=1,\ m_j\le 1}^n a_{ij}(u)\na u_j
	= (A(u)\na u)_i
\end{align*}
weakly in $L^q(0,T;L^q(\Omega))$ for all $q<2$. Thus proves the claim.
Finally, we observe that $f(u^{(\tau)})\to f(u)$ strongly in $L^p(0,T;L^p(\Omega))$ 
for $p<\infty$. Therefore, we can pass to the limit $(\tau,\eps)\to 0$ in 
\eqref{a.aux2} to obtain a solution to 
$$
  \int_0^T\langle\pa_t u,\phi\rangle dt
	+ \int_0^T\int_\Omega \na\phi:A(u)\na u dxdt 
	= \int_0^T\int_\Omega f(u)\cdot\phi dxdt
$$
for all $\phi\in L^2(0,T;H^m(\Omega;\R^n))$. In fact, 
performing the limit $\eps\to 0$ and then $\tau\to 0$, we see from \eqref{a.uteps}
that $\pa_t u\in L^2(0,T;H^1(\Omega)')$, and consequently, the 
weak formulation also holds for all $\phi\in L^2(0,T;H^1(\Omega))$.
Moreover, it contains the homogeneous Neumann boundary conditions
in \eqref{1.bic}

It remains to show that $u(0)$ satisfies the initial datum. Let
$\widetilde u^{(\tau)}$ be the linear interpolant 
$\widetilde u^{(\tau)}(t)=u_k-(k\tau-t)(u^k-u^{k-1})/\tau$ for
$(k-1)\tau\le t\le k\tau$, where $u^k=u(w^k)$. Because of \eqref{a.ut},
$$
  \|\pa_t \widetilde u^{(\tau)}\|_{L^2(0,T-\tau;H^m(\Omega)')}
	\le \tau^{-1}\|u^{(\tau)}-\sigma_\tau u^{(\tau)}\|_{L^2(\tau,T;H^m(\Omega)')}
	\le C.
$$
This shows that $(\widetilde u^{(\tau)})$ is bounded in $H^1(0,T;H^m(\Omega)')$. 
Thus, for a subsequence, $\widetilde u^{(\tau)}\rightharpoonup w$
weakly in $H^1(0,T;H^m(\Omega)')\hookrightarrow C^0([0,T];H^m(\Omega)')$
and, by weak continuity, $\widetilde u^{(\tau)}(0)\rightharpoonup w(0)$
weakly in $H^m(\Omega;\R^n)'$. However,
$\widetilde u^{(\tau)}$ and $u^{(\tau)}$ converge to the same limit since
$$
  \|\widetilde u^{(\tau)}-u^{(\tau)}\|_{L^2(0,T-\tau;H^m(\Omega)')}
	\le \|u^{(\tau)}-\sigma_\tau u^{(\tau)}\|_{L^2(\tau,T;H^m(\Omega)')}
	\le \tau C\to 0
$$
as $\tau\to 0$. We infer that $w=u$ and $u^0=\widetilde u^{(\tau)}(0)
\rightharpoonup u(0)$ weakly in $H^m(\Omega;\R^n)'$. This shows that
the initial datum is satisfied in the sense of $H^m(\Omega;\R^n)'$ and,
in view of $u\in H^1(0,T;H^1(\Omega)')$, also in $H^1(\Omega;\R^n)'$.


\section{Proof of Theorem \ref{thm.exq}}\label{sec.thm2}

We impose more general assumptions on $q$ than in Theorem \ref{thm.exq}: 
Let $q\in C^1([0,1])$ be positive and nondecreasing on $(0,1)$ such that
\begin{equation}
  q(0)=0, \ \mbox{there exists }\kappa>0\mbox{ such that }
	\tfrac12 yq'(y)\le q(y)\le\kappa q'(y)^2\mbox{ for }y\in[0,1].
	\label{se.assq}
\end{equation}
For instance, the functions $q(y)=y/(1+y)^s$ with $0\le s\le 1$
and $q(y)=y^s$ with $1\le s\le 2$
fulfill \eqref{se.assq}. The case $q(y)=y^s$ with any $s\ge 1$ 
is also possible, see Theorem \ref{thm.exq2}.

\begin{theorem}\label{thm.exq1}
Let $\beta>0$ and let $q\in C^1([0,1])$ be positive and nondecreasing on $(0,1)$.
We assume that \eqref{se.assq} holds and that $A(u)$ is given by \eqref{1.Aq}.
Let $u^0=(u_1^0,u_2^0)\in L^1(\Omega)$ with $0<u_1^0,u_2^0<1$ and $u_1^0+u_2^0<1$.
Then there exists a solution $u$ to \eqref{1.eq}-\eqref{1.bic} with $f=0$ satisfying
\begin{align*}
  & 0\le u_i,u_3\le 1\mbox{ in }\Omega,\ t>0, \quad
	\pa_t u_i\in L^2(0,T;H^1(\Omega)'), \\
	& q(u_3)^{1/2},\,q(u_3)^{1/2}u_i\in L^2(0,T;H^1(\Omega)), \quad i=1,2,
\end{align*}
where $u_3=1-u_1-u_2\ge 0$, the weak formulation \eqref{1.qw} holds for all 
$\phi\in L^2(0,T;H^1(\Omega))$, and $u(0)=u^0$ in the sense of $H^1(\Omega;\R^2)'$.
\end{theorem}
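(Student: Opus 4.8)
The plan is to run the entropy method of Theorem~\ref{thm.ex} with the entropy density \eqref{1.hq}, compensating for the failure of Hypothesis~H2$'$ by an explicit analysis of the degenerate dissipation. First I would check Hypothesis~H1: from $h$ in \eqref{1.hq} one computes $w=Dh(u)=(\log u_1-\log q(u_3),\,\log u_2-\log q(u_3))$, and solving $u_i=q(u_3)e^{w_i}$ together with $u_3=1-u_1-u_2$ reduces to the scalar equation $u_3+q(u_3)(e^{w_1}+e^{w_2})=1$, whose left-hand side is strictly increasing in $u_3\in[0,1]$ from $0$ to $1+q(1)(e^{w_1}+e^{w_2})>1$ because $q(0)=0$ and $q$ is nondecreasing; hence $Dh:D\to\R^2$ is bijective, $h\in C^2(D)$ is convex, and $u=(Dh)^{-1}(w)\in D$ for every $w\in\R^2$. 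Next I would establish the entropy‑dissipation \emph{identity} obtained by completing the square: writing $q=q(u_3)$, $q'=q'(u_3)$ and $X_i:=\na u_i-(u_iq'/q)\na u_3$,
\[
  \na u^\top D^2h(u)A(u)\na u = \frac{q}{u_1}|X_1|^2 + \beta\frac{q}{u_2}|X_2|^2 ,
\]
which is manifestly nonnegative (so Hypothesis~H2 holds) but degenerates as $u_3\to 0$. I would also record the flux identity $(A(u)\na u)_i=\beta_i\big(q^{1/2}\na(q^{1/2}u_i)-3q^{1/2}u_i\na q^{1/2}\big)$ with $\beta_1=1$, $\beta_2=\beta$, and the product rule $\na(q^{1/2}u_i)=q^{1/2}X_i+3u_i\na q^{1/2}$.

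Second, I would build approximate solutions exactly as in Step~1 of the proof of Theorem~\ref{thm.ex}: semi-discretize in time with step $\tau$ and regularize by $\eps(\sum_{|\alpha|=m}(-1)^mD^{2\alpha}+1)$ with $H^m(\Omega)\hookrightarrow L^\infty(\Omega)$. Since $f=0$ the condition in H3 is trivial and $A$ is continuous on $\overline D$, so Lemma~\ref{lem.ex} (which needs only H1, H2, H3) furnishes solutions $w^{(\tau)}\in H^m$ with $u^{(\tau)}=(Dh)^{-1}(w^{(\tau)})\in D$ — whence $0\le u_1^{(\tau)},u_2^{(\tau)},u_3^{(\tau)}\le 1$ automatically, with no maximum principle — together with the discrete entropy inequality \eqref{a.edi}. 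Summing \eqref{a.edi} over the time steps and inserting the dissipation identity yields the uniform estimate $\int_0^T\!\!\int_\Omega\big(\tfrac{q(u_3^{(\tau)})}{u_1^{(\tau)}}|X_1^{(\tau)}|^2+\beta\tfrac{q(u_3^{(\tau)})}{u_2^{(\tau)}}|X_2^{(\tau)}|^2\big)dx\,dt\le C$.

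Third come the genuine a priori estimates, uniform in $(\tau,\eps)$. Using $u_i\le 1$, the above controls $\int\!\int q|X_i^{(\tau)}|^2$. To bound $\na q(u_3^{(\tau)})^{1/2}$ I would solve $X_1^{(\tau)}+X_2^{(\tau)}=-\na u_3^{(\tau)}\,(q+(1-u_3^{(\tau)})q')/q$ for $\na u_3^{(\tau)}$ and use the elementary inequality $q(y)+(1-y)q'(y)\ge\tfrac12 q'(y)$ on $[0,1]$ — which follows from $\tfrac12 yq'(y)\le q(y)$ and $q\ge 0$ by splitting at $y=\tfrac12$, and is exactly where assumption \eqref{se.assq} enters — to get $|\na q(u_3^{(\tau)})^{1/2}|^2=\tfrac{q'^2}{4q}|\na u_3^{(\tau)}|^2\le q|X_1^{(\tau)}+X_2^{(\tau)}|^2\le 2q(|X_1^{(\tau)}|^2+|X_2^{(\tau)}|^2)$. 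Integrating, and combining with $\na(q^{1/2}u_i^{(\tau)})=q^{1/2}X_i^{(\tau)}+3u_i^{(\tau)}\na q^{1/2}$, gives $\|q(u_3^{(\tau)})^{1/2}\|_{L^2(0,T;H^1)}+\|q(u_3^{(\tau)})^{1/2}u_i^{(\tau)}\|_{L^2(0,T;H^1)}\le C$. The flux identity then bounds $A(u^{(\tau)})\na u^{(\tau)}$ in $L^2(0,T;L^2)$ (the $L^\infty$ factor $q^{1/2}$ times $L^2$ gradients), and \eqref{a.reg} yields $\|\tau^{-1}(u^{(\tau)}-\sigma_\tau u^{(\tau)})\|_{L^2(\tau,T;H^1(\Omega)')}\le C$ after first letting $\eps\to 0$, as in Theorem~\ref{thm.ex}.

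Finally, I would pass to the limit $(\tau,\eps)\to 0$. Because the dissipation degenerates, $\na u_i^{(\tau)}$ itself is never controlled, so the classical Aubin–Lions lemma is unavailable; instead I would invoke the variant of the Aubin compactness lemma from Appendix~\ref{sec.aubin}, applied to the $L^\infty$-bounded family $u^{(\tau)}$ with the bounds on $\na q(u_3^{(\tau)})^{1/2}$, $\na(q(u_3^{(\tau)})^{1/2}u_i^{(\tau)})$ and on the discrete time derivative, to deduce $u^{(\tau)}\to u$ a.e.\ in $\Omega\times(0,T)$; the $L^\infty$ bounds and dominated convergence upgrade this to $L^p$ for all $p<\infty$, so $q(u_3^{(\tau)})^{1/2}\to q(u_3)^{1/2}$ strongly in $L^p$ while $\na q(u_3^{(\tau)})^{1/2}$ and $\na(q(u_3^{(\tau)})^{1/2}u_i^{(\tau)})$ converge weakly in $L^2$. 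Passing to the limit (strong times weak) in the reformulated equation gives \eqref{1.qw}; the $\eps$-terms vanish by the second bound in \eqref{a.entest}; and the initial datum is recovered by the linear-interpolation argument from the proof of Theorem~\ref{thm.ex}. I expect the main obstacle to be precisely this compactness step, together with the extraction of the $q(u_3)^{1/2}$-gradient bound from the degenerate dissipation — the latter being where the structural hypothesis \eqref{se.assq} is indispensable.
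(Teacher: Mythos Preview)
Your entropy-dissipation \emph{identity} $\na u^\top D^2h(u)A(u)\na u = \tfrac{q}{u_1}|X_1|^2 + \beta\tfrac{q}{u_2}|X_2|^2$ is correct and is a pleasant alternative to the paper's Lemma~\ref{lem.pd1}, which instead proves the \emph{lower bound}
\[
  z^\top (D^2h)A\,z \;\ge\; \frac{q(u_3)}{u_1}z_1^2 + \frac{q(u_3)}{u_2}z_2^2
	+ \frac{q'(u_3)^2}{q(u_3)}(z_1+z_2)^2
\]
via the splitting $(D^2h)A=M_0+(\beta-1)M_1$ (after reducing to $\beta\ge 1$). The paper's third term yields the bound on $\na q(u_3)^{1/2}$ directly, whereas you recover it from $X_1+X_2$; both routes work for that estimate.

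The gap is in the compactness step. You never use the second inequality in \eqref{se.assq}, $q(y)\le\kappa q'(y)^2$, and consequently you never derive an $L^2(0,T;H^1(\Omega))$ bound on $u_3^{(\tau)}$ itself, only on $q(u_3^{(\tau)})^{1/2}$. But Lemma~\ref{lem.comp} requires as a \emph{hypothesis} that $y_\tau=q(u_3^{(\tau)})^{1/2}$ be relatively compact in $L^2(0,T;L^2(\Omega))$; an $L^2(0,T;H^1)$ bound on $y_\tau$ alone gives only spatial compactness, not time compactness. In the paper this is obtained from $|\na u_3|^2\le\kappa\,(q'^2/q)|\na u_3|^2=4\kappa|\na q(u_3)^{1/2}|^2$, hence $u_3^{(\tau)}\in L^2(0,T;H^1)$, and then the standard discrete Aubin lemma (using $\tau^{-1}(u_3^{(\tau)}-\sigma_\tau u_3^{(\tau)})\in L^2(H^1(\Omega)')$) yields $u_3^{(\tau)}\to u_3$ strongly, whence $q(u_3^{(\tau)})^{1/2}$ is relatively compact. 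This is exactly why the paper passes to the limit in two separate steps ($\eps\to 0$ then $\tau\to 0$): Lemma~\ref{lem.comp} needs the discrete time derivative in $H^1(\Omega)'$, not $H^m(\Omega)'$.

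A second, related issue: Lemma~\ref{lem.comp} does \emph{not} deliver ``$u^{(\tau)}\to u$ a.e.''\ as you claim; its conclusion is strong convergence of the product $y_\tau z_\tau=q(u_3^{(\tau)})^{1/2}u_i^{(\tau)}$. Pointwise convergence of $u_i^{(\tau)}$ is never obtained in the paper (and is not needed): the weak formulation \eqref{1.qw} involves only $q(u_3)^{1/2}$, $q(u_3)^{1/2}u_i$ and their gradients, so the strong convergence of $q(u_3^{(\tau)})^{1/2}$ and $q(u_3^{(\tau)})^{1/2}u_i^{(\tau)}$ in $L^p$ together with the weak $L^2$ convergence of their gradients suffices to pass to the limit.
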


\begin{proof}
First we observe that we may assume that $\beta\ge 1$
since otherwise we rescale the equations by $t\mapsto \beta t$, which removes
this factor in the second equation but yields 
the factor $1/\beta>1$ in the first equation.

{\em Step 1: Verification of Assumptions H1-H2.}
We claim that Assumptions H1 and H2 are satisfied for the entropy density
\begin{equation*}
  h(u) = u_1(\log u_1-1) + u_2(\log u_2-1) + \int_{1/2}^{u_3}\log q(s)ds
\end{equation*}
for $u\in D=\{(u_1,u_2)\in(0,1)^2:u_1+u_2<1\}$. Indeed, the 
Hessian $D^2h$ is positive definite on $D$.
Therefore, $h$ is strictly convex. We claim that $Dh:D\to\R^2$ is invertible.
For this, let $(w_1,w_2)\in\R^2$ and define $g(y)=(e^{w_1}+e^{w_2})q(1-y)$
for $0<y<1$. Since $q$ is nondecreasing, $g$ is nonincreasing. Furthermore,
$g(0)>0$ and $g(1)=0$ using $q(0)=0$. By continuity, 
there exists a unique fixed point $0<y_0<1$ such that $g(y_0)=y_0$. Then we define
$u_i=e^{w_i}q(1-y_0)>0$ ($i=1,2$) which satisfies $u_1+u_2=g(y_0)=y_0<1$.
Consequently, $u=(u_1,u_2)\in D$.
We set $u_3=1-u_1-u_2=1-y_0$ which gives $w_i=\log(u_i/q(u_3))=(\pa h/\pa u_i)(u)$.
Hence, Assumption H1 is satisfied.

To verify Assumption H2, we show the following lemma.

\begin{lemma}[Positive definiteness of $(D^2h)A$]\label{lem.pd1}
The matrix $(D^2h)A$ is positive semi-defi\-nite. Moreover, if
$yq'(y)\le 2q(y)$ holds for all $0\le y\le 1$ (see \eqref{se.assq}), it holds 
\begin{equation}\label{se.M}
  z^\top (D^2h)A z \ge \frac{q(u_3)}{u_1}z_1^2 + \frac{q(u_3)}{u_2}z_2^2
	+ \frac{q'(u_3)^2}{q(u_3)}(z_1+z_2)^2 \quad \mbox{for }z=(z_1,z_2)^\top\in\R^2.
\end{equation}
\end{lemma}

\begin{proof}
Set $(D^2h)A=M_0+(\beta-1)M_1$, where
\begin{align*}
  M_0 &= \begin{pmatrix} q(u_3)/u_1 & 0 \\ 0 & q(u_3)/u_2 \end{pmatrix}
	+ \frac{q'(u_3)}{q(u_3)}\big(2q(u_3)+(u_1+u_2)q'(u_3)\big)
	\begin{pmatrix} 1 & 1 \\ 1 & 1 \end{pmatrix}, \\
  M_1 &= \begin{pmatrix} 0 & q'(u_3) \\ q'(u_3) & q(u_3)/u_2 + 2q'(u_3) \end{pmatrix}
	+ \frac{q'(u_3)^2u_2}{q(u_3)}\begin{pmatrix} 1 & 1 \\ 1 & 1 \end{pmatrix}.
\end{align*}
A straightforward computation shows that 
\begin{equation*}
\begin{aligned}
  z^\top M_0z &= \frac{q(u_3)}{u_1}z_1^2 + \frac{q(u_3)}{u_2}z_2^2
	+ \frac{q'(u_3)}{q(u_3)}\big(2q(u_3)+(1-u_3)q'(u_3)\big)(z_1+z_2)^2, \\
  z^\top M_1z &= u_2q(u_3)\left(\frac{q'(u_3)}{q(u_3)}z_1 + 
	\left(\frac{1}{u_2}+\frac{q'(u_3)}{q(u_3)}\right)z_2\right)^2 \ge 0
\end{aligned}
\end{equation*}
for all $z=(z_1,z_2)^\top\in\R^2$. If $yq'(y)\le 2q(y)$ holds,
we find that $2q(u_3)+(1-u_3)q'(u_3)\ge q'(u_3)$ from which we infer the result.
\end{proof}

{\em Step 2: A priori estimates.}
By Lemma \ref{lem.ex}, there exists a sequence $(w^k)$ of weak solutions
to \eqref{a.reg} satisfying the entropy-dissipation inequality
\eqref{a.edi} with $C_f=0$. Taking into account the identity
$B(w^k)\na w^k=A(u^k)\na u^k$, where $u^k=u(w^k)$, $w^k$ is a solution to 
\begin{align}
  \frac{1}{\tau}\int_\Omega(u^k-u^{k-1})\cdot\phi dx 
	&+ \int_\Omega\na\phi:A(u^k)\na u^kdx \nonumber \\
  &{}+ \eps\int_\Omega\bigg(\sum_{|\alpha|=m}D^\alpha w^k\cdot D^\alpha\phi
	+ w^k\cdot\phi\bigg)dx = 0
	\label{se.epsw}
\end{align}
for all $\phi\in H^m(\Omega;\R^2)$. Furthermore, because of
$$
  \na w^k:B(w^k)\na w^k=\na u^k:(D^2 h)(u^k)A(u^k)\na u^k
$$ 
and \eqref{se.M}, the 
discrete entropy inequality \eqref{a.edi} can be written as
\begin{align}
  \int_\Omega h(u^k)dx &+ 4\tau\int_\Omega q(u_3^k)\sum_{i=1}^2
	|\na(u_i^k)^{1/2}|^2 dx 
	+ 4\tau\int_\Omega |\na q(u_3^k)^{1/2}|^2 dx \nonumber \\
	&{}+ \eps\tau\int_\Omega
	\bigg(\sum_{|\alpha|=m}|D^\alpha w^k|^2 + |w^k|^2\bigg)dx
	\le \int_\Omega h(u^{k-1})dx, \label{se.edi0}
\end{align}
where $u^k=(u_1^k,u_2^k)$ and $u_3^k=1-u_1^k-u_2^k$. Resolving the recursion,
we infer that
\begin{align}
  \int_\Omega h(u^k)dx
	&+ 4\tau\sum_{j=1}^k\int_\Omega q(u_3^j)\sum_{i=1}^2|\na(u_i^j)^{1/2}|^2 dx
	+ 4\tau\sum_{j=1}^k\int_\Omega |\na q(u_3^j)^{1/2}|^2 dx \nonumber \\
	&{}+ \eps\tau\sum_{j=1}^k\int_\Omega
	\bigg(\sum_{|\alpha|=m}|D^\alpha w^j|^2 + |w^j|^2\bigg)dx
	\le \int_\Omega h(u^{0})dx. \label{se.edi}
\end{align}
Using the generalized Poincar\'e inequality \cite[Chap. II.1.4]{Tem97}, we deduce that
\begin{equation}\label{se.wk}
  \eps\tau\sum_{j=1}^k\|w^j_i\|_{H^m(\Omega)}^2 \le C, \quad i=1,2.
\end{equation}
The assumption $q(y)\le\kappa q'(y)^2$ (see \eqref{se.assq}) 
implies an $H^1$ estimate for $u_3^k$:
\begin{align*}
  \tau\sum_{j=1}^k\int_\Omega|\na u_3^j|^2 dx
	\le \kappa\tau\sum_{j=1}^k\int_\Omega \frac{q'(u_3^j)^2}{q(u_3^j)}
	|\na u_3^j|^2 dx\,dt 
	= 4\kappa\tau\sum_{j=1}^k\int_\Omega|\na q(u_3^j)^{1/2}|^2 dx\,dt \le C.
\end{align*}
The boundedness of $(u_i^k)$ in $L^\infty$ yields an $L^2$ estimate
for $(u_i^k)$ and hence, also for $(u_3^k)$. Therefore, 
\begin{equation}\label{se.u3}
  \tau\sum_{j=1}^k\|u_3^j\|_{H^1(\Omega)}^2 \le C.
\end{equation}

We cannot perform the simultaneous limit $(\tau,\eps)\to 0$ as in the proof
of Theorem \ref{thm.ex} since we need a compactness result of the type of
Lemma \ref{lem.comp} (see Appendix \ref{sec.aubin})
in which the discrete time derivative is uniformly bounded in $H^1(\Omega)'$
and not in the larger space $H^m(\Omega)'$ with $m>d/2$. Therefore, we
pass to the limit in two steps.

{\em Step 3: Limit $\eps\to 0$.}
We fix $k\in\{1,\ldots,N\}$ and
set $u_i^{(\eps)}=u_i^k$ for $i=1,2,3$, $w_i^{(\eps)}=w_i^k$ for $i=1,2$,
and $u^{(\eps)}=(u_1^{(\eps)},u_2^{(\eps)})$. 
The uniform $L^\infty$ bounds for $(u_i^{(\eps)})$ and estimates
\eqref{se.wk}-\eqref{se.u3} as well as the compact embedding $H^1(\Omega)
\hookrightarrow L^2(\Omega)$ show that there exist subsequences which are
not relabeled such that, as $\eps\to 0$,
\begin{align*}
  u_i^{(\eps)} \rightharpoonup^* u_i &\quad\mbox{weakly* in }L^\infty(\Omega), 
	\ i=1,2, \\
	u_3^{(\eps)}\rightharpoonup u_3 &\quad\mbox{weakly in }H^1(\Omega), \\
	u_3^{(\eps)}\to u_3 &\quad\mbox{strongly in }L^2(\Omega), \\
  \eps w_i^{(\eps)}\to 0 &\quad\mbox{strongly in }H^m(\Omega).
\end{align*}
Moreover, since $u_3^{(\eps)}=1-u_1^{(\eps)}-u_2^{(\eps)}$, we find that
in the limit $\eps\to 0$,
$u_3=1-u_1-u_2$. Since $u_3^{(\eps)}\to u_3$ a.e.\ in $\Omega$ and $q$ is
continuous, we have $q(u_3^{(\eps)})^{1/2}\to q(u_3)^{1/2}$ a.e.\ in $\Omega$.
The sequence $(q(u_3^{(\eps)})^{1/2})$ is bounded in $L^\infty(\Omega)$. Therefore,
by dominated convergence, 
\begin{equation}\label{se.epsq}
  q(u_3^{(\eps)})^{1/2}\to q(u_3)^{1/2} \quad\mbox{strongly in }L^p(\Omega), 
	\ p<\infty.
\end{equation}
Thus, the $L^2$ bound for $(\na q(u_3^{(\eps)})^{1/2})$
from \eqref{se.edi} shows that, up to a subsequence,
\begin{equation}\label{se.epsnaq}
  \na \big(q(u_3^{(\eps)})^{1/2}\big)\rightharpoonup \na\big(q(u_3)^{1/2}\big)
	\quad\mbox{weakly in }
	L^2(\Omega).
\end{equation}
Furthermore, in view of
$$
  \na\big(q(u_3^{(\eps)})^{1/2}u_i^{(\eps)}\big)
	= u_i^{(\eps)}\na(q(u_3^{(\eps)})^{1/2})
	+ 2(u_i^{(\eps)})^{1/2}q(u_3^{(\eps)})^{1/2}\na\big((u_i^{(\eps)})^{1/2}\big),
$$
estimate \eqref{se.edi} and the $L^\infty$ bounds show that for $i=1,2$,
\begin{align}
  \int_\Omega\big|\na\big(q(u_3^{(\eps)})^{1/2}u_i^{(\eps)}\big)\big|^2 dx
	&\le 2\int_\Omega (u_i^{(\eps)})^2|\na(q(u_3^{(\eps)})^{1/2})|^2 dx 
	+ 8\int_\Omega u_i^{(\eps)}q(u_3^{(\eps)})|\na(u_i^{(\eps)})^{1/2}|^2 dx 
	\nonumber \\
	&\le C\int_\Omega |\na(q(u_3^{(\eps)})^{1/2})|^2 dx 
	+ C\int_\Omega q(u_3^{(\eps)})|\na(u_i^{(\eps)})^{1/2}|^2 dx \le C. 
	\label{se.qu}
\end{align}
It follows that
\begin{equation}\label{se.epsqu}
  \|q(u_3^{(\eps)})^{1/2}u_i^{(\eps)}\|_{H^1(\Omega)} \le C.
\end{equation}
We conclude,
again up to a subsequence, that $q(u_3^{(\eps)})^{1/2}u_i^{(\eps)}$ 
converges weakly in $H^1(\Omega)$ and, because of the compact embedding
$H^1(\Omega)\hookrightarrow L^2(\Omega)$, strongly
in $L^2(\Omega)$ (and in fact, in every $L^p(\Omega)$)
to some function $y\in H^1(\Omega)$. As $q(u_3^{(\eps)})^{1/2}
\to q(u_3)^{1/2}$ strongly in $L^p(\Omega)$ and $u_i^{(\eps)}\rightharpoonup u_i$
weakly in $L^p(\Omega)$ for all $p<\infty$, we obtain 
$q(u_3^{(\eps)})^{1/2}u_i^{(\eps)}\rightharpoonup q(u_3)^{1/2}u_i$ 
weakly in $L^p(\Omega)$. Consequently, $y=q(u_3)^{1/2}u_i$ and
\begin{align}
  q(u_3^{(\eps)})^{1/2}u_i^{(\eps)}\to q(u_3)^{1/2}u_i 
	&\quad\mbox{strongly in }L^p(\Omega), \ p<\infty, \label{se.epsqu2} \\
  \na\big(q(u_3^{(\eps)})^{1/2}u_i^{(\eps)}\big)\rightharpoonup
	\na\big(q(u_3)^{1/2}u_i\big) &\quad\mbox{weakly in }L^2(\Omega). \label{se.epsqu3}
\end{align}

We wish to pass to the limit $\eps\to 0$ in
\begin{align*}
  (A(u^{(\eps)})\na u^{(\eps)})_i
	&= q(u_3^{(\eps)})\na u_i^{(\eps)} + u_i^{(\eps)}q'(u_3^{(\eps)})
	\na(1-u_3^{(\eps)}) \\
	&= q(u_3^{(\eps)})^{1/2}\na\big(q(u_3^{(\eps)})^{1/2}u_i^{(\eps)}\big)
	-3q(u_3^{(\eps)})^{1/2}u_i^{(\eps)}\na\big(q(u_3^{(\eps)})^{1/2}\big).
\end{align*}
Taking into account \eqref{se.epsq} and \eqref{se.epsqu3}, the first summand converges
weakly in $L^1(\Omega)$ to $q(u_3)^{1/2}$ $\times\na(q(u_3)^{1/2}u_i)$. 
By \eqref{se.epsnaq}
and \eqref{se.epsqu2}, the second summand converges weakly in $L^1(\Omega)$
to $-3q(u_3)^{1/2}u_i\na(q(u_3)^{1/2})$. Thus, performing the limit $\eps\to 0$
in \eqref{se.epsw} and setting $u_i^k:=u_i$ ($i=1,2,3$), 
we find that $u=(u_1,u_2)$ solves
\begin{align*}
  \frac{1}{\tau}\int_\Omega(u^k-u^{k-1})\cdot\phi dx
	+ \sum_{i=1}^2\beta_i\int_\Omega q(u_3^k)^{1/2}
	\big(\na(q(u_3^k)^{1/2}u_i^k)-3u_i^k\na(q(u_3^k)^{1/2})\big)\cdot\na\phi_i dx
	= 0
\end{align*}
for all $\phi=(\phi_1,\phi_2)\in H^{m}(\Omega;\R^2)$, where we recall that
$\beta_1=1$ and $\beta_2=\beta$. 
By a density argument, this equation also holds for all
$\phi\in H^1(\Omega;\R^2)$. Because of the weak convergence \eqref{se.epsqu3}
and estimate \eqref{se.qu},
the limit $q(u_3^k)^{1/2}u_i^k$ satisfies the bound
\begin{equation}\label{se.tauqu}
  \tau\sum_{j=1}^k\|q(u_3^j)^{1/2}u_i^j\|_{H^1(\Omega)}^2 \le C,
\end{equation}
where $C>0$ depends on $T_k=k\tau$ but not on $\tau$. Furthermore, 
by \eqref{se.edi} and \eqref{se.u3},
\begin{equation}\label{se.tauu}
  \tau\sum_{j=1}^k\|u_3^j\|_{H^1(\Omega)}^2 \le C.
\end{equation}

{\em Step 4: Limit $\tau\to 0$.}
Let $u^{(\tau)}(x,t)=u^k(x)$ for $x\in\Omega$ and $t\in((k-1)\tau,k\tau]$, 
$k=1,\ldots,N$. At time $t=0$, we set $u^{(\tau)}(\cdot,0)=u^0$. 
Then $u^{(\tau)}$ solves the equation
\begin{align}
  \frac{1}{\tau}&\int_\tau^T\int_\Omega (u^{(\tau)}-\sigma_\tau u^{(\tau)})
	\phi dxdt \nonumber \\
	&{}+ \sum_{i=1}^2\beta_i\int_\tau^T\int_\Omega q(u_3^{(\tau)})^{1/2}
	\big(\na(q(u_3^{(\tau)})^{1/2}u_i^{(\tau)})
	-3u_i^{(\tau)}\na(q(u_3^{(\tau)})^{1/2})\big)\cdot\na\phi_i dxdt = 0
	\label{se.weaktau}
\end{align}
for all $\phi=(\phi_1,\phi_2)\in L^2(0,T;H^1(\Omega;\R^2))$ and 
estimates \eqref{se.tauqu} and \eqref{se.tauu} give
\begin{equation}\label{se.tau}
  \|q(u_3^{(\tau)})^{1/2}u_i^{(\tau)}\|_{L^2(0,T;H^1(\Omega))}
	+ \|u_3^{(\tau)}\|_{L^2(0,T;H^1(\Omega))} \le C, \quad i=1,2,
\end{equation}
where $C>0$ is independent of $\tau$.
Clearly, we still have the $L^\infty$ bounds:
$$
  \|u_i^{(\tau)}\|_{L^\infty(0,T;L^\infty(\Omega))}
	+ \|u_3^{(\tau)}\|_{L^\infty(0,T;L^\infty(\Omega))} \le C.
$$
We claim that the discrete time derivative of $u_3^{(\tau)}$ is also uniformly 
bounded. Indeed, since
\begin{align*}
  \int_0^T & \big\|q(u_3^{(\tau)})^{1/2}\big[\na(q(u_3^{(\tau)})^{1/2}u_i^{(\tau)})
	-3u_i^{(\tau)}\na(q(u_3^{(\tau)})^{1/2})\big]\big\|_{L^2(\Omega)}^2 dt \\
	&\le C\int_0^T\|\na(q(u_3^{(\tau)})^{1/2}u_i^{(\tau)})\|_{L^2(\Omega)}^2 dt
	+ C\int_0^T\|\na(q(u_3^{(\tau)})^{1/2})\|_{L^2(\Omega)}^2 dt \le C,
\end{align*}
we find that
\begin{align}
  \tau^{-1}&\|u_i^{(\tau)}-\sigma_\tau u_i^{(\tau)}\|_{L^2(0,T;H^1(\Omega)')} 
	\nonumber \\
	&\le C\|q(u_3^{(\tau)})^{1/2}(\na(q(u_3^{(\tau)})^{1/2}u_i^{(\tau)})
	-3u_i^{(\tau)}\na(q(u_3^{(\tau)})^{1/2}))\|_{L^2(0,T;L^2(\Omega))}
	\le C, \label{se.ut}
\end{align}
which immediately yields 
\begin{equation}\label{se.u3t}
  \tau^{-1}\|u_3^{(\tau)}-\sigma_\tau u_3^{(\tau)}\|_{L^2(\tau,T;H^1(\Omega)')} \le C.
\end{equation}

Then \eqref{se.tau} and \eqref{se.u3t} allow us to apply the
Aubin lemma in the version of \cite{DrJu12} to conclude (up to a subsequence)
the convergence
$$
  u_3^{(\tau)} \to u_3 \quad\mbox{strongly in }L^2(0,T;L^2(\Omega))
$$
as $\tau\to 0$. Since $(q(u_3^{(\tau)})^{1/2})$ is bounded in $L^\infty$ and
$q(u_3^{(\tau)})^{1/2}\to q(u_3)^{1/2}$ a.e., the dominated 
convergence theorem implies that 
$$
  q(u_3^{(\tau)})^{1/2}\to q(u_3)^{1/2} \quad\mbox{strongly in }L^p(0,T;L^p(\Omega)),
	\quad p<\infty.
$$
In particular, $(q(u_3^{(\tau)})^{1/2})$ is relatively compact in 
$L^2(0,T;L^2(\Omega))$. Furthermore, by \eqref{se.tau} and \eqref{se.ut},
\begin{align*}
  q(u_3^{(\tau)})^{1/2} \rightharpoonup
	q(u_3)^{1/2} &\quad\mbox{weakly in }L^2(0,T;H^1(\Omega)), \\
	\tau^{-1}(u_i^{(\tau)}-\sigma_\tau u_i^{(\tau)})
	\rightharpoonup \pa_t u_i &\quad\mbox{weakly in }L^2(0,T;H^1(\Omega)'),
	\ i=1,2.
\end{align*}
Since $(u_i^{(\tau)})$ converges weakly in $L^p(0,T;L^p(\Omega))$ 
and $(q(u_3^{(\tau)})^{1/2})$ converges strongly in $L^p(0,T;$ $L^p(\Omega))$
for all $p<\infty$, we have
$$
  q(u_3^{(\tau)})^{1/2}u_i^{(\tau)} \rightharpoonup q(u_3)^{1/2}u_i
	\quad\mbox{weakly in }L^p(0,T;L^p(\Omega)), \ p<\infty.
$$
In fact, we can prove that this convergence is even strong. Indeed, 
applying Lemma \ref{lem.comp} in Appendix \ref{sec.aubin}
to $y_\tau=q(u_3^{(\tau)})^{1/2}$ and
$z_\tau=u_i^{(\tau)}$, we infer that, up to a subsequence,
$$
  q(u_3^{(\tau)})^{1/2}u_i^{(\tau)} \to q(u_3)^{1/2}u_i
	\quad\mbox{strongly in }L^p(0,T;L^p(\Omega)),\ p<\infty.
$$
The above convergence results show that
\begin{align*}
  q(u_3^{(\tau)})^{1/2} &
	\na(q(u_3^{(\tau)})^{1/2}u_i^{(\tau)})
	-3q(u_3^{(\tau)})^{1/2}u_i^{(\tau)}\na(q(u_3^{(\tau)})^{1/2})\big) \\
	&\rightharpoonup q(u_i)^{1/2}\na(q(u_3)^{1/3}u_i)
	- 3q(u_3)^{1/2}u_i\na(q(u_3)^{1/2})
\end{align*}
weakly in $L^1(0,T;L^1(\Omega))$. Thus, passing to the limit $\tau\to 0$
in \eqref{se.weaktau} yields \eqref{1.qw}. This finishes the proof.
\end{proof}

\begin{remark}\label{rem.f}\rm
We may allow for nonvanishing
reaction terms $f(u)$ in \eqref{1.eq} if $f$ depends linearly
on $u_1$ and $u_2$ and (possibly) nonlinearly on $u_3=1-u_1-u_2$, since
in the above proof $(u_1^{(\tau)})$ and $(u_2^{(\tau)})$ converge weakly
in $L^p$, whereas $(u_3^{(\tau)})$ converges strongly in $L^p$ ($p<\infty$).
\qed
\end{remark}

Functions $q(y)=y^s$ with $s>2$ can be also considered.

\begin{theorem}\label{thm.exq2}
Let $\beta>1$ and $q(y)=y^s$ with $s\ge 1$.
Let $u^0\in L^1(\Omega)$ with $0<u^0<1$ in $\Omega$.
Then there exists a weak solution to \eqref{1.eq}-\eqref{1.bic} satisfying 
the weak formulation \eqref{1.qw}, $u(0)=u^0$ in $H^1(\Omega;\R^2)'$, and 
\begin{align*}
  & 0\le u_i,u_3\le 1\mbox{ in }\Omega,\ t>0, \quad
	\pa_t u_i\in L^2(0,T;H^1(\Omega)'), \\
  & u_3^{\alpha/2},\ u_3^{\alpha/2}u_i\in L^2(0,T;H^1(\Omega)), \quad i=1,2.
\end{align*}
\end{theorem}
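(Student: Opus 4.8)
The plan is to run the time-discrete, $\eps$-regularized scheme of the proof of Theorem~\ref{thm.exq1} with the \emph{same} entropy density; the only place where the growth conditions~\eqref{se.assq} entered there was the sharp dissipation bound~\eqref{se.M} and the compactness of $u_3^{(\tau)}$, and both of these I would replace by arguments tailored to $q(y)=y^s$. So I keep
$$
  h(u) = u_1(\log u_1-1) + u_2(\log u_2-1) + \int_{1/2}^{u_3}\log q(y)\,dy + C_0 ,
	\qquad u=(u_1,u_2)\in D ,
$$
with $u_3=1-u_1-u_2$, $D=\{(u_1,u_2)\in(0,1)^2:u_1+u_2<1\}$, and $C_0>0$ chosen so that $h\ge0$ on $\overline D$. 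Hypothesis~H1 comes straight out of Step~1 of the proof of Theorem~\ref{thm.exq1}: that fixed-point argument only uses that $q$ is continuous, positive and nondecreasing on $(0,1)$ with $q(0)=0$, and it produces $(Dh)^{-1}(w)\in D$ for every $w\in\R^2$. Hypothesis~H2 is the unconditional first assertion of Lemma~\ref{lem.pd1}; and since $\beta>1$, in the decomposition $(D^2h)A=M_0+(\beta-1)M_1$ used there one has $(\beta-1)M_1\ge0$, so it is enough to work with $M_0$.

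The substitute for~\eqref{se.M} is a direct computation of $z^\top M_0z$ with $q(u_3)=u_3^s$, $q'(u_3)=su_3^{s-1}$, which yields
$$
  z^\top(D^2h)A\,z \ \ge\ \frac{u_3^s}{u_1}z_1^2 + \frac{u_3^s}{u_2}z_2^2
	+ s\,u_3^{s-2}\bigl(2u_3+s(1-u_3)\bigr)(z_1+z_2)^2 ,\qquad z\in\R^2 .
$$
Since $2u_3+s(1-u_3)\ge\min\{s,2\}\ge1$ on $[0,1]$ for every $s\ge1$, the last coefficient is bounded below by $s\,u_3^{s-2}$ --- a bound of the same shape as~\eqref{se.M}, valid for all $s\ge1$ but, for $s>2$, controlling only $\na(u_3^{s/2})$ in $L^2$ and not $\na u_3$. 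Inserting $z=\na u$ and $\na u_1+\na u_2=-\na u_3$, and using $u_3^s|\na u_i^{1/2}|^2=\tfrac14(u_3^s/u_i)|\na u_i|^2$ and $u_3^{s-2}|\na u_3|^2=(4/s^2)|\na(u_3^{s/2})|^2$, the discrete entropy inequality of Lemma~\ref{lem.ex} (which needs only H1, H2 and H3 with $C_f=0$) and the recursion as in~\eqref{se.edi} give, for the solutions $u^k$ of~\eqref{a.reg},
$$
  \tau\sum_{j}\int_\Omega\Bigl(u_3^s\,|\na(u_i^j)^{1/2}|^2+|\na(u_3^j)^{s/2}|^2\Bigr)dx \ \le\ C .
$$
Together with $0\le u_i^k,u_3^k\le1$ and $\na(u_3^{s/2}u_i)=u_i\na(u_3^{s/2})+u_3^{s/2}\na u_i$, this bounds $u_3^{s/2}$ and $u_3^{s/2}u_i$ in $L^2(0,T;H^1(\Omega))$; these are the functions $u_3^{\alpha/2}$, $u_3^{\alpha/2}u_i$ of the statement, with $\alpha=s$.

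For the time differences I would use that the flux identity underlying~\eqref{1.qw},
$$
  (A(u)\na u)_i = \beta_i\Bigl(q(u_3)^{1/2}\na\bigl(q(u_3)^{1/2}u_i\bigr)
	-3\,q(u_3)^{1/2}u_i\na\bigl(q(u_3)^{1/2}\bigr)\Bigr) ,\qquad i=1,2 ,
$$
holds for \emph{general} $q$ (it follows from $q^{1/2}\na(q^{1/2})=\tfrac12\na q$ and $\na u_1+\na u_2=-\na u_3$), so that, exactly as in~\eqref{se.ut}, $\tau^{-1}\|u_i^{(\tau)}-\sigma_\tau u_i^{(\tau)}\|_{L^2(0,T;H^1(\Omega)')}\le C$ for $i=1,2$ and hence for $u_3$. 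The limit $\eps\to0$ then goes through as in Step~3 of the proof of Theorem~\ref{thm.exq1}.

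The hard part will be the limit $\tau\to0$. For $s>2$ the family $(u_3^{(\tau)})$ is not bounded in $L^2(0,T;H^1(\Omega))$ --- only $((u_3^{(\tau)})^{s/2})$ is --- so the discrete Aubin lemma of~\cite{DrJu12} cannot be applied to $u_3^{(\tau)}$ as it was in the proof of Theorem~\ref{thm.exq1}. I would instead invoke a degenerate (nonlinear) version of the discrete Aubin--Lions lemma, of the type proved in Appendix~\ref{sec.aubin}, through the strictly increasing continuous map $y\mapsto y^{s/2}$: from $((u_3^{(\tau)})^{s/2})$ bounded in $L^2(0,T;H^1(\Omega))$, $0\le u_3^{(\tau)}\le1$, and $\tau^{-1}(u_3^{(\tau)}-\sigma_\tau u_3^{(\tau)})$ bounded in $L^2(0,T;H^1(\Omega)')$, one obtains that $(u_3^{(\tau)})^{s/2}$, and therefore $u_3^{(\tau)}$, is relatively compact in $L^2(0,T;L^2(\Omega))$, hence $u_3^{(\tau)}\to u_3$ a.e.\ and, being bounded, in every $L^p(0,T;L^p(\Omega))$ with $p<\infty$. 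Once $u_3^{(\tau)}$ is strongly convergent, the remaining arguments of Steps~3--4 of the proof of Theorem~\ref{thm.exq1} --- Lemma~\ref{lem.comp} applied to $q(u_3^{(\tau)})^{1/2}$ and $u_i^{(\tau)}$, and the weak convergence of $\na(q(u_3^{(\tau)})^{1/2})$ and of $\na(q(u_3^{(\tau)})^{1/2}u_i^{(\tau)})$ --- carry over unchanged, and passing to the limit in the time-discrete weak formulation yields~\eqref{1.qw} together with the stated regularity and $u(0)=u^0$ in $H^1(\Omega;\R^2)'$. I expect the single genuine obstacle to be formulating and proving the appropriate degenerate compactness statement; everything else is a transcription of the proof of Theorem~\ref{thm.exq1}.
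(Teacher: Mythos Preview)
Your proposal is correct and follows essentially the same line as the paper's proof: compute the dissipation for $q(y)=y^s$ directly, extract an $L^2(0,T;H^1(\Omega))$ bound on $(u_3^{(\tau)})^{s/2}$ (rather than on $u_3^{(\tau)}$ itself), and replace the standard discrete Aubin lemma by a nonlinear variant suited to the monotone map $y\mapsto y^{s/2}$. The ``degenerate compactness statement'' you anticipate having to prove is precisely \cite[Theorem~3]{CJL13}, which the paper invokes; so the obstacle you flag is already available in the literature and need not be developed from scratch. Note that Lemma~\ref{lem.comp} in Appendix~\ref{sec.aubin} serves a different purpose (compactness of the product $q(u_3^{(\tau)})^{1/2}u_i^{(\tau)}$) and is not the right reference for this step.

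One place where you are a bit too quick is the limit $\eps\to0$. Step~3 of the proof of Theorem~\ref{thm.exq1} relies on the bound $\|u_3^{(\eps)}\|_{H^1(\Omega)}\le C$ from~\eqref{se.u3}, which in turn used the second part of~\eqref{se.assq}; for $s>2$ you no longer have this, only $\|(u_3^{(\eps)})^{s/2}\|_{H^1(\Omega)}\le C$. The paper fills this gap via the Chavent--Jaffre lemma, which transfers the $H^1$ bound on $(u_3^{(\eps)})^{s/2}$ to a $W^{2/s,s}$ bound on $u_3^{(\eps)}$ and then uses the compact embedding $W^{2/s,s}(\Omega)\hookrightarrow L^s(\Omega)$. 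A more elementary route, consistent with your outline, is simply to observe that $(u_3^{(\eps)})^{s/2}$ is relatively compact in $L^2(\Omega)$ by $H^1\hookrightarrow L^2$, hence converges a.e.\ along a subsequence, and so does $u_3^{(\eps)}$ by continuity of $y\mapsto y^{2/s}$; boundedness then gives strong $L^p$ convergence. Either way, Step~3 does not go through verbatim and needs this small adjustment.
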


\begin{proof}
It is sufficient to consider the case $s>2$ since the case $s\le 2$ is contained
in Theorem \ref{thm.exq1}.
Assumptions H1 and H2 hold since this requires only $q(0)=0$ and $q'(s)\ge 0$.
By Lemma \ref{lem.ex}, there exists a sequence $(w^k)$ of weak solutions
to \eqref{a.reg} satisfying the entropy-dissipation inequality
\eqref{se.edi0}, which reads as
\begin{align*}
  \int_\Omega h(u^k)dx &+ 4\tau\int_\Omega (u_3^k)^s\sum_{i=1}^2
	|\na (u_i^k)^{1/2}|^2 dx 
	+ s\tau\int_\Omega((2-s)u_3^k+s)(u_3^k)^{s-2}|\na u_3^k|^2 dx \\
	&{}+ \eps\tau\int_\Omega\bigg(\sum_{|\alpha|=m}|D^\alpha w^k|^2 + |w^k|^2\bigg)dx
	\le \int_\Omega h(u^{k-1})dx.
\end{align*}
Observing that 
\begin{align*}
  ((2-s)u_3^k+s)(u_3^k)^{s-2}|\na u_3^k|^2
	&= \frac{4}{s^2}(2u_3^k+s(1-u_3^k))|\na (u_3^k)^{s/2}|^2 \\
	&\ge \frac{4}{s^2}\min\{2,s\}|\na(u_3^k)^{s/2}|^2,
\end{align*}
we infer after summation the uniform estimates
$$
  \tau\sum_{j=1}^k\sum_{i=1}^2\|(u_3^j)^{s/2}\na(u_i^j)^{1/2}
	\|_{L^2(\Omega)}^2
	+ \tau\sum_{j=1}^k\|(u_3^j)^{s/2}\|_{H^1(\Omega)}^2
	+ \eps\tau\sum_{j=1}^k\|w^j\|_{H^1(\Omega)}^2 \le C.
$$
Compared to the proof of Theorem \ref{thm.ex}, we cannot conclude an $H^1$ bound
for $u_3^k$ but only for $(u_3^k)^{s/2}$. Set $u_i^{(\eps)}=u_i^k$ for $i=1,2,3$.
The function $z\mapsto z^{2/s}$ for $z\ge 0$ is H\"older continuous since
$s>2$. Therefore, by the lemma of Chavent and Jaffre \cite[p.~141]{ChJa86},
$$
  \|u_3^{(\eps)}\|_{W^{2/s,s}(\Omega)}\le C\|(u_3^{(\eps)})^{s/2}\|_{H^1(\Omega)}
	\le C.
$$
Since the embedding $W^{2/s,s}(\Omega)\hookrightarrow L^s(\Omega)$ is compact,
we conclude the existence of a subsequence (not relabeled) such that
$$
  u_3^{(\eps)}\to u_3 \quad\mbox{strongly in }L^s(\Omega).
$$
At this point, we can proceed as in Step 3 of the proof of Theorem \ref{thm.ex}.

For the limit $\tau\to 0$, we need another compactness argument. Let
$u_i^{(\tau)}$ be defined as in the proof of Theorem \ref{thm.ex}.
We have the uniform estimates
\begin{align*}
  \|(u_3^{(\tau)})^{s/2}u_i^{(\tau)}\|_{L^2(0,T;H^1(\Omega))}
	+ \|(u_3^{(\tau)})^{s/2}\|_{L^2(0,T;H^1(\Omega))} &\le C, \\
  \tau^{-1}\|u_3^{(\tau)}-\sigma_\tau u_3^{(\tau)}\|_{L^2(0,T;H^1(\Omega)')} &\le C.
\end{align*}
Then, by the generalization of the Aubin lemma in \cite[Theorem 3]{CJL13},
we obtain, up to a subsequence, as $\tau\to 0$,
$$
  u_3^{(\tau)}\to u_3 \quad\mbox{strongly in }L^s(0,T;L^s(\Omega)).
$$
The remaining proof is exactly as Step 4 of the proof of Theorem \ref{thm.ex}.
\end{proof}

\begin{remark}[Generalization of Theorem \ref{thm.exq2}]\rm
Let $q\in C^1([0,1])$ be positive
and nondecreasing on $(0,1)$ such that there exist $0<\kappa_0<1$ and $\kappa_1>0$
such that for all $0\le z\le 1$,
$$
  2q(y)\ge (y-1+\kappa_0)q'(y)\quad\mbox{and either }q(y)\le\kappa_1 q'(y)^2\mbox{ or }
	q(y)\le\kappa_1 y^{2-s} q'(y)^2,
$$
where $s>2$. This includes $q(y)=y^s$ with $s>2$ ($0\le y\le 1$).
Indeed, let $\kappa_0=2/s<1$ and $\kappa_1=1/s^2$. 
Then $2q(y)\ge (y-1+\kappa_0)q'(y)$ is equivalent to $y\le 1$ 
which is true, and $q(y)=y^s\le\kappa_1 y^{2-s} q'(y)^2=y^s$.
The third term of the entropy-dissipation inequality \eqref{se.edi0} can be
estimated as
\begin{align*}
  \int_\Omega \frac{q'(u_3^k)}{q(u_3^k)}\big(2q(u_3^k)+(1-u_3^k)q'(u_3^k)\big)
	|\na u_3^k|^2 dx
	&\ge \kappa_0\int_\Omega\frac{q'(u_3^k)^2}{q(u_3^k)}|\na u_3^k|^2 dx \\
	&= 4\kappa_0\int_\Omega|\na q(u_3^k)^{1/2}|^2 dx,
\end{align*}
and we conclude the estimate
$$
  \tau\sum_{j=1}^k\|q(u_3^j)^{1/2}\|_{H^1(\Omega)} \le C.
$$
In case $q(y)\le\kappa_1 q'(y)^2$, it follows that
\begin{align*}
  \tau\sum_{j=1}^k\int_\Omega|\na u_3^j|^2 dxdt 
	&\le \kappa_1\tau\sum_{j=1}^k\int_\Omega \frac{q'(u_3^j)^2}{q(u_3^j)}
	|\na u_3^j|^2 dxdt \\
	&= 4\kappa_1\tau\sum_{j=1}^k\int_\Omega|\na q(u_3^j)^{1/2}|^2 dxdt \le C.
\end{align*}
In the other case $q(y)\le\kappa_1 y^{2-s}q'(y)^2$, we find that
$$
  \tau\sum_{j=1}^k\int_\Omega |\na (u_3^j)^{s/2}|^2 dxdt
	\le \frac{s^2}{4}\kappa_1\tau\sum_{j=1}^k\int_\Omega\frac{q'(u_3^j)^2}{q(u_3^j)}
	|\na u_3^j|^2 dxdt \le C.
$$
This allows us to apply the generalized Aubin lemma and we proceed as in the
above proof.
\qed
\end{remark}


\section{Proof of Theorem \ref{thm.exp}}\label{sec.thm3}

We prove a slightly more general result than stated in Theorem \ref{thm.exp}
by allowing for more general functions $p_1$ and $p_2$. We suppose that
\begin{equation}\label{ns.pq}
  p_i(u)=\alpha_{i0}+a_{i1}(u_1)+a_{i2}(u_2), \quad i=1,2,
\end{equation}
where $\alpha_{10}$, $\alpha_{20}>0$ are positive numbers, $a_{12}$, $a_{21}$ 
are continuously differentiable, and $a_{11}$, $a_{22}$ are continuous 
functions on $[0,\infty)$. Assume that there exist constants $\alpha_{ij}>0$ such that
\begin{equation}\label{ns.assab}
  a_{11}(y)\ge \alpha_{11} y^{s}, \quad a_{22}(y)\ge\alpha_{22} y^s, \quad
  a_{12}'(y)\ge\alpha_{12} y^{s-1}, \quad a_{21}'(y)\ge\alpha_{21} y^{s-1} 
\end{equation}
for all $y\ge 0$, where $1<s<4$, and that there exist
$C>0$ and $\sigma<2s(1+1/d)-1$ such that for all $u=(u_1,u_2)\in 
(0, \infty)^2$ and $i=1,2$, it holds that $p_i(u)\le C(1+|u_1|^\sigma+|u_2|^\sigma)$.

Since the entropy density $h(u)$, defined in \eqref{1.hp}, may not fulfill 
Hypothesis H1, we need to regularize:
$$
  h_\eps(u) = h(u) + \eps\big(u_1(\log u_1-1)+u_2(\log u_2-1)\big), \quad \eps>0.
$$
Then
$$
  Dh_\eps(u) = Dh(u) + \eps(\log u_1,\log u_2)^\top,
$$
and the range of $Dh_\eps$ is $\R^2$. The Hessian of $h_\eps$ equals
\begin{equation}\label{ns.Heps}
  H_\eps = D^2 h_\eps(u) = \begin{pmatrix}
	a_{21}'(u_1)/u_1 + \eps/u_1 & 0 \\ 0 & a_{12}'(u_2)/u_2 + \eps/u_2
	\end{pmatrix},
\end{equation}
showing that each component of $Dh_\eps$ is strictly increasing, and thus, 
$(Dh_\eps)^{-1}:\R^2\to D=(0,\infty)^2$ is well defined. Hence, $h_\eps$ fulfills 
Hypothesis H1. 
We also regularize the diffusion matrix by setting
\begin{equation}\label{ns.Aeps}
  A_\eps(u) = A(u)
	+ \eps\begin{pmatrix} u_2 & 0 \\ 0 & u_1 \end{pmatrix}.
\end{equation}
The entropy density is regularized similarly in \cite{DLM13} but
the diffusion matrix is regularized here in a different way.

{\em Step 1: Verification of Hypotheses H1 and H2.} Set $H=D^2h(u)$.
First, we show that $HA$ and $H_\eps A_\eps$ are positive definite
under additional conditions on the functions $a_{ij}$.

\begin{lemma}[Positive definiteness of $HA$ and $H_\eps A_\eps$]\label{lem.pd}
Let $H=D^2 h(u)$, where $h$ is defined by \eqref{1.hp}, and let $A=A(u)$ be given
by \eqref{1.Ap} with $p_i$ as in \eqref{ns.pq}.
If for some $s\ge 1$, $sa_{12}(u_2)\ge u_2a_{12}'(u_2)$, 
$sa_{21}(u_1)\ge u_1a_{21}'(u_1)$, and 
$a_{11}'(u_1)a_{22}'(u_2)\ge (1-1/s)a_{12}'(u_2)a_{21}'(u_1)$ 
for all $u_1$, $u_2\ge 0$, 
then $HA$ is positive definite and for all $z=(z_1,z_2)^\top\in\R^2$,
\begin{equation*}
  z^\top HA z \ge \frac{\alpha_{10}+a_{11}(u_1)}{u_1}a_{21}'(u_1) z_1^2 
	+ \frac{\alpha_{20}+a_{22}(u_2)}{u_2}a_{12}'(u_2) z_2^2.
\end{equation*}
Furthermore,
if additionally $4a_{21}(u_1)\ge u_1a_{21}'(u_1)\ge 0$ and 
$4a_{12}(u_2)\ge u_2 a_{12}'(u_2)\ge 0$ for all
$u_1$, $u_2\ge 0$, then $H_\eps A_\eps$ is positive definite and for all
$z=(z_1,z_2)^\top\in\R^2$,
$$
  z^\top H_\eps A_\eps z \ge z^\top HAz 
	+ \eps\left(\frac{\alpha_{10}+a_{11}(u_1)}{u_1}z_1^2
	+ \frac{\alpha_{20}+a_{22}(u_2)}{u_2}z_2^2\right).
$$
\end{lemma}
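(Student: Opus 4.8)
The plan is to establish both assertions by a direct computation of the relevant matrices followed by a reduction to the nonnegativity of two scalar $2\times2$ determinants. Since the entropy density \eqref{1.hp} separates into a function of $u_1$ plus a function of $u_2$, its Hessian is the diagonal matrix $H=D^2h(u)=\operatorname{diag}\!\big(a_{21}'(u_1)/u_1,\ a_{12}'(u_2)/u_2\big)$, and multiplying by $A=A(u)$ given by \eqref{1.Ap} with $p_i$ as in \eqref{ns.pq} produces a \emph{symmetric} matrix: one checks $(HA)_{12}=(HA)_{21}=a_{21}'(u_1)a_{12}'(u_2)$, $(HA)_{11}=\frac{a_{21}'(u_1)}{u_1}p_1(u)+a_{21}'(u_1)a_{11}'(u_1)$, and $(HA)_{22}=\frac{a_{12}'(u_2)}{u_2}p_2(u)+a_{12}'(u_2)a_{22}'(u_2)$. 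Inserting $p_1=\alpha_{10}+a_{11}(u_1)+a_{12}(u_2)$, $p_2=\alpha_{20}+a_{21}(u_1)+a_{22}(u_2)$, I split $z^\top HAz$ as the claimed lower bound plus a remainder form $R(z)$ with matrix
$$
  \begin{pmatrix}
  a_{21}'(u_1)\big(a_{12}(u_2)/u_1+a_{11}'(u_1)\big) & a_{21}'(u_1)a_{12}'(u_2) \\
  a_{21}'(u_1)a_{12}'(u_2) & a_{12}'(u_2)\big(a_{21}(u_1)/u_2+a_{22}'(u_2)\big)
  \end{pmatrix}.
$$
Its diagonal entries are nonnegative under the structural assumptions of this section (which force $a_{12},a_{21}\ge0$ and $a_{12}',a_{21}'\ge0$, and, in the cases of interest, $a_{11}',a_{22}'\ge0$), so $R\ge0$ will follow once the determinant is nonnegative, i.e.\ once, after cancelling the positive factor $a_{21}'(u_1)a_{12}'(u_2)$ (the case $a_{21}'a_{12}'=0$ being immediate),
$$
  \Big(\tfrac{a_{12}(u_2)}{u_1}+a_{11}'(u_1)\Big)\Big(\tfrac{a_{21}(u_1)}{u_2}+a_{22}'(u_2)\Big)\ \ge\ a_{21}'(u_1)a_{12}'(u_2).
$$

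This inequality is the crux of the first part. Writing $P:=a_{12}'(u_2)a_{21}'(u_1)$, the hypotheses $sa_{12}(u_2)\ge u_2a_{12}'(u_2)$ and $sa_{21}(u_1)\ge u_1a_{21}'(u_1)$ yield, once the arbitrary ratio $u_1/u_2$ cancels, both $\frac{a_{12}(u_2)}{u_1}\cdot\frac{a_{21}(u_1)}{u_2}\ge P/s^2$ and — by AM--GM applied to the mixed products $\frac{a_{12}(u_2)}{u_1}a_{22}'(u_2)$ and $a_{11}'(u_1)\frac{a_{21}(u_1)}{u_2}$, using the same two bounds — that their sum is $\ge\frac2s\sqrt{P\,a_{11}'(u_1)a_{22}'(u_2)}$. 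Expanding the product on the left and collecting these pieces together with the term $a_{11}'(u_1)a_{22}'(u_2)$ gives the lower bound $\big(\tfrac1s\sqrt P+\sqrt{a_{11}'(u_1)a_{22}'(u_2)}\,\big)^2$. Finally the third hypothesis $a_{11}'(u_1)a_{22}'(u_2)\ge(1-1/s)P$, together with $0<1-1/s\le1$ (so $1-1/s\ge(1-1/s)^2$), gives $\sqrt{a_{11}'(u_1)a_{22}'(u_2)}\ge(1-1/s)\sqrt P$, whence the square is $\ge\big(\tfrac1s+1-\tfrac1s\big)^2P=P$. This proves $R\ge0$ and the stated bound; positive definiteness of $HA$ follows since on $D=(0,\infty)^2$ the lower bound of $z^\top HAz$ has strictly positive coefficients.

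For $H_\eps A_\eps$, I use \eqref{ns.Heps}, \eqref{ns.Aeps} to write $H_\eps=H+\eps\operatorname{diag}(1/u_1,1/u_2)$, $A_\eps=A+\eps\operatorname{diag}(u_2,u_1)$, so $H_\eps A_\eps=HA+\eps\big(H\operatorname{diag}(u_2,u_1)+\operatorname{diag}(1/u_1,1/u_2)A\big)+\eps^2\operatorname{diag}(u_2/u_1,u_1/u_2)$. The $\eps^2$-term contributes $\frac{u_2}{u_1}z_1^2+\frac{u_1}{u_2}z_2^2\ge0$ and is discarded. A short computation shows the $\eps$-term has quadratic form $\big(\tfrac{u_2}{u_1}a_{21}'(u_1)+\tfrac{p_1}{u_1}+a_{11}'(u_1)\big)z_1^2+\big(a_{12}'(u_2)+a_{21}'(u_1)\big)z_1z_2+\big(\tfrac{u_1}{u_2}a_{12}'(u_2)+\tfrac{p_2}{u_2}+a_{22}'(u_2)\big)z_2^2$. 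Subtracting the target $\frac{\alpha_{10}+a_{11}(u_1)}{u_1}z_1^2+\frac{\alpha_{20}+a_{22}(u_2)}{u_2}z_2^2$ (using $p_1/u_1\ge(\alpha_{10}+a_{11}(u_1))/u_1+a_{12}(u_2)/u_1$ and symmetrically for $p_2$), dropping the nonnegative $a_{11}'(u_1)z_1^2$, $a_{22}'(u_2)z_2^2$, and invoking the extra hypotheses $4a_{12}(u_2)\ge u_2a_{12}'(u_2)$, $4a_{21}(u_1)\ge u_1a_{21}'(u_1)$, reduces the claim to the positive semi-definiteness of
$$
  \begin{pmatrix}
  \tfrac{u_2}{u_1}(b_2+b_1/4) & (b_1+b_2)/2 \\
  (b_1+b_2)/2 & \tfrac{u_1}{u_2}(b_1+b_2/4)
  \end{pmatrix},\qquad b_1:=a_{12}'(u_2)\ge0,\ \ b_2:=a_{21}'(u_1)\ge0.
$$
The $u$-ratios cancel in the determinant, which equals $(b_2+b_1/4)(b_1+b_2/4)-\tfrac14(b_1+b_2)^2=\tfrac9{16}b_1b_2\ge0$, while the diagonal is visibly nonnegative; hence the form is PSD, which gives the stated inequality, and combined with the first part, positive definiteness of $H_\eps A_\eps$. (The constant $4$ is precisely what keeps this determinant nonnegative; replacing it by a larger constant would fail, which is the origin of the restriction $s<4$ in Theorem \ref{thm.exp}.)

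I expect the scalar inequality in the second paragraph — extracting a perfect square from the one-sided bounds on $a_{12},a_{21}$ and the product bound on $a_{11}',a_{22}'$ — to be the only step that is not pure bookkeeping; everything else is differentiating \eqref{1.hp}, multiplying out two $2\times2$ matrices, and verifying two sign conditions.
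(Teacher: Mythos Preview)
Your argument is correct, but the route differs from the paper's. For the first assertion, the paper does not attack the determinant of your remainder $R$ directly; instead it first peels off the perfect square
\[
  \frac{1}{s}\,a_{12}'a_{21}'\Bigl(\sqrt{\tfrac{u_2}{u_1}}\,z_1+\sqrt{\tfrac{u_1}{u_2}}\,z_2\Bigr)^2,
\]
which absorbs a $(2/s)a_{12}'a_{21}'z_1z_2$ piece of the cross term. The diagonal contributions of this square are then cancelled using the one-sided bounds $sa_{12}\ge u_2a_{12}'$ and $sa_{21}\ge u_1a_{21}'$, and what remains is the smaller quadratic $a_{11}'a_{21}'z_1^2+2(1-1/s)a_{12}'a_{21}'z_1z_2+a_{12}'a_{22}'z_2^2$, whose positivity is essentially the third hypothesis. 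Your approach reaches the same conclusion by keeping the full cross term and proving the determinant inequality via AM--GM and the perfect-square identity $\bigl(\tfrac1s\sqrt{P}+\sqrt{a_{11}'a_{22}'}\bigr)^2$; this is a bit more work but equally valid, and has the virtue of making transparent that the hypothesis $a_{11}'a_{22}'\ge(1-1/s)P$ could in fact be weakened to $a_{11}'a_{22}'\ge(1-1/s)^2P$. For the second assertion the situation is parallel: the paper completes two squares with coefficients $\tfrac12 a_{21}'$ and $\tfrac12 a_{12}'$ to kill the entire cross term $(a_{12}'+a_{21}')z_1z_2$ at once, whereas you verify positive semi-definiteness of the residual $2\times2$ matrix by the determinant computation $(b_2+b_1/4)(b_1+b_2/4)-\tfrac14(b_1+b_2)^2=\tfrac{9}{16}b_1b_2$. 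Both approaches correctly identify the constant $4$ as sharp. One small caveat: your AM--GM step tacitly uses $a_{11}',a_{22}'\ge0$, which you flag as holding ``in the cases of interest''; the paper's square-completion also needs $a_{11}'a_{21}'\ge0$ for the leftover diagonal, so this is a shared implicit assumption (satisfied under \eqref{ns.assab}) rather than a defect of your method.
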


If $p_1$ and $p_2$ are given by \eqref{1.pp}, $HA$ is positive definite if $s\ge 1$, 
$\alpha_{11}\alpha_{22}\ge (1-1/s)\alpha_{12}\alpha_{21}$, and $H_\eps A_\eps$
is positive definite if additionally $s\le 4$. The proof also works
in the case $s<1$; in this situation the restriction 
$\alpha_{11}\alpha_{22}\ge (1-1/s)\alpha_{12}\alpha_{21}$ is not needed.

\begin{proof}
We have
$$
  HA = \begin{pmatrix}
	(p_1/u_1 + a_{11}')a_{21}' & a_{12}'a_{21}' \\
  a_{12}'a_{21}' & (p_2/u_2+a_{22}')a_{12}'
	\end{pmatrix}.
$$
Then, for $z=(z_1,z_2)^\top\in\R^2$, 
\begin{align*}
  z^\top HA z
	&= \frac{1}{s}a_{12}'a_{21}'\left(\sqrt{\frac{u_2}{u_1}}z_1+\sqrt{\frac{u_1}{u_2}}z_2
	\right)^2 + \left(\frac{\alpha_{10}+a_{11}}{u_1} 
	+\frac{1}{u_1}\left(a_{12}-\frac{1}{s}u_2a_{12}'\right)\right)a_{21}' z_1^2 \\
	&\phantom{xx}{}
	+ \left(\frac{\alpha_{20}+a_{22}}{u_2}
	+\frac{1}{u_2}\left(a_{21}-\frac{1}{s}u_1a_{21}'\right)
	\right)a_{12}' z_2^2 \\
	&\phantom{xx}{}
	+ 2\left(1-\frac{1}{s}\right)a_{21}'a_{12}' z_1z_2 + a_{11}'a_{21}'z_1^2 
	+ a_{12}'a_{22}'z_2^2.
\end{align*}
The last three terms are nonnegative for all $z_1$, $z_2\in\R$ if and only if
$a_{11}'a_{21}'\ge 0$ and $a_{12}'a_{22}'\ge (1-1/s)a_{21}'a_{12}'$. This shows the first statement
of the lemma. 

Next, we compute
\begin{align*}
  H_\eps A_\eps
	&= HA + \eps\begin{pmatrix} 1/u_1 & 0 \\ 0 & 1/u_2 \end{pmatrix}
	A(u) + \eps H\begin{pmatrix} u_2 & 0 \\ 0 & u_1 \end{pmatrix}
	+ \eps^2\begin{pmatrix} u_2/u_1 & 0 \\ 0 & u_1/u_2 \end{pmatrix} \\
	&= HA + \eps\begin{pmatrix} 
	p_1/u_1+a_{11}'+a_{21}'(u_2/u_1) & a_{12}' \\ a_{21}' & p_2/u_2+a_{22}'+a_{12}'(u_1/u_2) \end{pmatrix} \\
	&\phantom{xx}{}+ \eps^2\begin{pmatrix} u_2/u_1 & 0 \\ 0 & u_1/u_2 \end{pmatrix}.
\end{align*}
The first and the third matrix on the right-hand side are positive definite.
Therefore, we need to analyze only the second matrix, called $M$:
\begin{align*}
  z^\top Mz 
	&= \left(\frac{p_1}{u_1}+a_{11}'+a_{21}'\frac{u_2}{u_1}\right)z_1^2
	+ \left(\frac{p_2}{u_2}+a_{22}'+a_{12}'\frac{u_1}{u_2}\right)z_2^2
  + (a_{21}'+a_{12}')z_1z_2 \\
	&= \frac12 a_{21}'\left(\sqrt{\frac{2u_2}{u_1}}z_1+\sqrt{\frac{u_1}{2u_2}}z_2
	\right)^2 
	+ \frac12 a_{12}'\left(\sqrt{\frac{u_2}{2u_1}}z_1+\sqrt{\frac{2u_1}{u_2}}z_2\right)^2 \\
	&\phantom{xx}{}+ \left(\frac{\alpha_{10}+a_{11}+a_{12}}{u_1}+ a_{11}'+a_{21}'\frac{u_2}{u_1}
	- \left(a_{21}'+\frac14 a_{12}'\right)\frac{u_2}{u_1}\right)z_1^2 \\
	&\phantom{xx}{}+ \left(\frac{\alpha_{20}+a_{21}+a_{22}}{u_2}+ a_{22}'+a_{12}'\frac{u_1}{u_2} 
	- \left(\frac14 a_{21}'+a_{12}'\right)\frac{u_1}{u_2}\right)z_2^2 \\
	&\ge \left(\frac{\alpha_{10}+a_{11}}{u_1}+\frac{1}{u_1}\left(a_{12}-\frac14 u_2a_{12}'\right)
	\right)z_1^2 
	+ \left(\frac{\alpha_{20}+a_{22}}{u_2}+\frac{1}{u_2}\left(a_{21}-\frac14 u_1a_{21}'
	\right)\right)z_2^2 \\
	&\ge \frac{\alpha_{10}+a_{11}}{u_1}z_1^2 + \frac{\alpha_{20}+a_{22}}{u_2}z_2^2.
\end{align*}
This ends the proof.
\end{proof}

\begin{remark}\rm
The positive semi-definiteness of $HA$ can be proved by only assuming that
$\det A\ge 0$. Indeed, setting $HA=(c_{ij})$ and observing that $HA$ is
symmetric, for  $z=(z_1,z_2)^\top\in\R^2$, 
\begin{align*}
  z^\top HA z 
	&= c_{12}\left(\sqrt[4]{\frac{c_{11}}{c_{22}}}z_1+\sqrt[4]{\frac{c_{22}}{c_{11}}}z_2
	\right)^2 + \sqrt{\frac{c_{11}}{c_{22}}}
	\left(\sqrt{c_{11}c_{22}}-c_{12}\right)z_1^2 \\
	&\phantom{xx}{}
	+ \sqrt{\frac{c_{22}}{c_{11}}}\left(\sqrt{c_{11}c_{22}}-c_{12}\right)z_2^2 \\
	&\ge \left(\sqrt{c_{11}c_{22}}-c_{12}\right)\min\left\{\sqrt{\frac{c_{11}}{c_{22}}},
	\sqrt{\frac{c_{22}}{c_{11}}}\right\}|z|^2,
\end{align*}
and positive semi-definiteness follows if $c_{11}>0$, $c_{22}>0$, and 
$c_{11}c_{22}-c_{12}^2=(\sqrt{c_{11}c_{22}}+c_{12})(\sqrt{c_{11}c_{22}}-c_{12})\ge 0$. 
Now, $c_{11}\ge \alpha_{10}a_{21}'(u_1)/u_1>0$ for all $u_1>0$, $c_{22}>0$ 
for all $u_2>0$, and, setting $\pa_ip_j=\pa p_j/\pa u_i$,
$$
  c_{11}c_{22}-c_{12}^2 = \frac{a_{21}'a_{12}'}{u_1u_2}(p_1+u_1\pa_1p_1)(p_2+u_2\pa_2p_2)
	- (a_{21}'a_{21}')^2
	= \frac{a_{21}'a_{21}'}{u_1u_2}\det A(u),
$$
showing the claim.
\qed
\end{remark}

{\em Step 2: Solution of an approximate problem.}
Set $w=(Dh)^{-1}(u)$. Then $u=u(w)$. The matrix 
$B_\eps(w)=A_\eps(u(w))H_\eps^{-1}(u(w))$ writes as
$$
  B_\eps = \begin{pmatrix}
	u_1p_1/a_1'+u_1^2a_1'/a_{21}'+\eps u_1u_2/a_{21}' & u_1u_2 \\
	u_1u_2 & u_2p_2/a_2'+u_2^2 a_{22}'/a_2'+\eps u_1u_2/a_2'
	\end{pmatrix}.
$$
By Lemma \ref{lem.pd}, $B_\eps$ is positive semi-definite since
$z^\top B_\eps z = (H_\eps^{-1}z)^\top(H_\eps A_\eps)(H_\eps^{-1}z)\ge 0$.
In view of Lemma \ref{lem.ex}, there exists a weak solution $w^k$ to the
approximate problem \eqref{a.reg} satisfying the discrete entropy inequality
\eqref{a.edi} with $C_f=0$. 
Summing this inequality from $j=1,\ldots,k$ and employing the
identity $\na w^k:B_\eps(w^k)\na w^k=\na u^k:H_\eps A_\eps\na u^k$ and 
Lemma \ref{lem.pd}, we find that
\begin{align*}
  \int_\Omega h(u^k)dx &+ \tau\sum_{j=1}^k\int_\Omega\bigg(
	\frac{\alpha_{10}+a_{11}(u^j_1)}{u_1^j}a_{21}'(u_1^j)|\na u_1^j|^2
	+ \frac{\alpha_{20}+a_{22}(u^j_2)}{u_2^j}a_{12}'(u_2^j)|\na u_2^j|^2\bigg)dx \\
	&{}+ \eps\tau\sum_{j=1}^k\int_\Omega
	\bigg(\frac{\alpha_{10}+a_{11}(u_1^j)}{u_1^j}|\na u_1^j|^2
	+ \frac{\alpha_{20}+a_{22}(u_2^j)}{u_2^j}|\na u_2^j|^2\bigg)dx \\
	&{}+ \eps\tau\int_\Omega\bigg(\sum_{|\alpha|=m}|D^\alpha w|^2+|w|^2\bigg)dx
	\le \int_\Omega h(u^{0})dx.
\end{align*}

{\em Step 3: Uniform estimates.}
The growth conditions \eqref{ns.assab} and the above entropy estimate imply that
\begin{align*}
  \tau\sum_{j=1}^k\sum_{i=1}^2
	\int_\Omega\big((u_i^j)^{s-2}+(u_i^j)^{2(s-1)}\big)|\na u_i^j|^2 dx &\le C, \\
	\eps\tau\sum_{j=1}^k\sum_{i=1}^2\int_\Omega
	\big((u_i^j)^{-1}+(u_i^j)^{s-1}\big)|\na u_i^j|^2 dx &\le C.
\end{align*}
Furthermore, since \eqref{ns.assab} shows that $h(u)\ge C(u_1^s+u_2^s-1)$ 
for some $C>0$, the estimate
$$
  \int_\Omega \big((u_1^k)^s + (u_2^k)^s\big) dx \le \int_\Omega h(u^k)dx + C \le C
$$
yields a uniform bound for $u_i^k$ in $L^\infty(0,T;L^s(\Omega))$. 
Note that by the Poincar\'e inequality,
$$
  \|(u_i^k)^s\|_{L^2(0,T;L^2(\Omega))} \le C\|\na(u_i^k)^s\|_{L^2(0,T;L^2(\Omega))}
	+ C\|(u_i^k)^s\|_{L^2(0,T;L^1(\Omega))}\le C,
$$
which implies that $((u_i^k)^s)$ is bounded in $L^2(0,T;H^1(\Omega))$.
Thus, defining $u_i^{(\tau)}$ as in the proof of Theorem \ref{thm.ex},
we have proved the following uniform estimates:
\begin{align}
  \sum_{i=1}^2\|u_i^{(\tau)}\|_{L^\infty(0,T;L^s(\Omega))} &\le C, \label{ns.Ls} \\
	\sum_{i=1}^2\big(\|(u_i^{(\tau)})^{s/2}\|_{L^2(0,T;H^1(\Omega))} 
	+ \|(u_i^{(\tau)})^{s}\|_{L^2(0,T;H^1(\Omega))}\big) &\le C, \label{ns.uH1} \\
	\sqrt{\eps}\sum_{i=1}^2\big(\|(u_i^{(\tau)})^{1/2}\|_{L^2(0,T;H^1(\Omega))} 
	+ \|w_i^{(\tau)}\|_{L^2(0,T;H^m(\Omega))}\big) &\le C. \label{ns.w}
\end{align}

We also need an estimate for the time derivative. Let $r=2s(d+1)/(d(\sigma+1))>1$,
$1/r+1/r'=1$, and
$\phi\in L^{r'}(0,T;X)$, where $X=\{\phi\in W^{m,\infty}(\Omega):\na\phi\cdot\nu=0$ 
on $\pa\Omega\}$.
Then, observing that $\diver(A(u^{(\tau)})\na u^{(\tau)})_i
=\Delta(u_i^{(\tau)}p_i(u_i^{(\tau)}))$, an integration by parts gives
\begin{align*}
  \frac{1}{\tau} \left|\int_0^T\int_\Omega(u_i^{(\tau)}-\sigma_\tau u_i^{(\tau)})
	\phi dxdt\right| 
	&\le \left|\int_0^T\int_\Omega u_i^{(\tau)}p_i(u_i^{(\tau)})\Delta\phi dxdt\right| \\
	&\phantom{xx}{}+ \eps\int_0^T\|w_i^{(\tau)}\|_{H^m(\Omega)}\|\phi\|_{H^m(\Omega)}dt
	\\ 
	&\le \|u_i^{(\tau)}p_i(u^{(\tau)})\|_{L^r(0,T;L^1(\Omega))}
	\|\phi\|_{L^{r'}(0,T;W^{2,\infty}(\Omega))} \\
	&\phantom{xx}{}
	+ \eps\|w_i^{(\tau)}\|_{L^2(0,T;H^m(\Omega))}\|\phi\|_{L^2(0,T;H^m(\Omega))}.
\end{align*}
We estimate the norm of $u_i^{(\tau)}p_i(u^{(\tau)})$. We infer from the
Gagliardo-Nirenberg inequality that $L^2(0,T;H^1(\Omega))\cap
L^\infty(0,T;L^1(\Omega))\hookrightarrow L^{2+2/d}(\Omega_T)$
and thus, by \eqref{ns.Ls} and \eqref{ns.uH1},
$((u_i^{(\tau)})^s)$ is bounded in $L^{2+2/d}(\Omega_T)$, where 
$\Omega_T=\Omega\times(0,T)$. The growth condition on $p_i$ gives the estimate
\begin{align}
  \|u_i^{(\tau)}p_i(u^{(\tau)})\|_{L^r(\Omega_T)}
	&\le C\bigg(1+\sum_{i=1}^2\|u_i^{(\tau)}\|^{\sigma+1}_{L^{(\sigma+1)r}(\Omega_T)}
	\bigg) \nonumber \\
	&= C\bigg(1+\sum_{i=1}^2\|u_i^{(\tau)}\|_{L^{2s(d+1)/d}(\Omega_T)}^{\sigma+1}\bigg)
	\le C. \label{ns.ur}
\end{align}
We conclude that
$$
  \frac{1}{\tau} \left|\int_\tau^T\int_\Omega(u_i^{(\tau)}-\sigma_\tau u_i^{(\tau)})
	\phi dxdt\right| \le C\|\phi\|_{L^2(0,T;H^m(\Omega))}
$$
and
\begin{equation}\label{ns.ut}
  \tau^{-1}\|u^{(\tau)}-\sigma_\tau u^{(\tau)}\|_{L^r(\tau,T;X')} \le C.
\end{equation}

{\em Step 3: The limit $(\tau,\eps)\to 0$.}
By estimates \eqref{ns.uH1}, \eqref{ns.w}, and \eqref{ns.ut}, 
there exists a subsequence which is not relabeled such that, as $(\tau,\eps)\to 0$,
\begin{align*}
  (u_i^{(\tau)})^s \rightharpoonup y &\quad\mbox{weakly in }L^2(0,T;H^1(\Omega)), \\
	\tau^{-1}(u^{(\tau)}-\sigma_\tau u^{(\tau)})
	\rightharpoonup \pa_t u &\quad\mbox{weakly in }L^r(0,T;X'), \\
	\eps w^{(\tau)}\to 0 &\quad\mbox{strongly in }L^2(0,T;H^m(\Omega)).
\end{align*}
Estimates \eqref{ns.uH1} and \eqref{ns.ut} allow us also to apply the Aubin lemma
in the version of \cite{CJL13}, such that, for a subsequence,
$u^{(\tau)}\to u$ strongly in $L^{2s}(0,T;L^{2q}(\Omega))$ as $(\tau,\eps)\to 0$,
where $q\ge 2$ is such that $H^1(\Omega)\hookrightarrow L^q(\Omega)$.
Because of the uniform bound of $(u_i^{(\tau)})$ in $L^{2s(d+1)/d}(\Omega_T)$ and
the a.e.\ convergence of $(u_i^{(\tau)})$, we find that, up to a subsequence,
$u_i^{(\tau)}\to u_i$ strongly in $L^p(\Omega_T)$ for all $p<2s(d+1)/d$ and
in particular for $p=2s$. By \eqref{ns.uH1} and the above strong convergence, 
again up to a subsequence, 
$u_i p_i(u^{(\tau)})\to u_i p_i(u)$ a.e.\ in $\Omega_T$. 
Because of the bound \eqref{ns.ur}, the dominated convergence theorem shows that
$u_i^{(\tau)}p_i(u^{(\tau)})\to u_ip_i(u)$ strongly in $L^r(\Omega_T)$,

Now, let $\phi=(\phi_1,\phi_2)\in L^{r'}(0,T;X)^2$.
After an integration by parts, $u^{(\tau)}$ solves
\begin{align*}
  \int_0^T \langle\tau^{-1}&(\sigma_\tau u^{(\tau)}-u^{(\tau)}),\phi\rangle dt
	+ \sum_{i=1}^2\int_0^T\int_\Omega u_i^{(\tau)} p_i(u^{(\tau)})\Delta\phi_i dxdt \\
	&{}+ \eps\int_0^T\int_\Omega
	\bigg(\sum_{|\alpha|=m}D^\alpha w^{(\tau)}\cdot D^\alpha\phi
	+ w^{(\tau)}\cdot\phi\bigg)dxdt = 0. 
\end{align*}
The above convergence results are sufficient to pass to the limit $(\tau,\eps)\to 0$,
which yields
$$
  \int_0^T\langle\pa_t u,\phi\rangle dt
	+ \sum_{i=1}^2\int_0^T\int_\Omega u_ip_i(u)\Delta\phi dxdt 
	= 0. 
$$
This proves the theorem.


\section{Further results and open problems}\label{sec.open}

In this section, we discuss some further results and open problems related
to cross-diffusion systems and entropy methods.

\begin{enumerate}[(i)]
\item {\em Dirichlet boundary conditions:} 
Dirichlet boundary conditions may be treated under some conditions on
the nonlinearites. To see this, we assume that $u_i=u_{D,i}$ on $\pa\Omega$
for $i=1,\ldots,n$ and we set $w_D=Dh(u_D)$. 
For simplicity, we assume that $u_D=(u_{D,1},\ldots,u_{D,n})$ 
depends on the spatial variable only. Then, using the test function $w-w_D$
in \eqref{1.eq}, we obtain
\begin{align}
  \frac{d}{dt} & \int_\Omega(h(u)-w_D u)dx 
	+ \int_\Omega\na u:(D^2h)(u)A(u)\na u dx \nonumber \\
	&= \int_\Omega\na w_D:A(u)\na u dx + \int_\Omega f(u)\cdot Dh(u) dx
	- \int_\Omega f(u)\cdot w_D dx. \label{op.h}
\end{align}
If $h(u)$ grows superlinearly (such as $u_i\log u_i$) and $w_D$ is smooth, 
we may estimate the first integral on the left-hand side as
$\int_\Omega(h(u)-w_D u)dx\ge \frac12\int_\Omega h(u)dx - C$ for some constant
$C>0$. Under Hypotheses H2'
and H3, the second integral on the left-hand side and the second integral
on the right-hand side are estimated as follows:
\begin{align*}
  \int_\Omega\na u:(D^2h)(u)A(u)\na u dx 
	&\ge C\sum_{i=1}^n\int_\Omega|\na\widetilde\alpha_i(u_i)|^2 dx, \\
  \int_\Omega f(u)\cdot Dh(u) dx &\le C_f\int_\Omega(1+h(u))dx.
\end{align*}
Then, assuming that $A(u)$ and $f(u)$ are such that, for instance,
$$
  |A(u)\na u|\le \sum_{i=1}^n|\na\widetilde\alpha_i(u_i)|, \quad
	|f(u)|\le C(1+h(u)),
$$
we can estimate the remaining integrals on the right-hand side of \eqref{op.h}.
In a similar way, we may treat Robin boundary conditions by combining the
ideas for Dirichlet and homogeneous Neumann conditions.
\item {\em Uniqueness of weak solutions:} It is possible to prove the
uniqueness of weak solutions to \eqref{1.eq}-\eqref{1.bic} (with $f=0$) under the
assumption that $A(u)$ can be written as a gradient, i.e.\ $A(u)=\na\Phi(u)$
for some monotone function $\Phi:D\to\R^n$, by employing
the $H^{-1}$ method. Unfortunately, the monotonicity assumption is rather strong
since it requires that $\na\Phi(u)=A(u)$ is positive semi-definite.

Another uniqueness result can be obtained if we are able to write the
diffusion equation in terms of the entropy variable (see \eqref{1.eqw}),
for instance in the approximated setting. For this result, we need to suppose
that $B(w)=\na\Phi(w)$ and that $\Phi\circ Dh$ is monotone. 
Let $w^{(1)}$ and $w^{(2)}$ be two weak solutions 
to \eqref{1.bic}-\eqref{1.eqw} (with $f=0$) with the same initial data. 
Furthermore, let $v\in L^2(0,T;H^1(\Omega;\R^n))$
be the weak solution to $-\Delta v=u(w^{(1})-u(w^{(2)})$ in $\Omega$ and
$\na v\cdot\nu=0$ on $\pa\Omega$. Then
\begin{align*}
  \frac12\int_\Omega|\na v(t)|^2 dx 
  &= \int_0^T\langle \pa_t(u(w^{(1})-u(w^{(2)})),v\rangle dt \\
	&= -\int_0^T\int_\Omega\na(\Phi(w^{(1)})-\Phi(w^{(2)})):\na v dxdt \\
	&= -\int_0^T\int_\Omega(\Phi(w^{(1)})-\Phi(w^{(2)}))\cdot
	(u(w^{(1)})-u(w^{(2)}))dxdt \\
	&= -\int_0^T\int_\Omega\big(\Phi(Dh(u^{(1)}))-\Phi(Dh(u^{(2)}))\big)\cdot
	(u^{(1)}-u^{(2)})dxdt \le 0,
\end{align*}
by the monotonicity of $\Phi\circ Dh$, where $u^{(i)}=u(w^{(i)})$ for $i=1,2$.
This shows that $v=0$ and hence $u(w^{(1)})=u(w^{(2)})$. 
Uniqueness results under weaker conditions
are an open problem (at least to our best knowledge). In fact, it is well known
that this question is delicate since non-uniqueness is well known even 
for some scalar equations; see, e.g., \cite{HaWe82}.
\item {\em Quadratic reaction terms:} 
In this paper, our focus was rather on the diffusion part than on the
reaction part. The question is whether the results can be extended to 
diffusion systems with, say,
quadratic reaction rates as they arise in reversible chemistry. For instance,
a global existence result for the following problem is generally unknown:
$$
  \pa_t u - \diver(A(u)\na) = f(u)\quad \mbox{in }\Omega, 
	\quad f_i(u)=(-1)^i(u_1u_3-u_2u_4),
$$
for $i=1,2,3,4$, with boundary and initial conditions \eqref{1.bic}.
If $A(u)$ is the diagonal matrix $A=\mbox{diag}(d_1,\ldots,d_n)$
with smooth functions $d_i=d_i(x,t)\ge 0$, the global existence of weak
solutions was shown in \cite{DFPV07}. This result is based on the observation
that $h(u)=\sum_{i=1}^4 u_i(\log u_i-1)$ possesses an $L^2$ estimate. 
This is proved by using the duality method of M.~Pierre \cite{PiSc97}, and
the proof is based on the diagonal structure of $A$. A global existence result 
for a system modeling the more general reaction $A_1+A_2\rightleftharpoons A_3+A_4$
was given in \cite{CDF14} in the two-dimensional case, the three-dimensional case
being an open problem. Reaction-diffusion systems with
diffusivities $d_i$ depending on $u$ and quadratic rate functions were
recently analyzed in \cite{BoRo14}, but still only in the diagonal case.
\item {\em Long-time behavior of solutions:}
We expect that the long-time behavior of weak solutions to \eqref{1.eq}-\eqref{1.bic}
with $f=0$ can be proven under some additional conditions. For specific diffusion
matrices, the exponential decay has been already shown; 
see \cite{JuSt12,JuSt14}. The idea
is to estimate the entropy dissipation in terms of the entropy. For instance,
let $u_\infty=(u_{\infty,1},\ldots,u_{\infty,n})$ be a constant steady-state
to \eqref{1.eq}-\eqref{1.bic}. To simplify, we assume that the entropy is given by 
$$
  {\mathcal H}^*=\sum_{i=1}^n\int_\Omega u_i\log\frac{u_i}{u_\infty}dx,
$$
and that the entropy dissipation can be estimated from below according to
\begin{equation}\label{app.ed}
  \int_\Omega \na w:B(w)\na wdx 
	= \int_\Omega \na u:(D^2h)A(u)\na u dx \ge \lambda{\mathcal H}^*
\end{equation}
for some $\lambda>0$. Then the entropy-dissipation inequality \eqref{1.dHdt} becomes 
$$
  \frac{d{\mathcal H}}{dt} + \lambda{\mathcal H} \le 0,
$$
and Gronwall's lemma implies exponential convergence in terms of the
relative entropy ${\mathcal H}^*$. By the Csisz\'ar-Kullback inequality,
we conclude the exponential decay in the $L^1$ norm, $\|u(t)-u_\infty\|_{L^1(\Omega)}
\le C\exp(-\lambda t/2)$ for $t>0$. For details, we refer to \cite{CJMTU01}.

The main task is to derive the entropy-dissipation relation \eqref{app.ed}.
According to Hypothesis H2', we need to prove
$$
  \sum_{i=1}^n\int_\Omega|\na\widetilde\alpha_i(u_i)|^2 dx
	\ge \lambda\sum_{i=1}^n\int_\Omega u_i\log\frac{u_i}{u_\infty}dx.
$$
If $\widetilde\alpha_i(u_i)=u_i^{1/2}$ (for instance), this inequality is a
consequence of the logarithmic Sobolev inequality. 
For more general functions $\widetilde\alpha_i$ and power-type entropy densities, 
one may employ Beckner-type inequalities; see \cite[Lemma 2]{CJS13}.
The general case, however, is an open problem.
\item {\em Entropies for population models:} 
It is an open problem to find an entropy for the general
population model with diffusion matrix $A=(a_{ij})$ defined in \eqref{app.a}. 
Just a simple combination of the entropies for the systems analyzed in Theorems
\ref{thm.exq} and \ref{thm.exp}, i.e.\ summing \eqref{1.hq} and \eqref{1.hp},
seems to be not sufficient. It is also an open problem if an entropy
for the population system with matrix \eqref{1.Ap}, where $p_1$ and $p_2$
are given by \eqref{1.pp}, exists without restrictions
on the coefficients $\alpha_{ij}$ (except positivity).
\item {\em Gradient systems and geodesic convexity:} There might to be a relation
between our entropy formulation and the gradient structure for reaction-diffusion
systems developed by Liero and Mielke \cite{LiMi13} but no results are known
so far. Using purely differential 
methods, they have shown the geodesic $\lambda$-convexity for particular 
reaction-cross-diffusion systems. It is an open problem if such geometric properties
also hold for the examples presented in Section \ref{sec.ex}.
\item {\em Coupled PDE-ODE problems:} 
Coupled reaction-diffusion-ODE problems occur, for instance, in 
chemotaxis-haptotaxis systems modeling cancer invasion \cite{TaCu10}. 
Denoting by
$u_1$, $u_2$, $u_3$ the densities of the cancer cells, matrix-degrading enzymes,
extracellular matrix, respectively, the evolution is governed by \eqref{1.eq}
with the exemplary diffusion matrix
$$
  A(u) = \begin{pmatrix}
	a_{11}(u) & a_{12}(u) & a_{13}(u) \\ a_{21}(u) & 0 & 0 \\ 0 & 0 & 0
	\end{pmatrix}.
$$
It is an open problem whether these systems possess an entropy structure and 
whether our method can be extended to such problems.
One idea could be to reduce the problem to the subsystem $(u_1,u_2)$ and
to estimate terms involving $\na u_3$ directly by differentiating
the ODE for $u_3$. 
\item {\em Bounded weak solutions:} A challenging task is to determine
all diffusion systems whose weak solutions are bounded. As a first step in this
direction, we consider $n=2$ and diffusion matrices $A(u)=(a_{ij})$
depending linearly on the variables,
$$
  a_{ij}(u) = \alpha_{ij} + \beta_{ij}u_1 + \gamma_{ij}u_2, \quad i,j=1,2,
$$
where $\alpha_{ij}$, $\beta_{ij}$, $\gamma_{ij}\in\R$.
We wish to find conditions on the coefficients for which the logarithmic
entropy density \eqref{1.log} satisfies Hypothesis H2.  
Requiring that $B=A(D^2h)^{-1}$ is symmetric, we can fix some coefficients,
\begin{align*}
  & \alpha_{12} = \alpha_{21} = \gamma_{12} = \beta_{21} = 0, \quad 
	\beta_{22} = \beta_{11}-\gamma_{21}, \\
	&\gamma_{11} = \gamma_{22}-\beta_{12}, \quad 
	\gamma_{21} = \beta_{12}+\alpha_{22}-\alpha_{11}.
\end{align*}
The matrix $(c_{ij})=(D^2h)A$ is positive semi-definite if and only if
\begin{align}
  c_{11} &= \frac{1}{u_1(1-u_1-u_2)}\big((\beta_{12}-\gamma_{22})u_2^2 
	+ (-\beta_{11}+\beta_{12}+\alpha_{22}-\alpha_{11})u_1u_2 \nonumber \\
	&\phantom{xx}{}	+ \beta_{11}u_1
	+ (-\alpha_{11}-\beta_{12}+\gamma_{22})u_2 + \alpha_{11}\big) 
	\ge 0, \label{app.c1} \\
	\det(c_{ij}) &= \frac{1}{u_1u_2(1-u_1-u_2)}\big(
	\beta_{11}(\beta_{11}-\beta_{12}-\alpha_{22}+\alpha_{11})u_1^2 
	- \gamma_{22}(\beta_{12}-\gamma_{22})u_2^2 \nonumber \\
	&\phantom{xx}{}+ (\alpha_{11}\gamma_{22}-\alpha_{22}\gamma_{22}-\beta_{11}\beta_{12}
	+2\beta_{11}\gamma_{22}-\beta_{12}\gamma_{22})u_1u_2 \nonumber \\
	&\phantom{xx}{}
	+ (\alpha_{11}^2-\alpha_{11}\alpha_{22}+\alpha_{11}\beta_{11}-\alpha_{11}\beta_{12}
	+\alpha_{22}\beta_{11})u_1 \nonumber \\
	&\phantom{xx}{}
	+ (\alpha_{11}\gamma_{22}-\alpha_{22}\beta_{12}+\alpha_{22}\gamma_{22})u_2
	+ \alpha_{11}\alpha_{22}\big) \ge 0 \label{app.c2}
\end{align}
holds for all $u_1$, $u_2>0$ such that $u_1+u_2<1$, where admissible 
ranges for the remaining five parameters
$\alpha_{11}$, $\alpha_{22}$, $\beta_{11}$, $\beta_{12}$, $\gamma_{22}$ 
have to be determined. Examples are (i) the volume-filling model of
Burger et al.\ (see Section \ref{sec.vf}) and (ii) the model defined by \eqref{1.ks},
where
\begin{align*}
  \mbox{(i)}\ &\alpha_{11}=1,\ \alpha_{22}=\beta,\ \beta_{11}=0,\ \beta_{12}=1,\ 
	\gamma_{22}=0, \\
  \mbox{(ii)}\ &\alpha_{11}=1,\ \alpha_{22}=1,\ \beta_{11}=-1,\ \beta_{12}=-1,\
	\gamma_{22}=-1.
\end{align*}
Other examples can be easily constructed. For instance, for
$\alpha_{11}=\alpha_{22}=\beta_{11}=\beta_{12}=1$ we have
$$
  A(u) = \begin{pmatrix}
	1+u_1+(c_{22}-1)u_2 & u_1 \\ u_2 & 1+c_{22}v 
	\end{pmatrix},
$$
and $(c_{ij})=(D^2h)A$ is positive semi-definite if and only if
\begin{align*}
  c_{11} &= \frac{(1-u_2)(1+(c_{22}-1)u_2)+u_1}{u_1(1-u_1-u_2)}\ge 0\quad\mbox{and} \\
  \det(c_{ij}) &= (1+(c_{22}-1)u_2)(1+u_1+c_{22}u_2)\ge 0, 
\end{align*}
which is the case if $0\le c_{22}<\infty$. However, it seems to be difficult to solve
inequalities \eqref{app.c1}-\eqref{app.c2} in the general situation.
Possibly, techniques from quadratic optimization with inequality constraints 
and quantifier elimination may help.
\end{enumerate}


\begin{appendix}
\section{Relations to non-equilibrium thermodynamics}\label{sec.thermo}

We show that the entropy variable $w=Dh(u)$, defined in
Section \ref{sec.intro}, is strongly related to the chemical potentials of a fluid 
mixture and that the particular change of unknowns associated with the logarithmic
entropy density \eqref{1.log} is related to a special choice of the
thermodynamic activities.

We introduce first the thermodynamic setting.
Consider a fluid consisting of $N$ components with the same molar mass under
isobaric and isothermal conditions. We write $\rho_i$ instead of $u_i$ to denote
the mass density of the $i$th component. The evolution of the mass densities
is governed by the mass balance equations
\begin{equation}\label{app.rho}
  \pa_t\rho_i + \diver J_i = 0, \quad i=1,\ldots,N,
\end{equation}
where $J_i$ are the diffusion fluxes. 
We have assumed that the barycentric velocity vanishes and that
there are no chemical reactions. Furthermore, we assume for simplicity
that the total mass density is constant, $\sum_{j=1}^N\rho_j=1$.

Let $s(\rho)=s(\rho_1,\ldots,\rho_N)$ be the thermodynamic entropy of the system.
Then the chemical potentials $\mu_i$ are defined (in the isothermal case) by
$$
  \mu_i = -\frac{\pa s}{\pa\rho_i}, \quad i=1,\ldots,N,
$$
where here and in the following we set physical constants (like temperature)
equal to one. Neglecting also body forces and the (irreversible)
stress tensor, the diffusion fluxes $J_i$ can be written as 
(see \cite[Chapter IV, (15)]{deGMaz62} or \cite[Formula (170)]{BoDr14})
\begin{equation}\label{app.J}
  J_i = -\sum_{j=1}^{N-1}L_{ij}\na(\mu_j-\mu_N), \quad i=1,\ldots,N,
\end{equation}
where $L_{ij}$ are some diffusion coefficients such that $(L_{ij})$ is
positive definite. Once an explicit expression for the thermodynamic entropy 
is determined, equations \eqref{app.rho}-\eqref{app.J} are closed.

Now, we explain the relation of the entropy variables to the above setting.
Since $\rho_N=1-\sum_{j=1}^{N-1}\rho_j$, 
we may express the density of the last component
in terms of the others such that we can introduce
$$
  h(\rho_1,\ldots,\rho_{N-1}) 
	:= -s\bigg(\rho_1,\ldots,\rho_{N-1},1-\sum_{j=1}^{N-1}\rho_j\bigg),
$$
and in fact, $h$ corresponds to the (mathematical) entropy introduced in
Section \ref{sec.intro}. With this notation, the entropy variables become
$$
  w_i = \frac{\pa h}{\pa\rho_i}
	= -\frac{\pa s}{\pa\rho_i} + \frac{\pa s}{\rho_N}
	= -(\mu_i-\mu_N), \quad i=1,\ldots,N-1,
$$
which relates the entropy variables to the chemical potentials.
Moreover, comparing the flux vector $J=B(w)\na w$ from \eqref{1.eqw} with
\eqref{app.J}, we see that the diffusion matrix $B(w)$ coincides with $(L_{ij})$.

For the second statement, we recall that the chemical potentials of a mixture
of ideal gases can be formulated as
\begin{equation}\label{app.mu}
  \mu_i = \mu_i^0 + \log\rho_i, \quad i=1,\ldots,N,
\end{equation}
where $\mu_i^0$ is the Gibbs energy which generally depends on temperature and pressure.
Since we have supposed an isobaric, isothermal situation, $\mu_i^0$ is
constant and, for simplicity, we set $\mu_i^0=0$ for $i=1,\ldots,N$.
In order to model non-ideal gases, it is usual in thermodynamics to introduce
the thermodynamic activity $a_i$ and the activity coefficient $\gamma_i$ by
$$
  \mu_i = \mu_i^0 + \log a_i = \log a_i,  \quad\mbox{where } a_i = \gamma_i\rho_i.
$$
If $\gamma_i=1$, we recover the ideal-gas case. In the volume-filling case,
Fuhrmann \cite{Fuh14} has chosen $\gamma_i=1+\sum_{j=1}^{N-1}a_j$
for numerical purposes. Then
$$
  \rho_i = \frac{a_i}{\gamma_i} = \frac{a_i}{1+\sum_{j=1}^{N-1}a_j},
$$
and since $a_i=\exp(\mu_i)$, it follows that
$$
  \rho_i = \frac{e^{\mu_i}}{1+\sum_{j=1}^{N-1}e^{\mu_j}}, \quad i=1,\ldots,N-1.
$$
which corresponds to the inverse transformation \eqref{1.u} if we identify
$\mu_i$ with $w_i$.
This expression can be derived directly from \eqref{app.mu}.
Indeed, if $\mu_i^0=0$, we obtain $w_i=-(\mu_i-\mu_N)=-\log(\rho_i/\rho_N)$
with $\rho_N=1-\sum_{j=1}^{N-1}\rho_j$, and inverting these relations, we find that
$$
  \rho_i = \frac{e^{w_i}}{1+\sum_{j=1}^{N-1}e^{w_j}}, \quad i=1,\ldots,N-1.
$$


\section{Formal derivation of two-species population models}\label{sec.deriv}

We derive formally cross-diffusion systems 
from a master equation for a continuous-time, discrete-space random walk
in the macroscopic limit.
We consider only random walks on one-dimensional lattices but the derivation
extends to higher dimensions in a straightforward way. The lattice is
given by cells $x_i$ ($i\in{\mathbb Z}$) with the uniform cell distance $h>0$.
The densities of the populations
in the $i$th cell at time $t>0$ are denoted by $u_1(x_i,t)$ and $u_2(x_i,t)$, 
respectively. We assume that the population species $u_1$ and $u_2$ move 
from the $i$th cell into the neighboring $(i\pm 1)$th cells with transition rates 
$S_i^\pm$ and $T_i^\pm$, respectively. Then the master equations can be 
formulated as follows \cite{Ost11}: 
\begin{align}
  \pa_t u_1(x_i) &= S_{i-1}^+ u_1(x_{i-1}) + S_{i+1}^- u_1(x_{i+1})
	- (S_i^++S_i^-)u_1(x_i), \label{d.me1} \\
  \pa_t u_2(x_i) &= T_{i-1}^+ u_2(x_{i-1}) + T_{i+1}^- u_2(x_{i+1})
	- (T_i^++T_i^-)u_2(x_i), \label{d.me2}
\end{align}
where $i\in{\mathbb Z}$. We further suppose that the transition rates $S_i^\pm$
and $T_i^\pm$ depend on the departure cell $i$ and the arrival cells $i\pm 1$:
\begin{align*}
  S_i^\pm &= \sigma_0p_1(u_1(x_i),u_2(x_i))q_1(1-u_1(x_{i\pm 1})-u_2(x_{i\pm 1})), \\
	T_i^\pm &= \sigma_0p_2(u_1(x_i),u_2(x_i))q_2(1-u_1(x_{i\pm 1})-u_2(x_{i\pm 1})),
\end{align*}
where $\sigma_0>0$ is some number.
Abbreviating $p_j(x_i)=p_j(u_1(x_i),u_2(x_i))$ and $q_j(x_i)=q_j(1-u_1(x_i)-u_2(x_i))$,
the master equations \eqref{d.me1}-\eqref{d.me2} become
\begin{align}
  \sigma_0^{-1}\pa_t u_1(x_i) 
	&= p_1(x_{i-1})q_1(x_i)u_1(x_{i-1}) + p_1(x_{i+1})q_1(x_i)u_1(x_{i+1}) \nonumber \\
	&\phantom{xx}{}- p_1(x_i)(q_1(x_{i-1})+q_1(x_{i+1}))u_1(x_i), \label{d.aux1} \\
	\sigma_0^{-1}\pa_t u_2(x_i) 
	&= p_2(x_{i-1})q_2(x_i)u_2(x_{i-1}) + p_2(x_{i+1})q_2(x_i)u_2(x_{i+1}) \nonumber \\
	&\phantom{xx}{}- p_2(x_i)(q_2(x_{i-1})+q_2(x_{i+1}))u_2(x_i). \label{d.aux2}
\end{align}
The functions $p_1(x_i)$ and $p_2(x_i)$ model the tendency of the species 
to leave the cell $i$, whereas $q_1(x_i)$ and $q_2(x_i)$ describe the probability 
to move into the cell $i$. 
The latter functions allow us to model the so-called volume-filling effect.
Indeed, we may interpret $u_1$ and $u_2$ as volume fractions satisfying 
$u_1+u_2\le 1$. Then $1-u_1-u_2$ describes the volume fraction not occupied
by the two species. 
If the $i$th cell is fully occupied, i.e.\ $u_1(x_i)+u_2(x_i)=1$, and 
$q_j(1-u_1(x_i)-u_2(x_i))=q_j(0)=0$, the probability to move into the $i$th cell
is zero.

In order to derive a macroscopic model, we perform a formal Taylor expansion
of $u_j(x_{i\pm 1})$, $p_j(x_{i\pm 1})$, and $q_j(x_{i\pm 1})$ around $u_j(x_i)$
up to second order. Furthermore, we assume a diffusive scaling, i.e.\
$\sigma_0=1/h^2$.
Substituting the expansions into \eqref{d.aux1} and
\eqref{d.aux2} and performing the formal limit $h\to 0$, it follows that
(see \cite{Ost11} for details)
\begin{align*}
  \pa_t u_1 &= \pa_x(a_{11}(u)\pa_x u_1 + a_{12}(u)\pa_x u_2), \\
	\pa_t u_2 &= \pa_x(a_{21}(u)\pa_x u_1 + a_{22}(u)\pa_x u_2),
\end{align*}
where $u=(u_1,u_2)$. The diffusion coefficients are given by
\begin{equation}\label{app.a}
\begin{aligned}
  a_{11}(u) &= p_1(u)q_1(u_3)+u_1(\pa_1 p_1(u)q_1(u_3)+p_1(u)q_1'(u_3)), \\
	a_{12}(u) &= u_1(\pa_2 p_1(u)q_1(u_3)+p_1(u)q_1'(u_3)), \\
	a_{21}(u) &= u_2(\pa_1 p_2(u)q_2(u_3)+p_2(u)q_2'(u_3)), \\
	a_{22}(u) &= p_2(u)q_2(u_3) + u_2(\pa_2 p_2(u)q_2(u_3)+p_2(u)q_2'(u_3)),
\end{aligned}
\end{equation}
where $\pa_i p_j=\pa p_j/\pa u_i$ and $u_3=1-u_1-u_2$. 
In several space dimensions, the argumentation is the same but the computations
are more involved. We obtain equation \eqref{1.eq} with $f=0$ and the
diffusion matrix $A$ with coefficients $a_{ij}$ as above.

It seems very difficult---if not impossible---to explore the entropy structure of 
this equation in full generality. Therefore, we investigated two special cases
in this paper.
First, we assumed that $p_1=p_2=1$, $q_1=q$, and $q_2=\beta q$, where $\beta>0$. 
This corresponds to the volume-filling case \eqref{1.Aq}. 
Second, we have set $q_1=q_2=1$. This gives the matrix \eqref{1.Ap}.


\section{A variant of the Aubin compactness lemma}\label{sec.aubin}

\begin{lemma}\label{lem.comp}
Let $(y_\tau)$, $(z_\tau)$ be sequences which are piecewise constant in time with 
step size $\tau>0$ and which are bounded in
$L^\infty(0,T;L^\infty(\Omega))$. Let $(y_\tau)$ be relatively compact in
$L^2(0,T;L^2(\Omega))$, i.e., up to subsequences which are not relabeled,
$y_\tau\to y$ strongly in $L^2(0,T;L^2(\Omega))$ and
$z_\tau\rightharpoonup^* z$ weakly* in $L^\infty(0,T;L^\infty(\Omega))$
as $\tau\to 0$. Finally, let
\begin{align*}
  \|y_\tau\|_{L^2(0,T;H^1(\Omega))}	&\le C, \\
	\|y_\tau z_\tau\|_{L^2(0,T;H^1(\Omega))} 
	+ \tau^{-1}\|\pi_\tau z_\tau-z_\tau\|_{L^2(\tau,T;(H^1(\Omega))')}
	&\le C,
\end{align*}
where $(\pi_\tau z_\tau)(\cdot,t)=z_\tau(\cdot,t+\tau)$ for $0<t\le T-\tau$.
Then there exists a subsequence (not relabeled) such that 
$y_\tau z_\tau\to yz$ strongly in $L^p(0,T;$ $L^p(\Omega))$ for all $p<\infty$.
\end{lemma}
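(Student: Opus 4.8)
The plan is to prove that $(y_\tau z_\tau)$ is relatively compact in $L^2(0,T;L^2(\Omega))$ and that its only possible limit is $yz$; the convergence in $L^p(0,T;L^p(\Omega))$ for $p<\infty$ then follows for free from the uniform $L^\infty$ bound. First I would identify the weak limit: for every $\psi\in L^2(0,T;L^2(\Omega))$ one has $y_\tau\psi\to y\psi$ strongly in $L^1(0,T;L^1(\Omega))$, so testing the weak-$*$ convergence $z_\tau\rightharpoonup^{*} z$ in $L^\infty(0,T;L^\infty(\Omega))=L^1(0,T;L^1(\Omega))'$ against $y\psi$ and controlling the difference by $\|z_\tau\|_{L^\infty}\|(y_\tau-y)\psi\|_{L^1}$ gives $\int_0^T\int_\Omega y_\tau z_\tau\psi\,dxdt\to\int_0^T\int_\Omega yz\,\psi\,dxdt$; hence $y_\tau z_\tau\rightharpoonup yz$ weakly in $L^2(0,T;L^2(\Omega))$, the sequence being bounded there by the $L^\infty$ assumptions. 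Consequently, once relative compactness in $L^2(0,T;L^2(\Omega))$ is known, every subsequence has a further subsequence converging strongly to the weak limit $yz$, so the whole sequence converges strongly; and since $\|y\|_{L^\infty},\|z\|_{L^\infty}\le C$ by lower semicontinuity, for $p\ge2$ we get $\|y_\tau z_\tau-yz\|_{L^p(0,T;L^p(\Omega))}^p\le\|y_\tau z_\tau-yz\|_{L^\infty}^{\,p-2}\|y_\tau z_\tau-yz\|_{L^2(0,T;L^2(\Omega))}^2\to0$, while $p<2$ follows by H\"older's inequality on the bounded set $\Omega\times(0,T)$.

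The compactness I would obtain by a Simon-type argument in the chain $H^1(\Omega)\hookrightarrow\hookrightarrow L^2(\Omega)\hookrightarrow(W^{1,\infty}(\Omega))'$. The hypotheses give directly that $(y_\tau z_\tau)$ is bounded in $L^2(0,T;H^1(\Omega))\cap L^\infty(0,T;L^2(\Omega))$, which supplies spatial compactness and time equi-integrability. For temporal equicontinuity I would use the discrete product rule
$$
  \pi_\tau(y_\tau z_\tau)-y_\tau z_\tau=\pi_\tau y_\tau\,(\pi_\tau z_\tau-z_\tau)+(\pi_\tau y_\tau-y_\tau)\,z_\tau .
$$
The second term tends to $0$ strongly in $L^2(0,T-\tau;L^2(\Omega))$: writing $\pi_\tau y_\tau-y_\tau=\pi_\tau(y_\tau-y)+(\pi_\tau y-y)+(y-y_\tau)$, the outer two pieces have $L^2$-norm at most $\|y_\tau-y\|_{L^2(0,T;L^2(\Omega))}\to0$ and the middle one vanishes by continuity of translations in $L^2$, while $(z_\tau)$ is bounded in $L^\infty$. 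For the first term, note that for $\phi\in W^{1,\infty}(\Omega)$ one has $\pi_\tau y_\tau\phi\in H^1(\Omega)$ with $\|\pi_\tau y_\tau\phi\|_{H^1(\Omega)}\le C\|\phi\|_{W^{1,\infty}(\Omega)}\big(1+\|\pi_\tau y_\tau\|_{H^1(\Omega)}\big)$, $C$ depending only on $\Omega$ and $\sup_\tau\|y_\tau\|_{L^\infty}$; hence by duality and the Cauchy--Schwarz inequality in time,
$$
  \|\pi_\tau y_\tau(\pi_\tau z_\tau-z_\tau)\|_{L^1(\tau,T;(W^{1,\infty}(\Omega))')}
  \le C\,\|\pi_\tau z_\tau-z_\tau\|_{L^2(\tau,T;(H^1(\Omega))')}\big(1+\|y_\tau\|_{L^2(0,T;H^1(\Omega))}\big)\le C\tau .
$$
Together with the piecewise-constant-in-time structure of $y_\tau z_\tau$, which reduces general shifts by $h$ to the one-step shift, this yields $\sup_\tau\|\pi_h(y_\tau z_\tau)-y_\tau z_\tau\|_{L^1(0,T-h;(W^{1,\infty}(\Omega))')}\to0$ as $h\to0$.

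To conclude I would combine these ingredients with the Ehrling inequality $\|v\|_{L^2(\Omega)}\le\eta\|v\|_{H^1(\Omega)}+C_\eta\|v\|_{(W^{1,\infty}(\Omega))'}$: applying it to the time shifts and using the uniform $L^2(0,T;H^1(\Omega))$ bound turns the $L^1$-in-time equicontinuity in $(W^{1,\infty}(\Omega))'$ into $L^1$-in-time equicontinuity in $L^2(\Omega)$; since moreover $\big\{\int_{t_1}^{t_2}(y_\tau z_\tau)\,dt\big\}$ is bounded in $H^1(\Omega)$, hence relatively compact in $L^2(\Omega)$, the Simon compactness criterion gives relative compactness of $(y_\tau z_\tau)$ in $L^1(0,T;L^2(\Omega))$. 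Extracting (along a subsequence) $y_\tau z_\tau(\cdot,t)\to v(\cdot,t)$ in $L^2(\Omega)$ for a.e.\ $t$ and using the uniform bound $\|y_\tau z_\tau(\cdot,t)\|_{L^2(\Omega)}\le C$ together with dominated convergence in $t$ then upgrades this to strong convergence in $L^2(0,T;L^2(\Omega))$. By the first paragraph, $v=yz$ and the whole sequence converges, which finishes the argument.

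The main obstacle is the first term of the product rule. Since $y_\tau$ is controlled only in $L^2(0,T;H^1(\Omega))\cap L^\infty(0,T;L^\infty(\Omega))$ — not in $L^\infty(0,T;H^1(\Omega))$ — while $\pi_\tau z_\tau-z_\tau$ is controlled only by $\tau$ in $L^2(0,T;(H^1(\Omega))')$, and since the pointwise-in-time product of an $H^1$-function with an $(H^1(\Omega))'$-functional need not lie in $(H^1(\Omega))'$, one is forced to pair against the smaller test space $W^{1,\infty}(\Omega)$, and the Cauchy--Schwarz pairing of the two merely $L^2$-in-time factors then yields only an $L^1$-in-time bound in the weak space $(W^{1,\infty}(\Omega))'$. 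This is exactly why one cannot simply quote the discrete Aubin--Lions lemma of \cite{DrJu12} (which requires a $\tau^{-1}$-scaled discrete time-derivative bound in a single $L^q(0,T;\cdot)$ space), and why the $L^\infty(0,T;L^2(\Omega))$ bound on $(y_\tau z_\tau)$, used to pass from $L^1$- to $L^2$-in-time convergence, is indispensable.
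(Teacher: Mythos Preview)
Your overall strategy is sound and genuinely different from the paper's, but there is a real gap in the time–shift step. You apply the product rule only for the \emph{one–step} shift $\tau$ and then claim that the piecewise–constant structure ``reduces general shifts by $h$ to the one–step shift''. This telescoping is fine for the first term $\pi_\tau y_\tau(\pi_\tau z_\tau-z_\tau)$, which you bound by $C\tau$ in $L^1((W^{1,\infty})')$; summing $\sim h/\tau$ shifted copies indeed gives $O(h)$. But it fails for the second term $(\pi_\tau y_\tau-y_\tau)z_\tau$: you only show this is $o(1)$ as $\tau\to0$, not $O(\tau)$, so after telescoping $\sim h/\tau$ copies the bound can blow up. Hence $\sup_\tau\|\pi_h(y_\tau z_\tau)-y_\tau z_\tau\|_{L^1((W^{1,\infty})')}\to0$ is not established as written.

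The fix is simple: apply the product rule directly for the shift $h$, not for $\tau$. The second term becomes $(\pi_h y_\tau-y_\tau)z_\tau$, and $\sup_\tau\|\pi_h y_\tau-y_\tau\|_{L^2(\Omega_T)}\to0$ as $h\to0$ by the inverse Kolmogorov--Riesz lemma applied to the relatively compact family $(y_\tau)$ (this is exactly how the paper handles its terms $I_{21}$ and $J_3$). For the first term $\pi_h y_\tau(\pi_h z_\tau-z_\tau)$ your duality argument in $(W^{1,\infty})'$ goes through once you know $\|\pi_h z_\tau-z_\tau\|_{L^2(0,T-h;(H^1)')}\to0$ uniformly in $\tau$; \emph{this} is where the discrete structure and the $\tau^{-1}$–bound on the one–step increment are used, and it is a standard estimate (the paper quotes it as Lemma~5 in \cite{CJL13}). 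With this correction the rest of your Ehrling/Simon/DCT chain is fine.

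For comparison, the paper avoids the intermediate space $(W^{1,\infty})'$ altogether by a direct Kolmogorov--Riesz argument in $L^2(\Omega_T)$. The crucial point, corresponding to your first term, is the treatment of $\int y_\tau^2(\pi_k z_\tau-z_\tau)^2$: the paper writes it as $J_1+J_2+J_3$ so that in $J_1$ one factor of $z_\tau$ is absorbed into $y_\tau^2 z_\tau$, which belongs to $L^2(0,T;H^1)$ (because both $y_\tau$ and $y_\tau z_\tau$ lie in $L^\infty\cap L^2(H^1)$), and then the remaining factor $\pi_k z_\tau-z_\tau$ is estimated in $L^2((H^1)')$. Your route trades this algebraic trick for the Ehrling inequality plus an $L^1\!\to\!L^2$ upgrade via the $L^\infty$ bound; both work, but the paper's argument stays in $L^2$ throughout and needs no auxiliary pivot space.
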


Note that the result would follow from the Aubin compactness lemma if $y_\tau$ 
was bounded from below by a positive constant, since in this situation,
it would suffice to apply the Aubin lemma in the version of 
\cite{DrJu12} to infer the strong convergence of (a subsequence of) $(z_\tau)$ in 
$L^2(0,T;L^2(\Omega))$ which, together with the strong convergence of $(y_\tau)$,
would give the result.

\begin{proof}
The proof is inspired from \cite[Section 4.4]{BDPS10} but parts of the proof
are different. The idea is to prove that
$$
  \lim_{(h,k)\to 0}
	\int_{\Omega_T}\big((y_\tau z_\tau)(x+h,t+k)-(y_\tau z_\tau)(x,t)\big)^2 dx\,dt
	= 0 \quad\mbox{uniformly in }\tau>0,
$$
where $\Omega_T=\Omega\times(0,T)$ and $y_\tau(\cdot,t)$, $z_\tau(\cdot,t)$ 
are extended by zero for $T\le t\le T+k$.
Then the result follows from the lemma of Kolmogorov-Riesz \cite[Theorem 4.26]{Bre11}
and the $L^\infty$ boundedness of $y_\tau z_\tau$. We write
\begin{align*}
  \int_{\Omega_T} & \big((y_\tau z_\tau)(x+h,t+k)-(y_\tau z_\tau)(x,t)\big)^2 d(x,t)\\
	&\le 2\int_{\Omega_T}\big((y_\tau z_\tau)(x+h,t+k)-(y_\tau z_\tau)(x,t+k)
	\big)^2 d(x,t) \\
	&\phantom{xx}{}
	+ 2\int_{\Omega_T}\big((y_\tau z_\tau)(x,t+k)-(y_\tau z_\tau)(x,t)\big)^2 d(x,t)
	= I_1 + I_2.
\end{align*}

For the estimate of $I_1$, we integrate over the line segment $[x,x+h]$
and employ a standard extension operator in $L^2$:
$$
  I_1 \le \int_{\Omega_T}\int_0^1 h^2|\na(y_\tau z_\tau)(x+sh,t+k)|^2 ds\,d(x,t)
	\le Ch^2\|\na(y_\tau z_\tau)\|_{L^2(\Omega_T)} \le Ch^2,
$$
where here and in the following, $C>0$ denotes a generic constant.

For the second integral, we have 
\begin{align*}
  I_2 &\le 4\int_{\Omega_T}(y_\tau(x,t+k)-y_\tau(x,t))^2 z_\tau(x,t+k)^2 d(x,t) \\
	&\phantom{xx}{}+ 4\int_{\Omega_T}y_\tau(x,t)^2(z_\tau(x,t+k)-z_\tau(x,t))^2 d(x,t)
	= I_{21} + I_{22}.
\end{align*}
Since $(z_\tau)$ is bounded in $L^\infty(0,T;L^\infty(\Omega))$, 
we can estimate as follows:
\begin{equation}\label{app.y}
  I_{21} \le C\int_{\Omega_T}(y_\tau(x,t+k)-y_\tau(x,t))^2 d(x,t).
\end{equation}
By assumption, $(y_\tau)$ is relatively compact in $L^2(\Omega_T)$.
By the inverse of the lemma of Kolmogorov-Riesz \cite[Exercise 4.34]{Bre11},
the right-hand side of \eqref{app.y}
converges to zero as $k\to 0$ uniformly in $\tau>0$. Furthermore,
\begin{align*}
  I_{22} &= \int_{\Omega_T}y_\tau(x,t)^2 z_\tau(x,t)(z_\tau(x,t)-z_\tau(x,t+k))
	d(x,t) \\
	&\phantom{xx}{}+ \int_{\Omega_T}y_\tau(x,t+k)^2 z_\tau(x,t+k)
	(z_\tau(x,t+k)-z_\tau(x,t))d(x,t) \\
	&\phantom{xx}{}+ \int_{\Omega_T}(y_\tau(x,t)^2-y_\tau(x,t+k)^2)z_\tau(x,t+k)
	(z_\tau(x,t+k)-z_\tau(x,t))d(x,t) \\
	&= J_1+J_2+J_3.
\end{align*}
Using Lemma 5 in \cite{CJL13} and the bounds on $(y_\tau)$, $(z_\tau)$, 
the first integral can be estimated as
\begin{align*}
  J_1 &\le \|y_\tau^2 z_\tau\|_{L^2(0,T;H^1(\Omega))}
	\|\pi_k z_\tau-z_\tau\|_{L^2(0,T-k;H^1(\Omega)')} \\
  &\le Ck^{1/2}\tau^{-1}\|\pi_\tau z_\tau-z_\tau\|_{L^1(0,T-\tau;H^1(\Omega)')}
	\le Ck^{1/2},
\end{align*}
and this converges to zero as $k\to 0$ uniformly in $\tau>0$.
The same conclusion holds for $J_2$.
Finally, in view of the $L^\infty$ boundedness of $(y_\tau)$ and $(z_\tau)$,
the third integral becomes
\begin{align*}
  J_3 &\le \int_{\Omega_T}(y_\tau(x,t+k)+y_\tau(x,t))(y_\tau(x,t+k)-y_\tau(x,t))
	z_\tau(x,t+k) \\
	&\phantom{xx}{}\times(z_\tau(x,t+k)-z_\tau(x,t))d(x,t) \\
	&\le C\int_{\Omega_T}|y_\tau(x,t+k)-y_\tau(x,t)|d(x,t).
\end{align*}
Because of the relative compactness of $(y_\tau)$ in $L^2$, 
the right-hand side converges to zero uniformly in $\tau>0$. 
This finishes the proof.
\end{proof}


\end{appendix}


\end{document}